\documentclass{amsart}
 
\usepackage[latin1]{inputenc}
\usepackage[english]{babel}
\usepackage{amssymb,amsmath,amsthm}
\usepackage{amsfonts}
\usepackage{rotating}
\usepackage{url}
\input{xy}\xyoption{curve}

\theoremstyle{plain}
\newtheorem{theorem}{Theorem}

\newtheorem*{main}{Associativity Lemma}
\newtheorem{lemma}{Lemma}
\newtheorem{proposition}{Proposition}
\newtheorem{fact}{Fact}

\newcommand{\interval}[2]{\ensuremath[{#1},{#2}]}

\begin{document}
\title[Lattice of Equational Classes of Boolean Functions]
{On the Lattice of Equational Classes of Boolean Functions and Its Closed Intervals}

\author{Miguel Couceiro}
\address{Department of Mathematics, Statistics and Philosophy\\
University of Tampere\\ 
Kanslerinrine 1, 33014 Tampere, Finland}
\email{Miguel.Couceiro@uta.fi}
\thanks{The work of the author was partially supported by the Graduate School in Mathematical
 Logic MALJA, and by grant \#28139 from the Academy of Finland}

\keywords{Classes of operations, class composition, variable substitutions,
 partially ordered monoids, idempotent classes, functional equations,
equational classes, lattice of equational classes, closed intervals,
 Boolean functions, clones,
Post Lattice}
%\date{Submitted: October , 2005. Revised: March, 2007}

\begin{abstract} Let $A$ be a finite set with $\lvert A\rvert\geq 2$. 
The composition of two classes $\mathcal{I}$ and $\mathcal{J}$ of 
operations on $A$, is defined as the set of all composites 
$f(g_1, \ldots, g_n)$ with $f \in \mathcal{I}$ and $g_1, \ldots, g_n \in \mathcal{J}$. This
binary operation gives a monoid structure to the set ${\bf E}_A$ 
of all equational classes of operations on $A$. 

The set ${\bf E}_A$ of equational classes of operations on $A$ also constitutes a complete distributive 
lattice under intersection and union. Clones of operations, i.e. classes containing all projections
 and idempotent under class composition, also form a lattice which is strictly contained in ${\bf E}_A$. 
In the Boolean case $\lvert A\rvert= 2$, the lattice
${\bf E}_A$ contains uncountably many ($2^{\aleph _0}$) equational classes, but only 
countably many of them are clones. 

 The aim of this paper is to provide a better understanding of this uncountable
 lattice of equational classes of Boolean functions, 
by analyzing its ``closed" intervals $\interval{\mathcal{C}_1}{\mathcal{C}_2}$, 
for idempotent classes ${\mathcal{C}_1}$ and ${\mathcal{C}_2}$.
For $\lvert A\rvert= 2$, we give a complete classification of all closed
 intervals $\interval{\mathcal{C}_1}{\mathcal{C}_2}$ in terms of their size,
 and provide a simple, necessary and sufficient condition characterizing the uncountable closed intervals of ${\bf E}_A$.       
\end{abstract}

\maketitle

\section{Introduction}

The characterization of the classes of operations on a set $A$,
 definable by means of functional equations, was first obtained in the Boolean case $A=\{0,1\}$
by Ekin, Foldes, Hammer and Hellerstein in \cite{EFHH}, and in a different framework by Pippenger \cite{Pi2}.
This result was extended in \cite{CF2} to arbitrary non-empty underlying sets $A$, where it was shown that these equational
classes are essentially those classes $\mathcal{K}$ satisfying $\mathcal{K}\mathcal{P}_ A=\mathcal{K}$, 
where $\mathcal{P}_ A$ denotes the class containing only projections on $A$. 
From this characterization it follows that the set ${\bf E}_A$ of equational classes on $A$
 constitutes a complete distributive lattice which properly contains the set of all clones on $A$.
In fact, the classification of operations into equational classes is 
much finer than the classification into clones. For example, in the Boolean case $\lvert A\rvert =2$, 
there are uncountably many equational classes on $A$ (see e.g. \cite{Pi2}), but only 
countably many of them are clones (see \cite{Post}). 

Thus it seems very hard to achieve a complete description of the lattice 
${\bf E}_A$, even in the case $\lvert A\rvert =2$.
Nevertheless, the subset of those equational classes which are idempotent under class composition, 
induces a subdivision of ${\bf E}_A$ into sublattices 
$\interval{\mathcal{C}_1}{\mathcal{C}_2}$ which are in addition closed under class composition.

In this paper, we study the closed intervals of ${\bf E}_A$. The distribution of the equational classes
 into these intervals is not uniform: some intervals are countable, while others are uncountable. Thus 
it is natural to ask which are the uncountable intervals of ${\bf E}_A$.

We answer this question for $A=\mathbb{B}=\{0,1\}$.
In the next section, we provide definitions and terminology as well as some preliminary results used
in the sequel. In Section 3, we introduce the lattice ${\bf E}_{A} $ of equational classes of operations on $A$,
and present some facts and general results concerning this lattice and its 
 intervals $\interval{\mathcal{C}_1}{\mathcal{C}_2}$, 
for idempotent classes ${\mathcal{C}_1}$ and ${\mathcal{C}_2}$.
In particular, we verify that an interval 
$\interval{\mathcal{C}_1}{\mathcal{C}_2}$ is uncountable if and only if there is an infinite antichain
of operations in $\mathcal{C}_2\setminus {\mathcal{C}_1}$ with respect to the 
pre-order $\preceq _{\bf V}$ defined on Section 2.

In Section 4, we focus on the lattice ${\bf E}_{\mathbb{B}} $
of equational classes of Boolean functions.
In view of the above characterization, we determine which intervals of ${\bf E}_{\mathbb{B}} $
contain only finite antichains, and provide infinite antichains of Boolean functions 
for the remaining closed intervals (Subsection 4.2). The classification 
of the closed intervals of ${\bf E}_{\mathbb{B}} $
in terms of size, is then presented in Subsection 4.3.
Using this classification, we derive in Subsection 4.4 a simpler, 
necessary and sufficient condition characterizing the uncountable closed intervals of ${\bf E}_{\mathbb{B}} $.  
 
\section{Basic notions and preliminary results}

Throughout the paper, let $A$ be a finite set with $\lvert A\rvert\geq 2$. 
An \emph{operation on $A$} is a map
$f: A^n \rightarrow A$, where $n$ is a positive integer called the \emph{arity} of $f$.
If $A=\mathbb{B}=\{0,1\}$, then $f$ is called a \emph{Boolean function}. By a \emph{class on $A$} we simply mean a subset  $\mathcal{I}\subseteq \underset{n \geq 1}{\bigcup} A^{A^n}$.

The \emph{essential arity} of an $n$-ary operation $f: A^n \rightarrow A$ 
is the cardinality of the set of indices 
 \begin{displaymath}
\begin{array}{l}
I=\{1\leq i\leq n: \textrm{ there are }a_1,\ldots ,a_i,b_i,a_{i+1},\ldots ,a_n\textrm{ with }a_i\not=b_i
\textrm{ and }\\
f(a_1,\ldots ,a_{i-1},a_i,a_{i+1},\ldots ,a_n)\not=f(a_1,\ldots ,a_{i-1},b_i,a_{i+1},\ldots ,a_n)\}
\end{array}
\end{displaymath}
For each $i\in I$, the $i$th variable of $f$ is said to be \emph{essential}. A variable $x_i$ of $f$
is called \emph{dummy} if $i\not\in I$. By definition it follows that
constant operations have only dummy variables. Operations of essential arity at most 1
 are usually called \emph{quasi-monadic}. An operation $f: A^n \rightarrow A$ is said to be 
\emph{idempotent}, if $f(x_1,\ldots ,x_1)=x_1$.  

For any maps $g_1,\ldots ,g_n:A^m\rightarrow A$ and $f: A^n \rightarrow A$, 
 their \emph{composition} is defined as the map 
$f(g_1,\ldots ,g_n):A^m\rightarrow A$ given by 
$f(g_1,\ldots ,g_n)({\bf a})=f(g_1({\bf a}),\ldots ,g_n({\bf a}))$, for every $ {\bf a}\in A^m$.
An $n$-ary operation $f:A^n\rightarrow A$ is said to be \emph{associative} if for any $2n-1$-ary projections
$p_k$, $1\leq k\leq 2n-1$, and every pair of indices $1\leq i<j\leq n$, we have 
\begin{displaymath}
\begin{array}{l}
f(p_1,\ldots ,p_{i-1},f(p_i,\ldots ,p_{i+n-1}),p_{i+n},\ldots ,p_{ 2n-1})=
\\ =f(p_1,\ldots ,p_{j-1},f(p_j,\ldots ,p_{j+n-1}),p_{j+n},\ldots ,p_{ 2n-1})
\end{array}
\end{displaymath}
Note that if $f$ is associative of essential arity $n\geq 2$, then it does not have dummy variables.
Also, by definition it follows that each member of the family $(f^k)_{k\geq 0}$ given by the recursion
\begin{enumerate}
\item $f^0=f(x_1,\ldots ,x_1)$ and $f^1=f$,
\item $f^k=f^{k-1}(x_1,\ldots ,x_{(k-1)(n-1)},f(x_{(k-1)(n-1)+1},\ldots ,x_{k(n-1)+1}))$ 
\end{enumerate}
is also associative. This notion of associativity for $n$-ary operations plays a fundamental role  
in the generalization of groups to $n$-groups (polyadic goups). For an early reference 
see e.g. \cite{Post2}, and for a bibliographic survey see \cite{G}. 

If $\mathcal{I},\mathcal{J}\subseteq \underset{n \geq 1}{\bigcup}  A^{A^n}$, then
 the \emph{class composition} $\mathcal{I}\mathcal{J}$ is defined as the set
 \begin{displaymath}
 \mathcal{I}\mathcal{J}=\{f(g_1,\ldots ,g_n)\mid n,m\geq 1, f\textrm{ $n$-ary in $\mathcal{I}$, }g_1,\ldots ,g_n\textrm{ $m$-ary in $\mathcal{J}$} \}. 
\end{displaymath}
If $\mathcal{I}$ is a singleton, $\mathcal{I}=\{f\}$, then we write $f\mathcal{J}$ instead of $\{f\}\mathcal{J}$.   
Note that class composition is monotone, i.e. if $\mathcal{I}_1\subseteq \mathcal{I}_2$ and
 $\mathcal{J}_1\subseteq \mathcal{J}_2$, then 
$\mathcal{I}_1\mathcal{J}_1\subseteq \mathcal{I}_2\mathcal{J}_2$. 
 
Let $\mathcal{P}_ A$ denote the class containing only projections on $A$.
An $m$-ary operation $g$ on $A$ 
is said to be obtained from an $n$-ary operation $f$ on $A$
 by \emph{simple variable substitution}, denoted $g\preceq _{\bf V}f$,
if there are $m$-ary projections $p_1,\ldots ,p_n\in \mathcal{P}_ A$ such that 
$g=f(p_1,\ldots ,p_n)$. In other words,  
  \begin{displaymath}
g\preceq _{\bf V} f \quad \text{if and only if }\quad  g\mathcal{P}_ {A}\subseteq f\mathcal{P}_ {A}.
\end{displaymath}
Thus $\preceq _{\bf V}$ constitutes a pre-order (reflexive and transitive)
on $\underset{n \geq 1}{\bigcup}  A^{A^n}$. 
If $g\preceq _{\bf V}f$ and $f\preceq _{\bf V}g$, then $g$ and $f$ are said to be 
\emph{equivalent}. Note that if $g\preceq _{\bf V}f$ but $f\not\preceq _{\bf V}g$, then the essential arity of $g$ is less
than the essential arity of $f$, and hence, every descending chain with 
respect to $\preceq _{\bf V}$ must be finite.

If $g\not\preceq _{\bf V}f$ and $f\not\preceq _{\bf V}g$, then $g$ and $f$ 
are said to be \emph{incomparable}. By an \emph{antichain} of operations we simply mean a set of pairwise
incomparable operations with respect to $\preceq _{\bf V}$. 

We say that an operation $g$ is \emph{quasi-associative} if there is 
an associative operation $f$ such that $g\preceq _{\bf V}f$. Clearly, every associative operation
is also quasi-associative, but there are quasi-associative operations which are not associative.
By definition we have:

\begin{proposition}\label{quasi} Let $f$ be an associative operation. If $g\preceq _{\bf V}f$ is not associative,
 then it is obtained from $f$ by addition of inessential variables. 
\end{proposition}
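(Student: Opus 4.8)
The plan is to unwind the two notions occurring in the statement and then argue by contraposition. Writing $f$ as an $n$-ary operation, the hypothesis $g\preceq_{\bf V}f$ means exactly that $g=f(p_1,\dots,p_n)$ for some $m$-ary projections $p_1,\dots,p_n$; if $p_k$ projects onto the $j_k$-th coordinate, then $g(x_1,\dots,x_m)=f(x_{j_1},\dots,x_{j_n})$, so the entire substitution is encoded by the map $k\mapsto j_k$. First I would dispose of the degenerate case in which $f$ has essential arity at most $1$: there $f$ is quasi-monadic, every $g\preceq_{\bf V}f$ is quasi-monadic as well, and the defining equations of associativity are vacuous, so there is nothing to prove. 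Hence I may assume $f$ has essential arity $n\ge 2$, in which case, as already observed before the statement, $f$ has no dummy variables.

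With this normalization, the map $k\mapsto j_k$ can affect the variables of $f$ in only two qualitatively different ways: either every variable of $g$ is essential (the substitution merely permutes and/or identifies the $n$ essential arguments of $f$, leaving no unused coordinate), or some coordinate of $g$ is inessential (an \emph{inessential variable} has been introduced). The heart of the plan is to show that in the first situation $g$ is necessarily associative; contraposing then yields that a non-associative $g$ falls into the second situation, i.e. that it is obtained from $f$ by addition of inessential variables. To carry this out I would substitute $g=f(x_{j_1},\dots,x_{j_n})$ into each equation of the associativity condition for $g$ and rewrite both sides as iterated composites of $f$. Invoking the associativity of $f$ together with the fact, recorded just before the statement, that every member of the family $(f^{k})_{k\ge 0}$ is again associative, each such iterated composite should collapse to a single canonical $f$-expression, so that the two sides of the equation coincide and $g$ is associative.

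The step I expect to be the genuine obstacle is precisely this collapse: verifying that, after substitution, the two parenthesizations appearing in the associativity law for $g$ reduce to the \emph{same} $f$-tree rather than to expressions differing by a repeated or a missing coordinate. This is where the bookkeeping of the polyadic identity concentrates, and it is exactly where the assumption that all variables of $g$ are essential is used, since that assumption is what forces corresponding argument slots on the two sides to receive the same coordinate. I would organize this verification around the recursion defining $f^{k}$, reducing the general equation to the canonical iterate and thereby transferring associativity from $f$ to $g$. Once this collapse is secured, the contrapositive delivers the conclusion that any non-associative $g\preceq_{\bf V}f$ must carry an inessential variable, i.e. is $f$ with dummy variables adjoined; essentially everything then rests on that single collapse computation.
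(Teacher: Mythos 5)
Your proposal reduces the proposition to the claim that whenever $g=f(x_{j_1},\ldots,x_{j_n})$ with $f$ associative and every variable of $g$ essential (so the substitution only permutes and/or identifies the arguments of $f$), the associativity of $f$ formally forces that of $g$ by collapsing iterated composites. That claim --- which you yourself flag as the unproven ``genuine obstacle'' --- is not merely unfinished bookkeeping: it is false. Take $A=\mathbb{Z}_3$ and the associative operation $f(x_1,x_2,x_3)=x_1+x_2+x_3 \pmod 3$, and identify the first two variables: $g(x_1,x_2)=f(x_1,x_1,x_2)=2x_1+x_2$. Both variables of $g$ are essential, yet
\[
g(g(x_1,x_2),x_3)=x_1+2x_2+x_3 \;\neq\; 2x_1+2x_2+x_3=g(x_1,g(x_2,x_3)).
\]
Permutations fail as well: over the three-element monoid $\{1,a,b\}$ with $xy=x$ for $x,y\in\{a,b\}$, the operation $g(x_1,x_2,x_3)=x_1x_3x_2$ is a variable-permutation of the associative $x_1x_2x_3$, has all variables essential, and is not associative (setting $x_1=x_2=x_4=1$ the two sides become $x_3x_5$ and $x_5x_3$, which differ at $x_3=a$, $x_5=b$). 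The underlying reason your collapse cannot be carried out is that polyadic associativity of $f$ only allows the inner occurrence of $f$ to slide along the argument list; it never permits reordering or merging argument slots, which is exactly what permutations and identifications introduce.

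Consequently no argument of your kind --- one that uses nothing about $A$ beyond the formal associativity identity --- can exist: the two examples above are non-associative simple variable substitutions of associative operations in which no inessential variable has been added, so they falsify the statement itself over general finite $A$ with $\lvert A\rvert\geq 3$. The instance the paper actually uses is the Boolean one, and there the statement does hold, but for reasons specific to $\mathbb{B}$: the associative Boolean functions of essential arity at least $2$ are exactly $x_1\wedge\cdots\wedge x_n$, $x_1\vee\cdots\vee x_n$, $x_1+\cdots+x_n$ and $x_1+\cdots+x_n+1$, all totally symmetric, and one checks directly that every identification of variables in these has an associative essential part. Any correct proof must invoke such Boolean-specific information; note that the paper itself offers no proof at all (the proposition is prefaced by ``By definition we have''), which glosses over precisely this point. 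Two smaller defects: your degenerate case is wrong as stated, since the associativity equations are not vacuous for quasi-monadic operations of arity $\geq 2$ (the ternary second projection is quasi-monadic and non-associative); and the Boolean example $g(x_1,x_2)=f(x_1,x_1,x_2)=\bar{x}_2$ with $f=x_1+x_2+x_3+1$ shows that even over $\mathbb{B}$ the conclusion can only be read as ``$g$ has inessential variables, i.e.\ is an associative function with dummies adjoined,'' not literally ``$g$ is $f$ with dummies adjoined'' --- a distinction your contraposition also elides.
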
 
  We refer to operations which are not quasi-associative as \emph{non-associative}.  

 A class $\mathcal{K}\subseteq \underset{n \geq 1}{\bigcup} A^{A^n}$ of operations on $A$,
is said to be \emph{closed under simple variable substitutions}
if each operation obtained from a operation $f$ in $\mathcal{K}$ by simple variable substitution
 is also in $\mathcal{K}$. In other words, the class $\mathcal{K}$
 is closed under simple variable substitutions if and only
if $ \mathcal{K}\mathcal{P}_ A =\cup _{f\in \mathcal{K}}f\mathcal{P}_ A \subseteq  \mathcal{K}$. Clearly, this condition
is equivalent to $\mathcal{K}\mathcal{P}_ A =  \mathcal{K}$.
We denote by ${\bf V}_{A} $ the set of all classes of operations on $A$ 
closed under simple variable substitutions. 

\bigskip

Recall that a \emph{monoid} with universe $M$ is an algebraic structure $\langle M,\cdot  \rangle$ with an associative
 operation $\cdot :M^2\rightarrow M$, and an identity element, usually denoted by $1_M$. In other words,
a monoid is a semigroup with an identity element.
 If $\leq $ is a partial order on $M$, and if for every $x,y,z,w\in M$ the following condition holds
 \begin{displaymath}
\text{if}\quad x\leq y, \quad \text{then }\quad  z\cdot x\cdot w\leq z\cdot y\cdot w, 
 \end{displaymath}
then $\langle M,\cdot  \rangle$ is called a \emph{partially ordered monoid}. 

\begin{theorem}
The set $ {\bf V}_{A} $ constitutes a partially ordered monoid with respect to class composition, with 
 $\mathcal{P}_ A$ as its identity. 
\end{theorem}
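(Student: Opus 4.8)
The plan is to take set inclusion $\subseteq$ as the partial order on ${\bf V}_A$ and to verify four requirements in turn: that ${\bf V}_A$ is closed under class composition, that class composition is associative on ${\bf V}_A$, that $\mathcal{P}_A$ belongs to ${\bf V}_A$ and acts as a two-sided identity, and finally that $\subseteq$ is compatible with composition. The last point is essentially free: it is exactly the monotonicity of class composition already recorded above, applied once on each side, so that $\mathcal{X}\subseteq\mathcal{Y}$ yields $\mathcal{Z}\mathcal{X}\mathcal{W}\subseteq\mathcal{Z}\mathcal{Y}\mathcal{W}$. The identity laws are also quick: $\mathcal{I}\mathcal{P}_A=\mathcal{I}$ is precisely the defining closure condition for $\mathcal{I}\in{\bf V}_A$, while $\mathcal{P}_A\mathcal{I}=\mathcal{I}$ holds for \emph{every} class, since substituting $m$-ary operations $g_1,\ldots,g_n$ into an $n$-ary projection $p^n_i$ simply returns $g_i$ (and every $g\in\mathcal{I}$ is recovered via the unary identity projection). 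Moreover $\mathcal{P}_A$ itself lies in ${\bf V}_A$, because a simple variable substitution of a projection is again a projection.

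The crux is associativity, $(\mathcal{I}\mathcal{J})\mathcal{K}=\mathcal{I}(\mathcal{J}\mathcal{K})$, which I would split into two inclusions. The inclusion $(\mathcal{I}\mathcal{J})\mathcal{K}\subseteq\mathcal{I}(\mathcal{J}\mathcal{K})$ needs no closure hypothesis: a typical element is $\bigl(f(g_1,\ldots,g_n)\bigr)(k_1,\ldots,k_N)$ with $f\in\mathcal{I}$, the $g_i\in\mathcal{J}$ all $N$-ary, and the $k_j\in\mathcal{K}$ all $p$-ary, and the pointwise definition of composition of maps gives $\bigl(f(g_1,\ldots,g_n)\bigr)(k_1,\ldots,k_N)=f\bigl(g_1(k_1,\ldots,k_N),\ldots,g_n(k_1,\ldots,k_N)\bigr)$, whose inner terms $g_i(k_1,\ldots,k_N)$ lie in $\mathcal{J}\mathcal{K}$; hence the whole expression lies in $\mathcal{I}(\mathcal{J}\mathcal{K})$.

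The reverse inclusion is where closure is genuinely used, and I expect it to be the main obstacle because of the arity bookkeeping. A typical element of $\mathcal{I}(\mathcal{J}\mathcal{K})$ has the form $f(h_1,\ldots,h_n)$ with $f\in\mathcal{I}$ of arity $n$ and each $h_i\in\mathcal{J}\mathcal{K}$ of some common arity $p$, say $h_i=g_i(k_{i,1},\ldots,k_{i,m_i})$ with $g_i\in\mathcal{J}$ of arity $m_i$ and each $k_{i,j}\in\mathcal{K}$ of arity $p$. The difficulty is that the $g_i$ have different arities $m_i$ and feed on different inner tuples, whereas membership in $(\mathcal{I}\mathcal{J})\mathcal{K}$ requires a \emph{single} inner tuple of $\mathcal{K}$-operations feeding operations of one common arity in $\mathcal{I}\mathcal{J}$. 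I would resolve this by concatenating all the inner operations into one list $k_1,\ldots,k_N$ with $N=\sum_i m_i$, setting $s_i=\sum_{l<i}m_l$ so that block $i$ occupies positions $s_i+1,\ldots,s_i+m_i$ and $k_{s_i+j}=k_{i,j}$, and then padding each $g_i$ with dummy variables to the $N$-ary operation $\tilde{g}_i=g_i(p^N_{s_i+1},\ldots,p^N_{s_i+m_i})$. Since $\tilde{g}_i\preceq_{\bf V}g_i$ and $\mathcal{J}$ is closed under simple variable substitutions, we have $\tilde{g}_i\in\mathcal{J}$, and by construction $\tilde{g}_i(k_1,\ldots,k_N)=g_i(k_{i,1},\ldots,k_{i,m_i})=h_i$. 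Thus $f(\tilde{g}_1,\ldots,\tilde{g}_n)\in\mathcal{I}\mathcal{J}$ is $N$-ary and its composition with $k_1,\ldots,k_N$ recovers $f(h_1,\ldots,h_n)$, placing it in $(\mathcal{I}\mathcal{J})\mathcal{K}$. Notice that only closure of the \emph{middle} class $\mathcal{J}$ is invoked here.

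Finally, closure of ${\bf V}_A$ under composition follows cleanly once associativity is in hand: for $\mathcal{I},\mathcal{J}\in{\bf V}_A$ we obtain $(\mathcal{I}\mathcal{J})\mathcal{P}_A=\mathcal{I}(\mathcal{J}\mathcal{P}_A)=\mathcal{I}\mathcal{J}$, using $\mathcal{J}\mathcal{P}_A=\mathcal{J}$, so $\mathcal{I}\mathcal{J}$ is again closed under simple variable substitutions and hence lies in ${\bf V}_A$. Assembling the identity laws, associativity, closure, and order-compatibility then shows that $\langle{\bf V}_A,\cdot\rangle$ is a partially ordered monoid with identity $\mathcal{P}_A$.
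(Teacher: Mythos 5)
Your proposal is correct, and its skeleton is the same as the paper's: identity laws for $\mathcal{P}_A$, associativity, closure of ${\bf V}_A$ under composition obtained as $(\mathcal{I}\mathcal{J})\mathcal{P}_A=\mathcal{I}(\mathcal{J}\mathcal{P}_A)=\mathcal{I}\mathcal{J}$, and order-compatibility from monotonicity. The one substantive difference is how associativity is handled: the paper does not prove it at all, but invokes the Associativity Lemma imported from \cite{CF1,CF2}, namely that $(\mathcal{I}\mathcal{J})\mathcal{K}\subseteq\mathcal{I}(\mathcal{J}\mathcal{K})$ always holds and that equality holds when the \emph{middle} class $\mathcal{J}$ is closed under simple variable substitutions. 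Your proof reconstructs exactly this lemma from scratch: the easy inclusion by unfolding pointwise composition, and the reverse inclusion by concatenating the inner $\mathcal{K}$-tuples and padding each $g_i$ with dummy variables via projections $\tilde{g}_i=g_i(p^N_{s_i+1},\ldots,p^N_{s_i+m_i})$, which is where closure of $\mathcal{J}$ is used --- your remark that only the middle class needs to be closed is precisely the hypothesis of part (ii) of the cited lemma. So your argument is self-contained where the paper delegates to a reference; the cost is length, the benefit is that the arity-bookkeeping step, which is the real mathematical content behind the theorem, is actually exhibited rather than assumed.
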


To prove Theorem 2, we need the following

\begin{main}\emph{(In \cite{CF1,CF2}:)}
Let $A$ be a finite set with $\lvert A\rvert\geq 2$, and let $\mathcal{I}$, $\mathcal{J}$, and   
$\mathcal{K}$ be classes of operations on $A$. The following hold:
\begin{itemize}
\item[(i)] $(\mathcal{I}\mathcal{J})\mathcal{K}\subseteq \mathcal{I}(\mathcal{J}\mathcal{K})$;
\item[(ii)] If $\mathcal{J}$ is closed under simple variable substitutions,
then $(\mathcal{I}\mathcal{J})\mathcal{K}=\mathcal{I}(\mathcal{J}\mathcal{K})$.
\end{itemize}
\end{main}

\begin{proof}[Proof of Theorem 1.]
By the characterization of the equational classes given in Theorem 1, 
 and using the Associativity Lemma, it follows that 
class composition is associative on ${\bf V}_{A}$. Clearly, for every
$\mathcal{K}\in {\bf V}_{A} $, $\mathcal{P}_ A\mathcal{K}=\mathcal{K}$ 
and $\mathcal{K}\mathcal{P}_ A=\mathcal{K}$. Since the members of
${\bf V}_{A}$ are closed under variable substitutions, again by making use of the Associativity Lemma it follows
that $(\mathcal{K}_1\mathcal{K}_2)\mathcal{P}_ A=\mathcal{K}_1(\mathcal{K}_2\mathcal{P}_ A)=\mathcal{K}_1\mathcal{K}_2$.
Furthermore, class composition is order-preserving,
 and the proof of Theorem 2 is complete. 
\end{proof}

An \emph{idempotent} of a monoid $M$ is an element $e$ of M such that
$e\cdot e = e$.

\begin{fact}
The idempotents of $ {\bf V}_{A} $ containing $\mathcal{P}_ A$ are exactly the \emph{clones} on $A$.
 Moreover, $\mathcal{P}_ A$ is the smallest clone on $A$ and each clone is closed 
under simple variable substitutions.
\end{fact}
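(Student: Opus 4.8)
The plan is to unwind both the classical definition of a clone and the monoid-theoretic notion of an idempotent of ${\bf V}_A$, and to observe that once translated through class composition they express the very same closure conditions. Recall that a clone on $A$ is a class $\mathcal{C}$ that contains all projections, i.e. $\mathcal{P}_A\subseteq\mathcal{C}$, and is closed under composition, i.e. $f(g_1,\ldots,g_n)\in\mathcal{C}$ whenever $f\in\mathcal{C}$ is $n$-ary and $g_1,\ldots,g_n\in\mathcal{C}$ are $m$-ary. Since class composition is defined by exactly these composites, the latter condition reads simply $\mathcal{C}\mathcal{C}\subseteq\mathcal{C}$. Thus the whole argument amounts to reconciling two formulations of the same inclusions.

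First I would show that every clone $\mathcal{C}$ is an idempotent of ${\bf V}_A$ containing $\mathcal{P}_A$. The inclusion $\mathcal{P}_A\subseteq\mathcal{C}$ holds by definition, and closure under composition gives $\mathcal{C}\mathcal{C}\subseteq\mathcal{C}$. For the reverse inclusions I invoke the identity $f=f(p_1,\ldots,p_n)$, valid for every $n$-ary $f$ and the $n$-ary projections $p_1,\ldots,p_n$. As these projections lie in $\mathcal{P}_A$ and, by hypothesis, in $\mathcal{C}$, this witnesses both $f\in\mathcal{C}\mathcal{P}_A$ and $f\in\mathcal{C}\mathcal{C}$, whence $\mathcal{C}\subseteq\mathcal{C}\mathcal{P}_A$ and $\mathcal{C}\subseteq\mathcal{C}\mathcal{C}$. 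Combining with the forward inclusions yields $\mathcal{C}\mathcal{P}_A=\mathcal{C}$ (so $\mathcal{C}\in{\bf V}_A$, which is precisely the assertion that each clone is closed under simple variable substitutions) and $\mathcal{C}\mathcal{C}=\mathcal{C}$ (so $\mathcal{C}$ is idempotent). Conversely, if $\mathcal{C}$ is an idempotent of ${\bf V}_A$ with $\mathcal{P}_A\subseteq\mathcal{C}$, then it contains all projections and $\mathcal{C}\mathcal{C}=\mathcal{C}$ gives $\mathcal{C}\mathcal{C}\subseteq\mathcal{C}$, i.e. closure under composition; hence $\mathcal{C}$ is a clone. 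For the remaining claim, $\mathcal{P}_A$ is itself a clone, since the composite of projections is again a projection, giving $\mathcal{P}_A\mathcal{P}_A=\mathcal{P}_A$, and every clone contains $\mathcal{P}_A$ by definition, so it is the smallest one.

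The content here is essentially definitional, so no single step is a genuine obstacle; the only point requiring care is the reverse inclusion $\mathcal{C}\subseteq\mathcal{C}\mathcal{C}$, where the hypothesis $\mathcal{P}_A\subseteq\mathcal{C}$ is indispensable. This is exactly what fails for idempotents of ${\bf V}_A$ that omit some projections, which explains why the containment of $\mathcal{P}_A$ cannot be dropped from the statement.
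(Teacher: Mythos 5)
Your proof is correct, and it matches the paper's treatment: the paper states this as a Fact with no proof at all, regarding it as immediate from the definitions, and your argument is precisely the routine definitional unwinding being taken for granted. In particular you correctly isolate the one step with any content, namely $\mathcal{C}\subseteq\mathcal{C}\mathcal{C}$ and $\mathcal{C}\subseteq\mathcal{C}\mathcal{P}_A$ via the identity $f=f(p_1,\ldots,p_n)$, which is where the hypothesis $\mathcal{P}_A\subseteq\mathcal{C}$ is used.
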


\begin{proposition}
 If $\mathcal{C}_1,\mathcal{C}_2\in {\bf V}_{A} $ are idempotents such that $\mathcal{C}_1\subseteq \mathcal{C}_2$, then 
 \begin{displaymath}
\interval{\mathcal{C}_1}{\mathcal{C}_2} =\{ \mathcal{K}\in {\bf V}_{A}: 
\mathcal{C}_1\subseteq \mathcal{K}\subseteq \mathcal{C}_2\}
 \end{displaymath}
is a semigroup.
\end{proposition}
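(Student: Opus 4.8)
The plan is to show that the interval is closed under class composition; associativity then comes for free, since associativity of the monoid operation is an identity valid throughout ${\bf V}_A$ (by Theorem 2) and is automatically inherited by any subset closed under that operation. Thus $\interval{\mathcal{C}_1}{\mathcal{C}_2}$ will be realized as a subsemigroup of the partially ordered monoid ${\bf V}_A$, and the whole task reduces to verifying closure.

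First I would fix arbitrary $\mathcal{K}_1,\mathcal{K}_2\in \interval{\mathcal{C}_1}{\mathcal{C}_2}$ and observe that both belong to ${\bf V}_A$, so by Theorem 2 (closure of the monoid under its operation) their composite $\mathcal{K}_1\mathcal{K}_2$ again lies in ${\bf V}_A$. It then remains only to check the two bounds $\mathcal{C}_1\subseteq \mathcal{K}_1\mathcal{K}_2\subseteq \mathcal{C}_2$, and this is where the idempotence hypotheses on the endpoints enter.

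For the upper bound I would invoke monotonicity of class composition together with idempotence of $\mathcal{C}_2$: from $\mathcal{K}_1\subseteq \mathcal{C}_2$ and $\mathcal{K}_2\subseteq \mathcal{C}_2$ one obtains $\mathcal{K}_1\mathcal{K}_2\subseteq \mathcal{C}_2\mathcal{C}_2=\mathcal{C}_2$. Symmetrically, for the lower bound, from $\mathcal{C}_1\subseteq \mathcal{K}_1$ and $\mathcal{C}_1\subseteq \mathcal{K}_2$ and idempotence of $\mathcal{C}_1$ one obtains $\mathcal{C}_1=\mathcal{C}_1\mathcal{C}_1\subseteq \mathcal{K}_1\mathcal{K}_2$. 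Hence $\mathcal{K}_1\mathcal{K}_2\in \interval{\mathcal{C}_1}{\mathcal{C}_2}$, which establishes closure and completes the argument.

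I do not anticipate any genuine obstacle: the proof rests entirely on the two ingredients already available, namely monotonicity of class composition and idempotence of the endpoints, with associativity supplied by the ambient monoid. The one point worth flagging is that the statement asserts only a semigroup and not a monoid. The monoid identity $\mathcal{P}_A$ of ${\bf V}_A$ need not belong to $\interval{\mathcal{C}_1}{\mathcal{C}_2}$, and $\mathcal{C}_1$ will in general fail to act as a two-sided identity on the interval, so no identity element can be claimed; this is consistent with the weaker conclusion stated in the proposition.
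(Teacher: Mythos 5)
Your proof is correct and follows essentially the same route as the paper: the paper's one-line argument is precisely the closure fact $\mathcal{C}_1=\mathcal{C}_1\mathcal{C}_1\subseteq\mathcal{K}_1\mathcal{K}_2\subseteq\mathcal{C}_2\mathcal{C}_2=\mathcal{C}_2$ via monotonicity and idempotence, with associativity inherited from the monoid ${\bf V}_A$. Your remark that $\mathcal{P}_A$ need not lie in the interval, so only a semigroup (not a monoid) can be claimed, is a correct and worthwhile clarification.
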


\begin{proof}
The proof of Proposition 1 follows from the fact that if 
$\mathcal{C}_1\subseteq \mathcal{K}_1,\mathcal{K}_2\subseteq \mathcal{C}_2$,
for idempotents $\mathcal{C}_1\subseteq \mathcal{C}_2$,
then $\mathcal{C}_1\subseteq \mathcal{K}_1\mathcal{K}_2\subseteq \mathcal{C}_2$.  
\end{proof}
Note that not all intervals $\interval{\mathcal{K}_1}{\mathcal{K}_2} $, for arbitrary 
${\mathcal{K}_1}, {\mathcal{K}_2}\in {\bf V}_A$ are closed under class composition.  
We refer to the sets $\interval{\mathcal{C}_1}{\mathcal{C}_2} $, for idempotents
${\mathcal{C}_1}$ and ${\mathcal{C}_2}$, as 
\emph{closed intervals}. If $\mathcal{C}_1$ 
is covered by $\mathcal{C}_2$, i.e., if for every idempotent $\mathcal{C}$ such that
 $\mathcal{C}_1\subseteq \mathcal{C}\subseteq \mathcal{C}_2$ we have $\mathcal{C}=\mathcal{C}_1$ or 
$\mathcal{C}=\mathcal{C}_2$, then we say that the interval 
$\interval{\mathcal{C}_1}{\mathcal{C}_2}$ is \emph{minimal}. 

\section{The lattice of equational classes of operations on $A$}

A \emph{functional equation} (for operations on $A$) is a formal expression
 \begin{displaymath}
\begin{array}{l}
h_1( {\bf f} (g_1({\bf v}_1,\ldots ,{\bf v}_p)),\ldots ,{\bf f} (g_m({\bf v}_1,\ldots ,{\bf v}_p)))=
\\=h_2( {\bf f} ({g'}_1({\bf v}_1,\ldots ,{\bf v}_p)),\ldots ,{\bf f} ({g'}_t({\bf v}_1,\ldots ,{\bf v}_p))) \qquad (1)
\end{array}
\end{displaymath}
where $m,t,p\geq 1$, $h_1: A^m \rightarrow A$, $h_2: A^t \rightarrow A$, each $g_i$ and ${g'}_j$ is a map
$A^p\rightarrow A$, the ${\bf v}_1,\ldots ,{\bf v}_p$ are $p$ distinct symbols called \emph{vector variables}, and 
$\bf f$ is a distinct symbol called \emph{function symbol}. 

For $n\geq 1$, we denote by $\bf n$ the set ${\bf n}=\{1,\ldots ,n\}$, so that an $n$-vector ($n$-tuple) $v$ in $A^n$
 is a map $v:{\bf n}\rightarrow A$.
For an $n$-ary operation on $A$, $f: A^n \rightarrow A$, we say that $f$ \emph{satisfies} 
the equation $(1)$ if, for all ${v}_1,\ldots ,{v}_p\in A^n$, we have
 \begin{displaymath}
\begin{array}{l}
h_1( {f} (g_1({v}_1,\ldots ,{v}_p)),\ldots ,{f} (g_m({v}_1,\ldots ,{ v}_p)))=
\\=h_2( {f} ({g'}_1({v}_1,\ldots ,{v}_p)),\ldots ,{f} ({g'}_t({v}_1,\ldots ,{v}_p))) 
\end{array}
\end{displaymath}
 A class $\mathcal{K}$ of operations on $A$ is said to be \emph{defined},
 or \emph{definable}, by a set $\mathcal{E}$ of functional equations, if $\mathcal{K}$ is the class of all 
those operations which satisfy every member of $\mathcal{E}$. We say that a class $\mathcal{K}$ is 
\emph{equational} if it is definable by some set of functional equations.  
We denote by ${\bf E}_{A} $ the set of all equational classes of operations on $A$. 
The following result was first obtained by Ekin, Foldes, Hammer and Hellerstein \cite{EFHH} 
for the Boolean case 
$A=\mathbb{B}=\{0,1\}$.

\begin{theorem}\label{theorem:Equational}\emph{(In \cite{CF2}:)} The equational classes of 
operations on $A$ are exactly those classes
 that are closed under simple variable substitutions. 
\end{theorem}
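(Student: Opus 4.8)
The plan is to prove the two inclusions separately, the forward one being routine and the converse being the substantial part. For the easy direction I would show that every equational class is closed under simple variable substitutions, i.e. that satisfaction of a functional equation of the form $(1)$ is preserved under passing to minors. Suppose $f$ satisfies $(1)$ and let $f' = f(\pi_{i_1},\ldots,\pi_{i_n})$ be obtained by simple variable substitution, where $\pi_{i_k}$ are $m$-ary projections. Given any vector variables $v_1,\ldots,v_p \in A^m$ and any inner map $G$ among the $g_i,g'_j$, the coordinatewise action gives $\pi_{i_k}(G(v_1,\ldots,v_p)) = G(v_1(i_k),\ldots,v_p(i_k))$, so if I define reindexed vectors $v'_s \in A^n$ by $v'_s(k)=v_s(i_k)$, then $f'(G(v_1,\ldots,v_p)) = f(G(v'_1,\ldots,v'_p))$. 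Thus evaluating $(1)$ for $f'$ at $(v_1,\ldots,v_p)$ reduces termwise to evaluating $(1)$ for $f$ at $(v'_1,\ldots,v'_p)$, and the latter equality holds by hypothesis. Hence $f'$ satisfies $(1)$, and every equational class lies in ${\bf V}_A$.

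For the converse I would argue by a separation principle. Given $\mathcal{K}$ closed under simple variable substitutions, let $\mathcal{E}_{\mathcal{K}}$ be the set of \emph{all} functional equations satisfied by every member of $\mathcal{K}$, and let $\mathcal{K}^\ast$ be the class defined by $\mathcal{E}_{\mathcal{K}}$. By construction $\mathcal{K}\subseteq\mathcal{K}^\ast$ and $\mathcal{K}^\ast$ is equational, so it suffices to prove $\mathcal{K}^\ast\subseteq\mathcal{K}$; equivalently, for each $h\notin\mathcal{K}$ I must exhibit a single functional equation that is satisfied by all of $\mathcal{K}$ yet fails for $h$. This is the heart of the matter, and it is where the minor-closure hypothesis $\mathcal{K}\mathcal{P}_A=\mathcal{K}$ must be used in an essential way.

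To build the separating equation for an $n$-ary $h\notin\mathcal{K}$, I would exploit that the inner maps $g_i,g'_j$ in $(1)$ can be arbitrary, not merely projections: acting coordinatewise, they let the equation refer to the \emph{input structure} of the tested operation. Concretely, I would enumerate $A^n=\{a_1,\ldots,a_N\}$ with $N=\lvert A\rvert^n$, choose a distinguished family of vector variables whose rows run through all of $A^n$, and select inner maps that enumerate the argument tuples $a_1,\ldots,a_N$, so that at the distinguished assignment the function symbol is evaluated at precisely the points of the value table of an $n$-ary operation. The fixed operations $h_1$ and $h_2$ are then chosen so that the two sides of the equation disagree exactly when the minor read off in this way coincides with $h$. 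Because $\mathcal{K}$ is closed under simple variable substitutions, every minor extracted from a member of $\mathcal{K}$ again lies in $\mathcal{K}$, and hence differs from $h$; this forces the two sides to agree for all of $\mathcal{K}$ at \emph{every} assignment and every arity, whereas plugging $h$ itself into the distinguished assignment makes them differ, so the equation separates $h$ from $\mathcal{K}$.

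The main obstacle I anticipate is precisely the explicit design of this equation: getting the arities $m,t,p$, the enumeration, the inner maps $g_i,g'_j$, and the comparator operations $h_1,h_2$ to fit together so that (a) validity on $\mathcal{K}$ follows cleanly from minor-closure across all arities simultaneously, and (b) failure for $h$ is witnessed by one concrete assignment. The reindexing identity $f'(G(v_1,\ldots,v_p))=f(G(v'_1,\ldots,v'_p))$ established in the easy direction will be the bookkeeping tool that propagates the constraint consistently between different arities, and I expect the verification of (a) to be the most delicate step, since it is there that the hypothesis $\mathcal{K}\mathcal{P}_A=\mathcal{K}$ is genuinely consumed.
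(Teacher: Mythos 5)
First, a point of order: the paper does not prove this theorem at all --- it is imported from \cite{CF2} (the label ``In \cite{CF2}'' is the whole of the paper's justification) --- so there is no in-paper argument to compare yours against, and your proposal must be judged on its own merits. Your easy direction is correct and complete: the reindexing identity $f'(G(v_1,\ldots,v_p))=f(G(v'_1,\ldots,v'_p))$ with $v'_s(k)=v_s(i_k)$ holds simultaneously for every inner map $G$, since $v'_s$ depends only on $v_s$ and the chosen projections, and this does show that satisfaction of any equation of form $(1)$ passes to all functions obtained by simple variable substitution. The separation architecture for the converse (define $\mathcal{E}_{\mathcal{K}}$, let $\mathcal{K}^\ast$ be the class it defines, and separate each $h\notin\mathcal{K}$ from $\mathcal{K}$ by a single equation) is also the right one, and is essentially the scheme used in \cite{CF2} and in Pippenger's Galois theory for minors.

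The genuine gap is that the separating equation is never constructed: the claim ``this forces the two sides to agree for all of $\mathcal{K}$ at every assignment and every arity'' is asserted, not proved, and it is false for a generic choice of inner maps meeting only your distinguished-assignment requirement. Write $A^n=\{a_1,\ldots,a_N\}$, let $M$ be the $N\times n$ matrix with rows $a_j$ and columns $c_1,\ldots,c_n\in A^N$. When a $k$-ary $g$ is tested at an assignment whose rows are $w_1,\ldots,w_k\in A^p$, the tuple $\bigl(g(g_1(\vec{v})),\ldots,g(g_N(\vec{v}))\bigr)$ equals $g$ applied componentwise to the columns $u_l=(g_1(w_l),\ldots,g_N(w_l))$; this is the value table of a \emph{minor} of $g$ --- which is what your validity argument needs --- precisely when each $u_l$ is one of the columns $c_1,\ldots,c_n$. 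So the joint map $G=(g_1,\ldots,g_N):A^p\to A^N$ must have image \emph{contained in} $\{c_1,\ldots,c_n\}$ and meeting all of them; if, say, the image contains the all-zero vector, then for $g\in\mathcal{K}$ the read-off tuple is the table of the \emph{constant} function with value $g(0,\ldots,0)$, which is not a simple-variable-substitution minor of $g$, and if $h$ happens to be that constant function the equation fails on $\mathcal{K}$ and the separation collapses. The fix is short but it is the actual content of the theorem: fix a surjection $\sigma:A^p\to\{1,\ldots,n\}$ (possible with $p=n$ since $\lvert A\rvert\geq 2$), put $g_j=a_j\circ\sigma$, take $m=t=N$, $g'_j=g_j$, $h_2\equiv 0$, and $h_1$ equal to $0$ everywhere except at the value table of $h$, where it is $1$. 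Then every read-off tuple for $g\in\mathcal{K}$ is the table of $g(\pi_{\sigma(w_1)},\ldots,\pi_{\sigma(w_k)})\in\mathcal{K}$, hence differs from the table of $h$ and both sides are $0$; while $h$ itself fails the equation at any assignment with $\sigma(w_l)=l$. Since you explicitly deferred exactly this design as ``the main obstacle,'' what you have is a correct plan with a correct first half, not a proof.
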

In other words, the sets ${\bf E}_{A} $ and ${\bf V}_{A} $ are exactly the same. 
By definition of class composition, it follows that 
  \begin{displaymath}
\begin{array}{l}
(\mathcal{K}_1\cup \mathcal{K}_2)\mathcal{P}_ A=
\mathcal{K}_1\mathcal{P}_ A\cup \mathcal{K}_2\mathcal{P}_ A\textrm{  and  }  
 (\mathcal{K}_1\cap \mathcal{K}_2)\mathcal{P}_ A=
\mathcal{K}_1\mathcal{P}_ A\cap \mathcal{K}_2\mathcal{P}_ A  
\end{array}
\end{displaymath}for every 
$\mathcal{K}_1,\mathcal{K}_2\subseteq \underset{n \geq 1}{\bigcup} A^{A^n}$. From these facts and 
using Theorem~\ref{theorem:Equational}, we obtain: 

\begin{fact}  
The set ${\bf E}_{A} $ of all equational classes of 
operations on $A$ constitutes a complete distributive lattice under intersection and union, 
with $\emptyset $ and $\underset{n \geq 1}{\bigcup} A^{A^n}$ as minimal and maximal elements, respectively. 
 \end{fact}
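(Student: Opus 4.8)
The plan is to lean on Theorem~\ref{theorem:Equational}, which identifies ${\bf E}_A$ with ${\bf V}_A$, so that membership reduces to the internal characterization $\mathcal{K}\mathcal{P}_A=\mathcal{K}$, i.e. closure under simple variable substitutions. Under this description, ${\bf E}_A$ is simply a family of subsets of the fixed ambient set $\bigcup_{n\geq 1} A^{A^n}$, and the entire statement amounts to showing that this family is closed under arbitrary intersections and arbitrary unions, with its lattice operations coinciding with ordinary set-theoretic $\cap$ and $\cup$.

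First I would verify closure under arbitrary intersection. Given any family $(\mathcal{K}_i)_{i\in I}$ in ${\bf E}_A$ and any $f\in\bigcap_i\mathcal{K}_i$, every $g\preceq_{\bf V} f$ belongs to each $\mathcal{K}_i$ (since each is closed under simple variable substitutions), hence $g\in\bigcap_i\mathcal{K}_i$; so the intersection is again closed. The union is analogous and even simpler: if $f\in\bigcup_i\mathcal{K}_i$ then $f\in\mathcal{K}_j$ for some $j$, whence any $g\preceq_{\bf V} f$ lies in $\mathcal{K}_j\subseteq\bigcup_i\mathcal{K}_i$. Equivalently, class composition distributes over unions in its left argument, giving $(\bigcup_i\mathcal{K}_i)\mathcal{P}_A=\bigcup_i\mathcal{K}_i\mathcal{P}_A=\bigcup_i\mathcal{K}_i$. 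This upgrades the two displayed binary identities to arbitrary families.

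With both closure properties established, ${\bf E}_A$ is a subfamily of the power set $\mathcal{P}(\bigcup_{n\geq 1} A^{A^n})$ closed under arbitrary $\cap$ and $\cup$, hence a complete lattice whose meets and joins are exactly set intersection and set union. Distributivity is then inherited for free, since the power set lattice is distributive and the operations on ${\bf E}_A$ agree with those of the ambient power set. Finally I would identify the extremal elements: $\emptyset$ is equational because $\emptyset\,\mathcal{P}_A=\emptyset$, and it is contained in every class, so it is least; dually, $\bigcup_{n\geq 1} A^{A^n}$ is trivially closed under simple variable substitutions and contains every class, so it is greatest.

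I do not expect a genuine obstacle: the whole assertion follows mechanically once Theorem~\ref{theorem:Equational} is invoked. The only point requiring a moment's care is confirming that the displayed identities, stated for pairs, really do extend to arbitrary families — which they do, since the closure test is carried out one operation at a time and the witness $g\preceq_{\bf V} f$ depends only on the single member $f$. Everything else is routine bookkeeping about a closure system sitting inside a distributive power set lattice.
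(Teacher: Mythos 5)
Your proof is correct and takes essentially the same route as the paper: both invoke Theorem~\ref{theorem:Equational} to identify ${\bf E}_A$ with ${\bf V}_A$ and then verify that closure under simple variable substitutions is preserved by intersections and unions, so that ${\bf E}_A$ sits inside the power set of $\bigcup_{n\geq 1}A^{A^n}$ as a complete sublattice. Your only departure is a welcome one in rigor: you check closure element-by-element for \emph{arbitrary} families (which completeness actually requires), whereas the paper records only the binary identities and leaves that extension implicit.
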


The set ${\bf E}_{A} $ constitutes a closure system, 
and thus each equational class can be described by a set of ``generators".
In fact, by making use of Theorem~\ref{theorem:Equational}, we see that the smallest equational class on $A$ containing a set  
$\mathcal{K}\subseteq \underset{n \geq 1}{\bigcup} A^{A^n}$ is the class composition $\mathcal{K}\mathcal{P}_ A$.
The equational class $\mathcal{K}\mathcal{P}_ A$ is said to be \emph{generated} by $\mathcal{K}$. If 
$\mathcal{K}$ is a finite set of operations, then we say that 
$\mathcal{K}\mathcal{P}_ A$ is \emph{finitely generated}.
 \begin{theorem}
\label{theorem:monadic} Let $A$ be a finite set, and let $\mathcal{C}$ be an idempotent of ${\bf E}_{A} $. 
Then $\mathcal{C}$ is a finitely generated equational class if and only if $\mathcal{C}$ contains
 only quasi-monadic operations. Furthermore, only finitely many equational classes in ${\bf E}_A$
 are finitely generated. \end{theorem}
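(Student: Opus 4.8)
The plan is to recast \emph{finite generation} as a bound on essential arities, dispatch the easy implication and the final counting statement directly, and concentrate the real work in a single growth statement about class composition.

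First I would record the following reformulation, valid for any equational class $\mathcal{K}$: since $\mathcal{K}=\mathcal{K}\mathcal{P}_ A=\bigcup_{f\in\mathcal{K}}f\mathcal{P}_ A$ and each $f\mathcal{P}_ A$ is exactly the $\preceq_{\bf V}$-downset of $f$, which meets only finitely many $\preceq_{\bf V}$-classes (a fixed $f$ admits only finitely many simple variable substitutions up to addition of dummy variables), a finite generating set forces $\mathcal{K}$ to meet only finitely many $\preceq_{\bf V}$-classes; conversely, one representative from each such class is a finite generating set. As for each fixed essential arity $e$ there are only finitely many operations $A^{e}\to A$ depending on all $e$ arguments, meeting finitely many $\preceq_{\bf V}$-classes is the same as having a uniform bound on the essential arities of the members of $\mathcal{K}$. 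This immediately yields the implication $(\Leftarrow)$: if $\mathcal{C}$ contains only quasi-monadic operations, then every member has essential arity at most $1$, so $\mathcal{C}$ is finitely generated (idempotence is not needed here). Moreover, each quasi-monadic operation has the form $\varphi\circ p$ for a unary map $\varphi\colon A\to A$ and a projection $p$, and two such are $\preceq_{\bf V}$-equivalent precisely when their maps $\varphi$ agree; hence there are exactly $|A|^{|A|}$ such $\preceq_{\bf V}$-classes and at most $2^{|A|^{|A|}}$ equational classes consisting solely of quasi-monadic operations. Once the equivalence of the theorem is established, the finitely generated idempotents are exactly the idempotent ones among these, hence finitely many, which proves the final assertion.

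For the implication $(\Rightarrow)$ I would argue by contraposition: assuming the idempotent $\mathcal{C}$ contains an operation $g$ of essential arity at least $2$, I will produce members of $\mathcal{C}$ of arbitrarily large essential arity, contradicting the bound coming from finite generation. Since $\mathcal{C}\mathcal{C}\subseteq\mathcal{C}$ and $\mathcal{C}\mathcal{P}_ A\subseteq\mathcal{C}$, the class $\mathcal{C}$ contains the closure of $\{g\}$ under class composition and simple variable substitution; deleting dummy variables, I may assume $g$ is $n$-ary with all $n\geq 2$ variables essential. Everything then reduces to the \emph{growth lemma}: the composite $G=g(g^{(1)},\dots,g^{(n)})$, where $g^{(j)}$ is a copy of $g$ on its own block of $n$ fresh variables, has essential arity strictly greater than $n$. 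Granting this, no finite value $\geq 2$ can be the maximal essential arity occurring in $\mathcal{C}$, so the essential arities in $\mathcal{C}$ are unbounded, contradicting finite generation and completing the proof.

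The hard part will be the growth lemma, and the obstacle is already visible: an argument of the outer $g$ receives only values in the image $B=g(A^{n})$, so a tuple witnessing the essentiality of that argument is of use in $G$ only if it is realizable with inner outputs, i.e. lies in $B$; a priori an essential variable of a block need not remain essential in $G$. I would overcome this by passing to the eventual image $B^{\ast}$, the stable value of the descending chain $A\supseteq B\supseteq g(B^{n})\supseteq\cdots$, on which the restriction of $g$ is onto $B^{\ast}$. Over $B^{\ast}$ each inner copy, fed from $B^{\ast}$, sweeps the whole core, so all witnesses are realizable and a direct count of surviving essential coordinates of $G$ gives strictly more than $n$; iterating then forces unbounded essential arity, which lifts back to $\mathcal{C}$. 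Verifying this surjective reduction, and treating the degenerate case in which $g$ collapses to essential arity at most $1$ on $B^{\ast}$, is the technical core of the argument.
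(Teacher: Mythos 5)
Your first two paragraphs are correct and match the paper: the reformulation of finite generation as a uniform bound on essential arities, the $(\Leftarrow)$ direction, and the finiteness count are all fine, and your plan for $(\Rightarrow)$ — force unbounded essential arity by composing $g$ with itself inside $\mathcal{C}$ — is exactly the paper's plan, which takes a generator $f_N$ of maximal essential arity $N$ and asserts that $f_N(x_1,\ldots ,x_{N-1},f_N(x_N,\ldots ,x_{2N-1}))$ belongs to $\mathcal{C}$ and has essential arity $2N-1$. In two respects you are actually more careful than the paper: your composite $G=g(g^{(1)},\ldots ,g^{(n)})$ on disjoint blocks manifestly lies in $\mathcal{C}\mathcal{C}=\mathcal{C}$, whereas the paper's composite is a \emph{partial} composition, which presupposes projections that an idempotent of ${\bf E}_A$ need not contain (cf. Fact 1); and you notice that growth of essential arity is a genuine issue, because the outer $g$ is only ever fed values from the image of $g$ — a point on which the paper offers no argument at all.

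The gap is that the step you defer as ``the technical core'' — the growth lemma, and specifically your degenerate case — cannot be closed, because the statement itself fails once $\lvert A\rvert \geq 3$. Let $A=\{0,1,2\}$ and define $g\colon A^2\to A$ by $g(x,0)=0$ and $g(x,1)=1$ for all $x$, and $g(0,2)=g(2,2)=0$, $g(1,2)=1$. Both variables of $g$ are essential, the image of $g$ is $\{0,1\}$, and $g(v,w)=w$ whenever $w\in \{0,1\}$. Every member of $\mathcal{K}=g\mathcal{P}_A$ has the form ${\bf x}\mapsto g(x_a,x_b)$ and takes values in $\{0,1\}$; hence every class composite $F(h_1,\ldots ,h_m)$ with $F=g(x_a,x_b)\in \mathcal{K}$ and $h_1,\ldots ,h_m\in \mathcal{K}$ equals $g(h_a,h_b)=h_b\in \mathcal{K}$, and conversely every $h\in \mathcal{K}$ equals $g(h,h)$. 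Thus $\mathcal{K}\mathcal{K}=\mathcal{K}$: this $\mathcal{K}$ is an idempotent, finitely generated equational class all of whose members have essential arity at most $2$, yet it contains the essentially binary $g$. It realizes exactly your degenerate case ($B^{\ast}=\{0,1\}$, on which $g$ is the second projection); your growth lemma fails for it, since $g(g(x_1,x_2),g(x_3,x_4))=g(x_3,x_4)$; and the paper's unproved claim fails for it too, since $g(x_1,g(x_2,x_3))=g(x_2,x_3)$ has essential arity $2$, not $3$. The theorem (and both arguments) survive only for $A=\mathbb{B}$ — the only case the paper uses later: there every non-constant function is surjective, so $B^{\ast}=\mathbb{B}$, degeneracy would force $g$ to be quasi-monadic, and your realizability argument does go through (any two distinct inner values are $0$ and $1$, so outer witnesses are always reproducible, giving $G$ essential arity $n^2$). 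So your diagnosis of where the difficulty lies is exactly right, but as a proof of the stated theorem the proposal is incomplete, and no completion exists without restricting to $\lvert A\rvert =2$ or to classes containing the projections.
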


\begin{proof}
Note that for each finite $A$, there are only finitely many quasi-monadic operations (up to equivalence),
 and thus the equational classes containing only quasi-monadic operations 
must be finitely generated.
In particular, the equational classes on $A$ which are idempotent and containing only quasi-monadic 
operations are finitely generated.

To see that these are indeed the only equational classes on $A$ which are idempotent 
and finitely generated, let $\mathcal{C}$ be an idempotent equational class containing an operation $f$ 
of essential arity $n>1$. Now, if $\mathcal{C}$ were finitely generated, then 
there would be an integer $N\geq n$, and an $N$-ary generator $f_N$ of essential arity $N$, 
such that every operation in $\mathcal{C}$ has essential arity 
at most $N$. But the $2N-1$-ary operation 
 \begin{displaymath}
 f'_N(x_1,\ldots ,x_{2N-1})= f_N(x_1,\ldots ,x_{N-1},f_N(x_{N},\ldots ,x_{2N-1}))
\end{displaymath}
has essential arity equal to $2N-1$ and since $\mathcal{C}$ is idempotent, it must be in $\mathcal{C}$,
which constitutes a contradiction. Thus indeed $\mathcal{C}$ cannot be finitely generated.

The last claim follows from the fact that there are only finitely many pairwise incomparable
 quasi-monadic operations on a finite set.
\end{proof}

By reasoning as in the proof of Theorem~\ref{theorem:monadic},
 it is not difficult to verify that the following also holds:

   \begin{theorem}
\label{theorem:finite intervals} Let $A$ be a finite set, and let 
$\mathcal{C}_1,\mathcal{C}_2\in {\bf E}_A$ be two idempotent classes such that 
$\mathcal{C}_1\subseteq \mathcal{C}_2$. Then the interval
$\interval{\mathcal{C}_1}{\mathcal{C}_2} \subseteq {\bf E}_A$ is finite 
if and only if $\mathcal{C}_2\setminus \mathcal{C}_1$ contains only quasi-monadic operations. 
 \end{theorem}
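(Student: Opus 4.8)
The plan is to mimic the structure of the proof of Theorem~\ref{theorem:monadic}, establishing the two implications separately. Throughout, I would rely on two facts already available in the excerpt: first, that on a finite set $A$ there are only finitely many quasi-monadic operations up to $\preceq_{\bf V}$-equivalence; and second, that an equational class is exactly a class closed under simple variable substitutions (Theorem~\ref{theorem:Equational}), so a member $\mathcal{K}$ of the interval is entirely determined by which operations of $\mathcal{C}_2$ it contains, subject to being downward closed under $\preceq_{\bf V}$ and sandwiched between $\mathcal{C}_1$ and $\mathcal{C}_2$.

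For the direction asserting finiteness, suppose $\mathcal{C}_2\setminus\mathcal{C}_1$ contains only quasi-monadic operations. Each $\mathcal{K}\in\interval{\mathcal{C}_1}{\mathcal{C}_2}$ is of the form $\mathcal{C}_1\cup S$ where $S\subseteq\mathcal{C}_2\setminus\mathcal{C}_1$ consists of quasi-monadic operations and $\mathcal{K}$ must itself be equational, i.e.\ closed under $\preceq_{\bf V}$. Since there are only finitely many quasi-monadic operations up to equivalence, and since adding an operation forces adding its whole $\preceq_{\bf V}$-equivalence class, there are only finitely many possible such closed sets $S$; hence only finitely many classes $\mathcal{K}$ lie in the interval. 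I would phrase this as: the map $\mathcal{K}\mapsto \mathcal{K}\setminus\mathcal{C}_1$ injects $\interval{\mathcal{C}_1}{\mathcal{C}_2}$ into the (finite) set of $\preceq_{\bf V}$-closed subsets of the finitely many quasi-monadic operations in $\mathcal{C}_2\setminus\mathcal{C}_1$.

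For the contrapositive of the other direction, suppose $\mathcal{C}_2\setminus\mathcal{C}_1$ contains some operation $f$ of essential arity $n>1$. The goal is to exhibit infinitely many distinct equational classes between $\mathcal{C}_1$ and $\mathcal{C}_2$. Following the device in Theorem~\ref{theorem:monadic}, I would iterate $f$ into itself to produce, for each $k$, an operation $f_k$ of strictly increasing essential arity (as in the recursion $(f^k)_{k\geq 0}$ defined in Section~2, or the explicit $f'_N$ construction). Because essential arity strictly increases, the operations $f_k$ are pairwise $\preceq_{\bf V}$-incomparable in the sense that no later one is obtained from an earlier by variable substitution; more to the point, they generate strictly increasing data. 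For each $k$ I would form the equational class $\mathcal{K}_k=\mathcal{C}_1\cup (\{f_1,\ldots,f_k\}\mathcal{P}_A)$ and argue these are pairwise distinct and all contained in $\mathcal{C}_2$ (using idempotence of $\mathcal{C}_2$ to guarantee every iterate $f_k$ remains in $\mathcal{C}_2$, and the fact that $f_k\notin\mathcal{C}_1$ since $\mathcal{C}_1$ is itself closed and the $f_k$ have unbounded essential arity while they lie outside $\mathcal{C}_1$).

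The main obstacle I anticipate is verifying that the iterates $f_k$ genuinely produce \emph{distinct} classes and, in particular, that they do not collapse back into $\mathcal{C}_1$. The subtlety is that $f$ need not be associative, so the iteration scheme must be chosen so that essential arity provably strictly increases at each step and so that no $f_k$ is $\preceq_{\bf V}$-below any combination already forced into $\mathcal{C}_1$; here I would lean on the remark from Section~2 that $g\preceq_{\bf V}f$ with $f\not\preceq_{\bf V}g$ forces a strict drop in essential arity, which bounds from below the essential arity needed to recover $f_k$ and thereby separates the classes $\mathcal{K}_k$. Pinning down this separation carefully is exactly the content deferred to the later antichain constructions of Section~4, and the phrase ``by reasoning as in the proof of Theorem~\ref{theorem:monadic}'' signals that the present proof is meant to be the short, essential-arity argument rather than the full antichain machinery.
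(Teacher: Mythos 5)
Your first direction is fine and matches the paper's intent: every $\mathcal{K}$ in the interval decomposes as $\mathcal{C}_1$ together with a union of full $\preceq _{\bf V}$-equivalence classes of quasi-monadic operations (if $h\in\mathcal{K}\setminus\mathcal{C}_1$ and $h'$ is equivalent to $h$, then $h'\in\mathcal{K}\setminus\mathcal{C}_1$ because both $\mathcal{K}$ and $\mathcal{C}_1$ are closed under simple variable substitutions), and there are only finitely many such equivalence classes on a finite $A$. The problem is the converse direction, and the obstacle you yourself flag -- that the iterates might ``collapse back into $\mathcal{C}_1$'' -- is a genuine gap which neither of your two remedies closes. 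Concretely, take $\mathcal{C}_1=T_0$ and $\mathcal{C}_2=\Omega$ (both clones, hence idempotent) and $f=x_1+x_2+x_3+{\bf 1}\in\mathcal{C}_2\setminus\mathcal{C}_1$. Then already the first iterate satisfies
\begin{displaymath}
f(x_1,x_2,f(x_3,x_4,x_5))=x_1+x_2+x_3+x_4+x_5\in T_0=\mathcal{C}_1 ,
\end{displaymath}
so with your definition $\mathcal{K}_2=\mathcal{K}_1$: the classes really can coincide. (In this example the odd iterates survive, so the theorem is not threatened, but the step ``each $f_k\notin\mathcal{C}_1$'' is false as stated; your parenthetical justification of it -- ``since \ldots{} they lie outside $\mathcal{C}_1$'' -- is circular.) Your first patch, the remark that a strict $\preceq _{\bf V}$-descent forces a strict drop in essential arity, only separates $f_{k+1}$ from $\{f_1,\ldots,f_k\}\mathcal{P}_A$; it cannot keep $f_{k+1}$ out of $\mathcal{C}_1$, because an idempotent class such as $T_0$ contains functions of every essential arity. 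Your second patch, deferring the separation to the antichains of Section 4, misreads the paper: those constructions are specific to $A=\mathbb{B}$ and serve the uncountability results (Theorems~\ref{theorem:characterization} and~\ref{theorem:classification}), whereas Theorem~\ref{theorem:finite intervals} concerns every finite $A$ and needs only infinitude, not an antichain.

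What the second direction actually requires is that $\mathcal{C}_2\setminus\mathcal{C}_1$ contain operations of unbounded essential arity; granted that, the classes $\mathcal{D}_k=\mathcal{C}_1\cup\{g\in\mathcal{C}_2:\mathrm{ess}(g)\leq k\}$ (each equational, since simple variable substitution cannot increase essential arity) form an infinite strictly increasing chain in $\interval{\mathcal{C}_1}{\mathcal{C}_2}$, and one could equally run your generator construction along a subfamily of iterates avoiding $\mathcal{C}_1$. But producing, for arbitrarily large arities, some member of $\mathcal{C}_2$ that stays outside $\mathcal{C}_1$ is exactly the point where ``reasoning as in Theorem~\ref{theorem:monadic}'' is not routine: in Theorem~\ref{theorem:monadic} the iterate only has to exceed a finite bound on essential arity, which the arity count achieves, whereas here it has to escape the infinite idempotent class $\mathcal{C}_1$, and the example above shows that iteration alone need not do this. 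To be fair, the paper's own one-sentence proof glosses over the same point; but since you explicitly identified the obstacle and then resolved it with an argument that does not apply and a deferral that does not exist, the proof as proposed is incomplete at its essential step.
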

 
The following theorem provides a necessary and sufficient for a closed interval to contain 
uncountably many equational classes. 

\begin{theorem}
\label{theorem:characterization}
Let $\mathcal{C}_1$ and $\mathcal{C}_2$ be two idempotent classes such that $\mathcal{C}_1\subseteq \mathcal{C}_2$.
Then there are uncountably many ($2^{\aleph _0}$) equational classes in 
$\interval{\mathcal{C}_1}{\mathcal{C}_2} $
if and only if $\mathcal{C}_2\setminus \mathcal{C}_1$ contains an infinite (countable) antichain of operations.
\end{theorem}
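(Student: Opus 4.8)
The plan is to recast the statement as a counting problem for down-sets of a well-founded poset, and then settle both implications by elementary cardinality arguments. First I would record the guiding observation that, by Theorem~\ref{theorem:Equational}, an equational class is exactly a subset closed under simple variable substitutions, i.e. a \emph{down-set} for the pre-order $\preceq_{\bf V}$: if $g\preceq_{\bf V}f$ and $f\in\mathcal{K}$, then $g\in f\mathcal{P}_A\subseteq\mathcal{K}\mathcal{P}_A=\mathcal{K}$. Since $\mathcal{C}_1$ and $\mathcal{C}_2$ are themselves such down-sets, the classes $\mathcal{K}$ with $\mathcal{C}_1\subseteq\mathcal{K}\subseteq\mathcal{C}_2$ correspond bijectively, via $\mathcal{K}\mapsto\mathcal{K}\setminus\mathcal{C}_1$ and $D\mapsto\mathcal{C}_1\cup D$, to the down-sets of the poset $P$ obtained from $\mathcal{C}_2\setminus\mathcal{C}_1$ by quotienting $\preceq_{\bf V}$ by its associated equivalence. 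The only thing to verify here is that $\mathcal{C}_1\cup D$ is again closed under $\preceq_{\bf V}$, and this uses only that $\mathcal{C}_1$ and $\mathcal{C}_2$ are down-sets. Thus it suffices to count the down-sets of $P$.

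Two features of $P$ drive the argument: it is countable, since on a finite set there are only countably many operations; and it is well-founded, since a strict step down in $\preceq_{\bf V}$ strictly decreases essential arity, so there is no infinite $\preceq_{\bf V}$-descending chain, as already noted in Section 2. The key lemma I would isolate is the standard correspondence for well-founded posets: the map sending a down-set $D$ to the set of minimal elements of its complement $P\setminus D$ is a bijection from down-sets of $P$ onto antichains of $P$, with inverse $B\mapsto P\setminus{\uparrow}B$. Well-foundedness is precisely what guarantees that every up-set is the up-closure of its minimal elements and that $\min({\uparrow}B)=B$ for every antichain $B$, so that distinct antichains give distinct down-sets. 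Consequently the number of equational classes in $\interval{\mathcal{C}_1}{\mathcal{C}_2}$ equals the number of antichains of operations contained in $\mathcal{C}_2\setminus\mathcal{C}_1$.

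From this both implications fall out. If $\mathcal{C}_2\setminus\mathcal{C}_1$ contains no infinite antichain, then every antichain of $P$ is finite; a countable set has only countably many finite subsets, so the interval is countable. Conversely, if $\{f_i:i\in\mathbb{N}\}$ is an infinite antichain in $\mathcal{C}_2\setminus\mathcal{C}_1$, its $2^{\aleph_0}$ subsets are distinct antichains and so yield $2^{\aleph_0}$ distinct down-sets; concretely one may take $\mathcal{K}_S=\mathcal{C}_1\cup\bigcup_{i\in S}f_i\mathcal{P}_A$ for $S\subseteq\mathbb{N}$ and check that for $i\in S\setminus T$ the operation $f_i$ lies in $\mathcal{K}_S\setminus\mathcal{K}_T$. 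The antichain hypothesis is used exactly here: were $f_i\in\mathcal{K}_T$, then since $f_i\notin\mathcal{C}_1$ we would get $f_i\preceq_{\bf V}f_j$ for some $j\in T$ with $j\neq i$, contradicting incomparability. Since the total number of classes of operations is at most $2^{\aleph_0}$, being uncountable forces the count to be exactly $2^{\aleph_0}$, matching the statement.

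The step I expect to be the main obstacle is the well-founded down-set/antichain bijection, and in particular checking that the pre-order quotient behaves correctly: I must confirm that equivalent operations always lie in the same down-sets, so that down-sets of $\preceq_{\bf V}$ coincide with down-sets of the quotient poset $P$, and that the minimal-element map really is inverse to $B\mapsto P\setminus{\uparrow}B$ once no infinite descending chain is available. Both the counting of finite antichains and the explicit construction for the uncountable direction are routine once this order-theoretic core is secured.
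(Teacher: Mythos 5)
Your proposal is correct and takes essentially the same route as the paper: the uncountable direction is exactly the paper's construction $\{S\mathcal{P}_A\cup\mathcal{C}_1 : S\subseteq F\}$ for an infinite antichain $F$, and the countable direction is the paper's observation that, since $\preceq_{\bf V}$ admits no infinite descending chains, each class in the interval is determined by the antichain of minimal elements of its relative complement, of which there are only countably many when all antichains are finite. Your explicit down-set/antichain bijection for well-founded posets is simply a more carefully packaged version of the paper's terser argument, not a different method.
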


\begin{proof} Note that the set of all subsets of an infinite (countable) set is uncountable.
Also, distinct subsets of pairwise incomparable functions generate distinct equational classes and thus,
 if $F=(f_i)_{i\in I}$ is an infinite antichain operations in $\mathcal{C}_2\setminus \mathcal{C}_1$, then  
\begin{displaymath}
E= \{S\mathcal{P}_ A\cup \mathcal{C}_1: S\subseteq F\}
\end{displaymath}
is an uncountable ($2^{\aleph _0}$) set of equational classes 
in $\interval{\mathcal{C}_1}{\mathcal{C}_2} $.  

To see that the converse also holds, observe first that 
for each equational class $\mathcal{K} \in \interval{\mathcal{C}_1}{\mathcal{C}_2}$, 
the relative complement ${\mathcal{K}}^{\mathcal{C}_2}_{\mathcal{C}_1}$ of $\mathcal{K} $ in 
$\interval{\mathcal{C}_1}{\mathcal{C}_2}$, given by
  \begin{displaymath}
 \begin{array}{lll}  
 {\mathcal{K}}^{\mathcal{C}_2}_{\mathcal{C}_1}={ \mathcal{C}_1}\cup 
[(\underset{n \geq 1}{\bigcup} A^{A^n}\setminus \mathcal{K})\cap \mathcal{C}_2]
\end{array}
 \end{displaymath}
is completely determined by maximal antichains of its minimal (under $\preceq _{\bf V}$) operations,
 because there are no infinite descending chains with respect to $\preceq _{\bf V}$. 

Now suppose that every antichain in $\mathcal{C}_2\setminus \mathcal{C}_1$
is finite. Then it follows from the above observation that there are only countably many
relative complements of equational classes in 
$\interval{\mathcal{C}_1}{\mathcal{C}_2}$, and thus there are only countably many
equational classes in $\interval{\mathcal{C}_1}{\mathcal{C}_2}$, and the proof of the theorem
is complete.
\end{proof}

\section{The closed intervals of the lattice of equational classes of Boolean functions}

\subsection{Preliminaries} 
We denote by $\Omega =\underset{n \geq 1}{\bigcup}  \mathbb{B}^{\mathbb{B}^n}$
the set of all Boolean functions. 
The set $\mathbb{B}^n$ is a Boolean lattice (distributive and complemented) of $2^n$ elements
 under the component-wise order of vectors
 \begin{displaymath}
(a_1, \ldots, a_n) \preceq (b_1, \ldots, b_n) \textrm{ if and only if } a_i\leq b_i,\textrm{ for all }1 \leq i \leq n.
\end{displaymath}
In this way, all operations on the Boolean lattice $\mathbb{B}$ are generalized to $\mathbb{B}^n$ by
means of component-wise definitions. For example,
 the \emph{complement} of a vector 
$ {\bf a } = (a_1,\ldots , a_n)$ is also defined component-wise by
 $\bar {\bf a}  = (1 - a_1, \ldots , 1 - a_n)$.
 We denote the \emph{all-zero-vector} and the \emph{all-one-vector} by ${\bf 0} = (0, \ldots , 0)$ and
 ${\bf 1} = (1, \ldots , 1)$, respectively. 
The set $\mathbb{B}^{\mathbb{B}^n}$ is also a Boolean lattice of $2^{2^n}$ elements under the 
point-wise ordering of functions, i.e. 
 \begin{displaymath}
f\leq g \textrm{ if and only if } f({\bf a })\leq g({\bf a }),\textrm{ for all }{\bf a } \in \mathbb{B}^n.
\end{displaymath}
The functions (of any arity) having constant value $0$ and $1$ are denoted by 
${\bf 0}$ and ${\bf 1}$, respectively. 
The \emph{complement} of an $n$-ary Boolean function $f$ is the function 
$\bar f$ defined by  
${\bar f}({\bf a})=1-f({\bf a})$, for all ${\bf a } \in \mathbb{B}^n$. For any class $\mathcal{K}$,
 we denote by ${\overline {\mathcal{K}}} = \{{\overline f} :f\in \mathcal{K} \}$. The \emph{dual} of $f$,
 denoted $f^d$, is given by $f^d({\bf a})={\bar f}({\bar {\bf a}})$, for all ${\bf a } \in \mathbb{B}^n$. 
The dual of a class $\mathcal{K}$ of Boolean functions is defined as the set 
$\mathcal{K}^d=\{f^d:f\in \mathcal{K}\}$. We use ${\underline {\mathcal{K}}}$ to denote the class given by 
${\underline {\mathcal{K}}}={\overline {\mathcal{K}^d}}$.
  
\begin{fact}If $\mathcal{K}$ is an equational class, then $\overline {\mathcal{K}}$, $\mathcal{K}^d$ and 
${\underline {\mathcal{K}}}$ 
are also equational classes. In fact, $\mathcal{K}\mapsto {\overline {\mathcal{K}}}$,
$\mathcal{K}\mapsto \mathcal{K}^d$ and $\mathcal{K}\mapsto {\underline {\mathcal{K}}}$
are lattice automorphisms on the set ${\bf E}_\mathbb{B}$
of all equational classes of Boolean functions.  
 \end{fact}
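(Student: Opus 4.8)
The plan is to reduce everything to the characterization of equational classes in Theorem~\ref{theorem:Equational}: a class $\mathcal{K}$ is equational if and only if it is closed under simple variable substitutions, equivalently $\mathcal{K}\mathcal{P}_{\mathbb{B}}=\mathcal{K}$. Thus it suffices to show that each of the three operations commutes suitably with substitution of projections, and then, for the automorphism claim, that the underlying pointwise maps on $\Omega$ are involutive bijections.

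First I would establish two commutation identities at the level of single functions. For an $n$-ary $f$ and $n$-ary projections $p_1,\ldots,p_n$, complementation acts only on output values and is therefore unaffected by substituting the arguments, giving $\overline{f}(p_1,\ldots,p_n)=\overline{f(p_1,\ldots,p_n)}$; hence $\overline{f}\,\mathcal{P}_{\mathbb{B}}=\overline{f\,\mathcal{P}_{\mathbb{B}}}$. For the dual, the key remark is that every projection is self-dual, since $p_i^d(\mathbf{a})=\overline{p_i(\overline{\mathbf{a}})}=\overline{\overline{a_i}}=a_i$. A short pointwise computation then yields $(f(p_1,\ldots,p_n))^d=f^d(p_1,\ldots,p_n)$, so $(f\,\mathcal{P}_{\mathbb{B}})^d=f^d\,\mathcal{P}_{\mathbb{B}}$.

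Passing to classes, these identities immediately give $\overline{\mathcal{K}}\,\mathcal{P}_{\mathbb{B}}=\overline{\mathcal{K}\,\mathcal{P}_{\mathbb{B}}}$ and $\mathcal{K}^d\,\mathcal{P}_{\mathbb{B}}=(\mathcal{K}\,\mathcal{P}_{\mathbb{B}})^d$. If $\mathcal{K}$ is equational, then $\mathcal{K}\mathcal{P}_{\mathbb{B}}=\mathcal{K}$, whence $\overline{\mathcal{K}}\,\mathcal{P}_{\mathbb{B}}=\overline{\mathcal{K}}$ and $\mathcal{K}^d\,\mathcal{P}_{\mathbb{B}}=\mathcal{K}^d$; by Theorem~\ref{theorem:Equational} both $\overline{\mathcal{K}}$ and $\mathcal{K}^d$ are equational. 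Since $\underline{\mathcal{K}}=\overline{\mathcal{K}^d}$ is obtained by composing the two operations, it is equational as well. For the automorphism claim, I would note that each of $f\mapsto\overline{f}$, $f\mapsto f^d$, and $f\mapsto\underline{f}$ is an involutive bijection of $\Omega$: complementation and duality are clearly involutions, and a one-line computation gives $\underline{f}(\mathbf{a})=f(\overline{\mathbf{a}})$, which is plainly involutive. The induced maps on ${\bf E}_{\mathbb{B}}$ are therefore bijections that are their own inverses; being images of classes under a bijection $\varphi$ of $\Omega$, they satisfy $\varphi(\mathcal{K}_1\cap\mathcal{K}_2)=\varphi(\mathcal{K}_1)\cap\varphi(\mathcal{K}_2)$ and $\varphi(\mathcal{K}_1\cup\mathcal{K}_2)=\varphi(\mathcal{K}_1)\cup\varphi(\mathcal{K}_2)$, so each preserves both intersection and union and is a lattice automorphism.

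The only genuinely fiddly point is the dual identity $(f(p_1,\ldots,p_n))^d=f^d(p_1,\ldots,p_n)$: one must track that substituting a projection into $\overline{\mathbf{a}}$ produces the complemented coordinate and match it against the definition of $f^d$. Once self-duality of projections is isolated, this verification is routine, and everything else is bookkeeping.
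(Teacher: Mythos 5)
Your proposal is correct, and it follows the route the paper implicitly intends: the paper states this as a Fact with no proof at all, relying on Theorem~\ref{theorem:Equational} (equational $=$ closed under simple variable substitutions) as the obvious justification. Your commutation identities $\overline{f}(p_1,\ldots,p_n)=\overline{f(p_1,\ldots,p_n)}$ and $(f(p_1,\ldots,p_n))^d=f^d(p_1,\ldots,p_n)$, together with the observation that the three maps are involutive bijections of $\Omega$ preserving unions and intersections, supply exactly the verification the paper leaves to the reader.
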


It is well known that every Boolean function $f$ can be represented in the language of Boolean lattices
by a DNF expression (\emph{disjunctive normal form}), i.e. by an expression of the form
  \begin{displaymath}
\bigvee _{ i\in I}(\bigwedge _{j\in P_i}x_j \bigwedge _{j\in N_i}{\bar x}_j),
\end{displaymath}
 where $I$ is a finite, possibly empty, set of indices and each variable appears at 
most once in each conjunct.
 We regard empty disjunctions and empty conjunctions
as representing constant functions $\bf 0$ and $\bf 1$, respectively.
It is easy to verify that if 
  \begin{displaymath}
f=\bigvee _{ i\in I}(\bigwedge _{j\in P_i}x_j \bigwedge _{j\in N_i}{\bar x}_j),
\end{displaymath}
then the dual $f^d$ of $f$ is represented by
  \begin{displaymath}
f^d=\bigwedge _{ i\in I}(\bigvee _{j\in P_i}x_j \bigvee _{j\in N_i}{\bar x}_j) \qquad \qquad (1)
\end{displaymath} 
Expressions of the form $(1)$ are called CNF (\emph{conjunctive normal form}) representations.

 Since Stone \cite{S}, it is well-known that any Boolean lattice can be viewed as a Boolean ring 
(i.e. a commutative ring in which every element is idempotent under product)
 by defining multiplication and addition by
 \begin{displaymath}
x\cdot y =x\wedge y \textrm{ and } x\oplus y= (\bar {x}\wedge y)\vee (x\wedge \bar {y}).
\end{displaymath}
Thus both $\mathbb{B}^n$ and $\mathbb{B}^{\mathbb{B}^n}$ can also be treated as Boolean rings
by making use of the above algebraic translations. 
It is not difficult to see that each $n$-ary Boolean function $f$ can be represented
in this Boolean ring language by a multilinear polynomial in $n$ indeterminates over $\mathbb{B}$,  
called its \emph{Zhegalkin polynomial} or \emph{Reed-Muller polynomial}
 \begin{displaymath}
f={\Sigma  }_{ j\in I} (c_j\cdot  {\Pi }_{i\in I_j}x_i) 
\end{displaymath} 
Unlike DNF and CNF representations, the Zhegalkin polynomial representation
of a Boolean function is unique (up to permutation of terms and permutation of variables in the terms).
For further normal form representations of Boolean functions, see \cite{CFL}.

Recall that (\emph{Boolean}) \emph{clone} is a class 
$\mathcal{C}\subseteq \underset{n \geq 1}{\bigcup}  \mathbb{B}^{\mathbb{B}^n}$
idempotent under class composition and containing all projections. In the Boolean case,
 the only idempotent classes which are not clones
are exactly the empty class $\emptyset $, the class $C_0$ of constant $0$ functions,
the class $ C_1$ of constant $1$ functions, and the class $C$ containing all constants.  

The clones of Boolean functions
form an algebraic lattice by defining the 
 meet as the intersection of clones and the join as the smallest clone containing the union. 
 This lattice is known as Post Lattice (see Figure~\ref{PostLattice}),
 named after Emil Post who first described and classified in \cite{Post}
 the set of all Boolean clones (for recent and shorter proofs of Post's classification 
see \cite{RD}, \cite{U}, \cite{Z}; for general background see \cite{JGK} and \cite{Pi1}).
 We make use of notations and terminology appearing in \cite{FP} and in \cite{JGK}.  
 
 \begin{figure}[p]
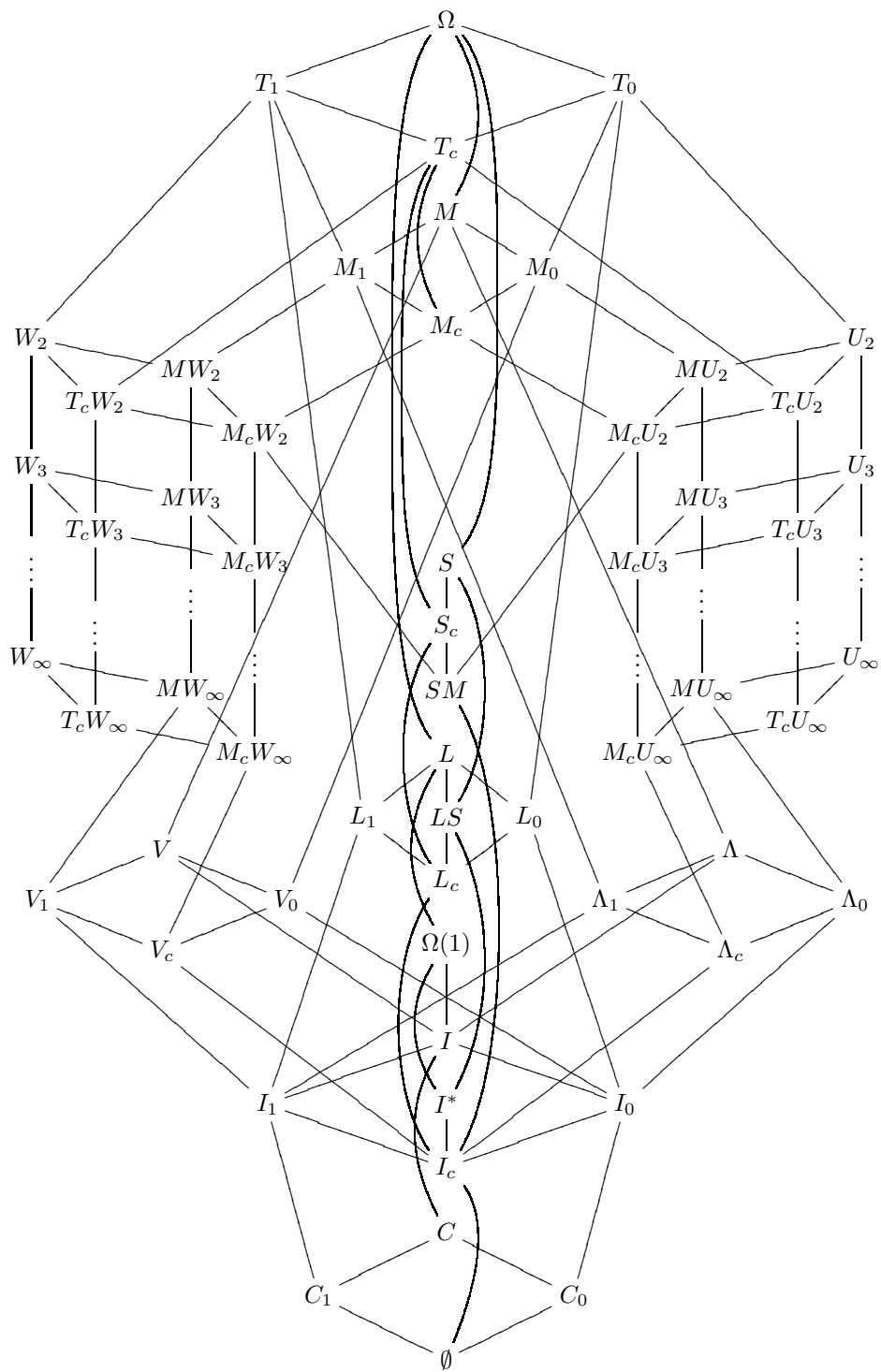

%\shorthandoff{"}
\[
\xy /r0.215pc/:,
(0,-30)*+{\emptyset }="E",
(20,-20)*+{C_0}="0",
(-20,-20)*+{C_1}="1",
(0,-10)*+{C}="O",
(0,0)*+{I_c}="Ic",
(0,10)*+{I^*}="Ia",
(28,10)*+{I_0}="I0",
(-28,10)*+{I_1}="I1",
(0,20)*+{I}="I",
(0,35)*+{\Omega(1)}="Omega1",
(44.5,34)*+{\Lambda_c}="Lambdac",
(64,42)*+{\Lambda_0}="Lambda0",
(25,42)*+{\Lambda_1}="Lambda1",
(44.5,50)*+{\Lambda}="Lambda",
(-44.5,34)*+{V_c}="Vc",
(-64,42)*+{V_1}="V1",
(-25,42)*+{V_0}="V0",
(-44.5,50)*+{V}="V",
(0,45)*+{L_c}="Lc",
(0,55)*+{LS}="LS",
(13,55)*+{L_0}="L0",
(-13,55)*+{L_1}="L1",
(0,65)*+{L}="L",
(0,75)*+{SM}="SM",
(0,85)*+{S_c}="Sc",
(0,95)*+{S}="S",
(0,132)*+{M_c}="Mc",
(15,141)*+{M_0}="M0",
(-15,141)*+{M_1}="M1",
(0,150)*+{M}="M",
(0,160)*+{T_c}="Tc",
(28,170)*+{T_0}="T0",
(-28,170)*+{T_1}="T1",
(0,180)*+{\Omega}="Omega",
(30,65)*+{M_cU_\infty}="McUinf",
(40,75)*+{MU_\infty}="MUinf",
(55,70)*+{T_cU_\infty}="TcUinf",
(65,80)*+{U_\infty}="Uinf",
(30,80)*+{\vdots}="McUdots",
(40,90)*+{\vdots}="MUdots",
(55,85)*+{\vdots}="TcUdots",
(65,95)*+{\vdots}="Udots",
(30,95)*+{M_cU_3}="McU3",
(40,105)*+{MU_3}="MU3",
(55,100)*+{T_cU_3}="TcU3",
(65,110)*+{U_3}="U3",
(30,115)*+{M_cU_2}="McU2",
(40,125)*+{MU_2}="MU2",
(55,120)*+{T_cU_2}="TcU2",
(65,130)*+{U_2}="U2",
(-30,65)*+{M_cW_\infty}="McWinf",
(-40,75)*+{MW_\infty}="MWinf",
(-55,70)*+{T_cW_\infty}="TcWinf",
(-65,80)*+{W_\infty}="Winf",
(-30,80)*+{\vdots}="McWdots",
(-40,90)*+{\vdots}="MWdots",
(-55,85)*+{\vdots}="TcWdots",
(-65,95)*+{\vdots}="Wdots",
(-30,95)*+{M_cW_3}="McW3",
(-40,105)*+{MW_3}="MW3",
(-55,100)*+{T_cW_3}="TcW3",
(-65,110)*+{W_3}="W3",
(-30,115)*+{M_cW_2}="McW2",
(-40,125)*+{MW_2}="MW2",
(-55,120)*+{T_cW_2}="TcW2",
(-65,130)*+{W_2}="W2",
"E";"0"**@{-},
"0";"I0"**@{-},
"E";"1"**@{-},
"1";"I1"**@{-},
"1";"O"**@{-},
"0";"O"**@{-},
"Ic";"Ia"**@{-},
"Ic";"I0"**@{-},
"Ic";"Ia"**@{-},
"Ic";"I1"**@{-},
"Ic";"Lambdac"**@{-},
"Ic";"Vc"**@{-},
"I0";"I"**@{-},
"I0";"Lambda0"**@{-},
"I0";"V0"**@{-},
"I0";"L0"**@{-},
"I1";"I"**@{-},
"I1";"V1"**@{-},
"I1";"Lambda1"**@{-},
"I1";"L1"**@{-},
"I";"Omega1"**@{-},
"I";"Lambda"**@{-},
"I";"V"**@{-},
"Lambdac";"Lambda0"**@{-},
"Lambdac";"Lambda1"**@{-},
"Lambda0";"Lambda"**@{-},
"Lambda1";"Lambda"**@{-},
"Lambdac";"McUinf"**@{-},
"Lambda0";"MUinf"**@{-},
"Lambda1";"M1"**@{-},
"Lambda";"M"**@{-},
"Vc";"V0"**@{-},
"Vc";"V1"**@{-},
"V0";"V"**@{-},
"V1";"V"**@{-},
"Vc";"McWinf"**@{-},
"V1";"MWinf"**@{-},
"V0";"M0"**@{-},
"V";"M"**@{-},
"Lc";"L0"**@{-},
"Lc";"L1"**@{-},
"Lc";"LS"**@{-},
"L0";"L"**@{-},
"L1";"L"**@{-},
"LS";"L"**@{-},
"L0";"T0"**@{-},
"L1";"T1"**@{-},
"SM";"Sc"**@{-},
"Sc";"S"**@{-},
"SM";"McU2"**@{-},
"SM";"McW2"**@{-},
"Mc";"M0"**@{-},
"Mc";"M1"**@{-},
"M0";"M"**@{-},
"M1";"M"**@{-},
"M0";"T0"**@{-},
"M1";"T1"**@{-},
"Tc";"T0"**@{-},
"Tc";"T1"**@{-},
"T0";"Omega"**@{-},
"T1";"Omega"**@{-},
"McUinf";"TcUinf"**@{-},
"McUinf";"MUinf"**@{-},
"TcUinf";"Uinf"**@{-},
"MUinf";"Uinf"**@{-},
"McUinf";"McUdots"**@{-},
"MUinf";"MUdots"**@{-},
"TcUinf";"TcUdots"**@{-},
"Uinf";"Udots"**@{-},
"McUdots";"McU3"**@{-},
"MUdots";"MU3"**@{-},
"TcUdots";"TcU3"**@{-},
"Udots";"U3"**@{-},
"McU3";"TcU3"**@{-},
"McU3";"MU3"**@{-},
"TcU3";"U3"**@{-},
"MU3";"U3"**@{-},
"McU3";"McU2"**@{-},
"MU3";"MU2"**@{-},
"TcU3";"TcU2"**@{-},
"U3";"U2"**@{-},
"McU2";"TcU2"**@{-},
"McU2";"MU2"**@{-},
"TcU2";"U2"**@{-},
"MU2";"U2"**@{-},
"McU2";"Mc"**@{-},
"MU2";"M0"**@{-},
"TcU2";"Tc"**@{-},
"U2";"T0"**@{-},
"McWinf";"TcWinf"**@{-},
"McWinf";"MWinf"**@{-},
"TcWinf";"Winf"**@{-},
"MWinf";"Winf"**@{-},
"McWinf";"McWdots"**@{-},
"MWinf";"MWdots"**@{-},
"TcWinf";"TcWdots"**@{-},
"Winf";"Wdots"**@{-},
"McWdots";"McW3"**@{-},
"MWdots";"MW3"**@{-},
"TcWdots";"TcW3"**@{-},
"Wdots";"W3"**@{-},
"McW3";"TcW3"**@{-},
"McW3";"MW3"**@{-},
"TcW3";"W3"**@{-},
"MW3";"W3"**@{-},
"McW3";"McW2"**@{-},
"MW3";"MW2"**@{-},
"TcW3";"TcW2"**@{-},
"W3";"W2"**@{-},
"McW2";"TcW2"**@{-},
"McW2";"MW2"**@{-},
"TcW2";"W2"**@{-},
"MW2";"W2"**@{-},
"McW2";"Mc"**@{-},
"MW2";"M1"**@{-},
"TcW2";"Tc"**@{-},
"W2";"T1"**@{-},
"E";"Ic"**\crv{(10,-7)},
"Ic";"Lc"**\crv{(-10,7)&(-10,38)},
"Ic";"SM"**\crv{(11,10)&(11,65)},
"Ia";"Omega1"**\crv{(-10,22.5)},
"Ia";"LS"**\crv{(8,22)&(8,43)},
"O";"I"**\crv{(-10,7)},
"Omega1";"L"**\crv{(-11,50)},
"Lc";"Sc"**\crv{(-9,52)&(-9,78)},
"LS";"S"**\crv{(8,63)&(8,87)},
"L";"Omega"**\crv{(-8.5,70)&(-8.5,122.5)&(-8.5,175)},
"Sc";"Tc"**\crv{(-7,90)&(-7,122.5)&(-7,155)},
"S";"Omega"**\crv{(8,100)&(8,137.5)&(8,175)},
"Mc";"Tc"**\crv{(-9,146)},
"M";"Omega"**\crv{(10,165)}
\endxy
\]
%\shorthandon{"}
\caption{\label{PostLattice}Post Lattice.}
\end{figure}

 \begin{itemize}
\item $\Omega $ denotes the class $\underset{n \geq 1}{\bigcup} \mathbb{B}^{\mathbb{B}^n}$ of all Boolean functions; 
\item $T_0$ and $T_1$ denote the classes of $0$- and $1$-preserving functions, respectively, 
i.e.,\newline 
$T_0 = \{f \in \Omega : f(0, \ldots, 0) = 0\}$, $T_1 = \{f \in \Omega : f(1, \ldots, 1) = 1\}$;

\item $T_c$ denotes the class of constant-preserving functions, i.e.,
$T_c = T_0 \cap T_1$.

\item $M$ denotes the class of all monotone functions, i.e.,\newline
$M = \{f \in \Omega : \text{$f( {\bf a}) \leq f( {\bf b})$, whenever 
${\bf a} \preceq {\bf b}$}\}$;

\item $M_0 = M \cap T_0$, $M_1 = M \cap T_1$, $M_c = M \cap T_c$; 

\item $S$ denotes the class of all self-dual functions, i.e.,\newline
$S = \{f \in \Omega : f^d = f\}$;

\item $S_c = S \cap T_c$, $SM = S \cap M$;

\item $L$ denotes the class of all linear functions, i.e.,\newline
$L = \{f \in \Omega : \text{$f = c_0 {\bf 1} + c_1 x_1 + \dots + c_n x_n$ for some $n$
 and $c_0, \ldots, c_n \in \mathbb{B}$}\}$;

\item $L_0 = L \cap T_0$, $L_1 = L \cap T_1$, $LS = L \cap S$, $L_c = L \cap T_c$;
\end{itemize}

\noindent
Let $a \in \{0,1\}$. A set $A \subseteq \{0,1\}^n$ is said to be \emph{$a$-separating} 
if there is $i$, $1\leq i\leq n$, such that for every $(a_1, \ldots, a_n) \in A$ we have $a_i = a$.
 A function $f$ is said to be \emph{$a$-separating} if $f^{-1}(a)$ is $a$-separating. 
The function $f$ is said to be \emph{$a$-separating of rank $k \geq 2$} 
if every subset $A \subseteq f^{-1}(a)$ of size at most $k$ is $a$-separating.

\begin{itemize}
\item For $m \geq 2$, $U_m$ and $W_m$ denote the classes of all $1$- and $0$-separating 
functions of rank $m$, respectively;   

\item $U_\infty$ and $W_\infty$ denote the classes of all $1$- and $0$-separating functions,
 respectively, i.e., $U_\infty = \bigcap_{k \geq 2} U_k$ and $W_\infty = \bigcap_{k \geq 2} W_k$;

\item $T_cU_m = T_c \cap U_m$ and $T_cW_m = T_c \cap W_m$, for $m = 2, \ldots, \infty$;

\item $MU_m = M \cap U_m$ and $MW_m = M \cap W_m$, for $m = 2, \ldots, \infty$;

\item $M_cU_m = M_c \cap U_m$ and $M_cW_m = M_c \cap W_m$, for $m = 2, \ldots, \infty$;

\item $\Lambda $ denotes the class of all conjunctions and constants, i.e.,\newline
$\Lambda = \{f \in \Omega : f = {\bf 0}, {\bf 1}, x_{i_1} \wedge \dots \wedge x_{i_n}
\text{ for some $n \geq 1$ and $i_j$'s}\}$;

\item $\Lambda_0 = \Lambda \cap T_0$, $\Lambda_1 = \Lambda \cap T_1$, $\Lambda_c = \Lambda \cap T_c$; 

\item $V$ denotes the class of all disjunctions and constants, i.e.,\newline
$V = \{f \in \Omega : \text{$f = {\bf 0}, {\bf 1}, x_{i_1} \vee \dots \vee x_{i_n}$ 
for some $n \geq 1$ and $i_j$'s}\}$;

\item $V_0 = V \cap T_0$, $V_1 = V \cap T_1$, $V_c=V \cap T_c$;

\item $\Omega (1)$ denotes the class of all quasi-monadic functions, i.e. variables, negated variables, and constants;

\item $I^*$ denotes the class of all variables and negated variables;

\item $I$ denotes the class of all variables and constants;

\item $I_0 = I \cap T_0$, $I_1 = I \cap T_1$; 

\item $I_c$ denotes the smallest clone containing only variables, i.e., $I_c = I \cap T_c$.
\end{itemize}

Since there are essentially 4 quasi-monadic Boolean functions,
 namely $\{x_1,{\bar x_1}, {\bf 0}, {\bf 1}\}$, and since
$\Omega (1)= \{x_1,{\bar x_1}, {\bf 0}, {\bf 1}\}I_c= x_1I_c\cup {\bar x_1}I_c \cup {\bf 0}I_c \cup {\bf 1}I_c$,
 we have:
  
\begin{theorem} 
There are exactly $16$ equational classes in $\interval{\emptyset }{\Omega (1)}$. 
\end{theorem}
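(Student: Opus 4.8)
The plan is to exploit the description $\Omega(1)=\{x_1,\bar x_1,\mathbf 0,\mathbf 1\}I_c$ together with the fact that, in the Boolean case, $I_c$ is precisely the class $\mathcal{P}_{\mathbb{B}}$ of projections (being, by the Fact, the smallest clone and containing only variables). Thus by Theorem~\ref{theorem:Equational} every equational class $\mathcal{K}$ in $\interval{\emptyset}{\Omega(1)}$ satisfies $\mathcal{K}=\mathcal{K}\mathcal{P}_{\mathbb{B}}=\bigcup_{f\in\mathcal{K}}f\mathcal{P}_{\mathbb{B}}$, so that $\mathcal{K}$ is completely determined by the $\preceq_{\bf V}$-equivalence classes of its members. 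I would first record that, up to equivalence, $\Omega(1)$ consists of exactly the four functions $x_1,\bar x_1,\mathbf 0,\mathbf 1$: after deleting dummy variables, a quasi-monadic function is either a projection, a negated projection, or one of the two constants, and each is equivalent under $\preceq_{\bf V}$ to the corresponding displayed representative.

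The key step is to verify that these four representatives form an antichain with respect to $\preceq_{\bf V}$, i.e.\ that they are pairwise incomparable. Since $g\preceq_{\bf V}f$ means $g=f(p_1,\ldots,p_n)$ for projections $p_1,\ldots,p_n$, a simple variable substitution applied to a projection again yields a projection, applied to a negated projection yields a negated projection, and applied to a constant yields the same constant. Hence none of $x_1,\bar x_1,\mathbf 0,\mathbf 1$ can be obtained from another by simple variable substitution. Consequently the four classes $x_1\mathcal{P}_{\mathbb{B}}$ (all projections), $\bar x_1\mathcal{P}_{\mathbb{B}}$ (all negated projections), $\mathbf 0\mathcal{P}_{\mathbb{B}}$ (the constant-$0$ functions), and $\mathbf 1\mathcal{P}_{\mathbb{B}}$ (the constant-$1$ functions) are pairwise disjoint, nonempty equational classes, each a minimal nonempty member of $\interval{\emptyset}{\Omega(1)}$.

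Finally I would conclude the count. Every $\mathcal{K}\in\interval{\emptyset}{\Omega(1)}$ is the union of those among the four atoms that it contains, so the map sending $\mathcal{K}$ to the set $S\subseteq\{x_1\mathcal{P}_{\mathbb{B}},\bar x_1\mathcal{P}_{\mathbb{B}},\mathbf 0\mathcal{P}_{\mathbb{B}},\mathbf 1\mathcal{P}_{\mathbb{B}}\}$ of atoms contained in $\mathcal{K}$ is a bijection onto the power set: distinct subsets of pairwise incomparable functions generate distinct equational classes (as already observed in the proof of Theorem~\ref{theorem:characterization}), and conversely any union of the atoms is closed under simple variable substitutions and hence equational by Theorem~\ref{theorem:Equational}. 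Therefore $\interval{\emptyset}{\Omega(1)}$ has exactly $2^{4}=16$ members; this is precisely the finite instance of Theorem~\ref{theorem:finite intervals} with the relevant antichain made explicit.

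The only genuine obstacle is the incomparability verification of the second paragraph, which hinges on the observation that $\preceq_{\bf V}$ permits only substitution of projections and can therefore neither create a constant from a non-constant quasi-monadic function nor pass between variables and their negations; everything else in the argument is routine bookkeeping.
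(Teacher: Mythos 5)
Your proposal is correct and follows essentially the same route as the paper, which derives the count from the decomposition $\Omega(1)=x_1I_c\cup\bar x_1I_c\cup{\bf 0}I_c\cup{\bf 1}I_c$ into the four classes generated by the (up to equivalence) four quasi-monadic functions; you merely make explicit the details the paper leaves implicit, namely the pairwise incomparability of $x_1,\bar x_1,{\bf 0},{\bf 1}$ and the resulting bijection between equational classes in $\interval{\emptyset}{\Omega(1)}$ and subsets of the four atoms, giving $2^4=16$.
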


\begin{figure}[thp]
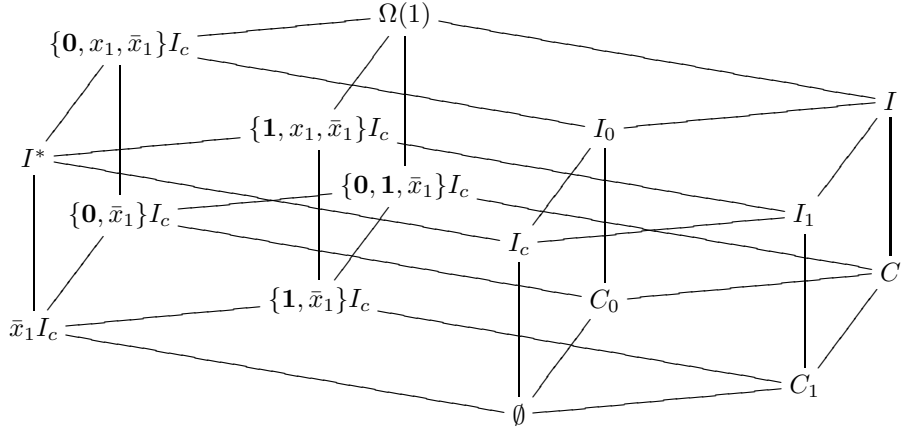

%\shorthandoff{"}
\[
\xy /r0.180pc/:,
(10,10)*+{\emptyset }="b",
(25,30)*+{ C_0}="b0",
(60,15)*+{C_1}="b1",
(75,35)*+{ C}="b01",
(10,40) *+{ I_c}="bx",
(25,60) *+{ I_0}="b0x",
(60,45) *+{ I_1}="b1x",
(75,65) *+{ I}="b01x",
(-75,25)*+{{\bar x_1} I_c}="bn",
(-60,45)*+{ \{{\bf 0},{\bar x_1}\} I_c}="bn0",
(-25,30)*+{ \{{\bf 1},{\bar x_1}\} I_c}="bn1",
(-10,50)*+{ \{{\bf 0},{\bf 1},{\bar x_1}\} I_c}="bn01",
(-75,55) *+{I^*}="bnx",
(-60,75) *+{ \{{\bf 0},x_1,{\bar x_1}\}  I_c}="bn0x",
(-25,60) *+{\{{\bf 1},x_1,{\bar x_1}\} I_c}="bn1x",
(-10,80) *+{\Omega (1)}="bn01x",
"b";"b0"**@{-},
"b";"b1"**@{-},
"b";"bx"**@{-},
"b";"bn"**@{-},
"b0";"b01"**@{-},
"b0";"b0x"**@{-},
"b0";"bn0"**@{-},
"b1";"b01"**@{-},
"b1";"b1x"**@{-},
"b1";"bn1"**@{-},
"bx";"b0x"**@{-},
"bx";"b1x"**@{-},
"bx";"bnx"**@{-},
"bn";"bn0"**@{-},
"bn";"bn1"**@{-},
"bn";"bnx"**@{-},
"b01";"b01x"**@{-},
"b01";"bn01"**@{-},
"b0x";"b01x"**@{-},
"b0x";"bn0x"**@{-},
"b1x";"b01x"**@{-},
"b1x";"bn1x"**@{-},
"bn0";"bn0x"**@{-},
"bn0";"bn01"**@{-},
"bn1";"bn01"**@{-},
"bn1";"bn1x"**@{-},
"bnx";"bn0x"**@{-},
"bnx";"bn1x"**@{-},
"bn01";"bn01x"**@{-},
"b01x";"bn01x"**@{-},
"bn0x";"bn01x"**@{-},
"bn1x";"bn01x"**@{-},
\endxy
\]
%\shorthandon{"}
\caption{\label{FinLattice0}Lattice of equational classes containing only quasi-monadic functions.}
\end{figure}

Looking at Figure~\ref{PostLattice}, we see that the Post Lattice is co-atomic, that is,
 every clone is contained in a \emph{maximal} clone (co-atom).
In fact, for any finite set $A$, the lattice of clones on $A$ is co-atomic, 
and the number of maximal clones (co-atoms) is known to be finite (see \cite{Ro}). 
This is not the case in the lattice of equational classes. 

\begin{theorem}
The lattice ${\bf E}_{\mathbb{B}} $ has no co-atoms.
\end{theorem}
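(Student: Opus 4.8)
The plan is to reduce the statement to a single construction and then conclude combinatorially. By Theorem~\ref{theorem:Equational}, the equational classes are exactly the classes closed under simple variable substitution; equivalently, $\mathcal{K}$ is equational if and only if $g\preceq_{\mathbf V}f\in\mathcal{K}$ forces $g\in\mathcal{K}$, so $\mathbf E_{\mathbb B}$ is precisely the lattice of down-sets of the pre-order $(\Omega,\preceq_{\mathbf V})$. A co-atom is a proper equational class $\mathcal{K}\subsetneq\Omega$ with no equational class strictly between $\mathcal{K}$ and $\Omega$. Hence it suffices to prove that \emph{every} proper equational class $\mathcal{K}$ admits an equational class $\mathcal{M}$ with $\mathcal{K}\subsetneq\mathcal{M}\subsetneq\Omega$; then no $\mathcal{K}$ is covered by $\Omega$, and $\mathbf E_{\mathbb B}$ has no co-atoms.

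The key ingredient is a blow-up lemma: for every Boolean function $f$ there is a function $f'$ with $f\preceq_{\mathbf V}f'$ and $f'\not\preceq_{\mathbf V}f$, i.e.\ $f$ is recovered from $f'$ by identifying two variables while $f'$ has strictly larger essential arity. Assuming (as we may, up to equivalence) that $f$ has essential arity $n\ge 1$ with $x_1$ essential, I would take a fresh variable $x_{n+1}$ and set $f'(x_1,\dots,x_n,x_{n+1})=f(x_1\wedge x_{n+1},\,x_2,\dots,x_n)$, treating the two constant cases $n=0$ directly (e.g.\ $\mathbf 0\preceq_{\mathbf V}x_1\wedge\bar x_2$ and $\mathbf 1\preceq_{\mathbf V}x_1\vee\bar x_2$). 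Substituting $x_{n+1}\mapsto x_1$ gives $f'(x_1,\dots,x_n,x_1)=f(x_1\wedge x_1,x_2,\dots,x_n)=f$, so $f\preceq_{\mathbf V}f'$. The essentiality check is the technical heart: setting $x_{n+1}=1$ makes $f'$ reduce to $f$ and witnesses $x_1$ essential; setting $x_1=1$ does the same for $x_{n+1}$; and for each $x_i$ with $2\le i\le n$ one realizes whatever value of the first argument its witness in $f$ requires (take $x_{n+1}=1$ when that value is $1$, and $x_1=0$ when it is $0$), so $x_i$ stays essential. Thus $f'$ has essential arity $n+1$, and since simple variable substitution cannot increase essential arity, $f'\not\preceq_{\mathbf V}f$. (A naive additive perturbation such as $f\oplus\bigl(x_1\wedge(x_n\oplus x_{n+1})\bigr)$ must be avoided, as it collapses for $f=x_1\wedge x_2$; the multiplicative substitution above does not.)

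With the lemma in hand the theorem follows quickly. Given a proper equational class $\mathcal{K}\subsetneq\Omega$, pick $f\in\Omega\setminus\mathcal{K}$ and let $f'$ be its blow-up. Set $\mathcal{M}=\mathcal{K}\cup f\mathcal P_{\mathbb B}$, the equational class generated by $\mathcal{K}$ together with $f$ (a union of down-sets, hence again equational by Theorem~\ref{theorem:Equational}). Then $f\in\mathcal{M}\setminus\mathcal{K}$, so $\mathcal{K}\subsetneq\mathcal{M}$. Conversely $f'\notin\mathcal{K}$ (otherwise $f\preceq_{\mathbf V}f'\in\mathcal{K}$ would force $f\in\mathcal{K}$) and $f'\notin f\mathcal P_{\mathbb B}$ (since $f'\not\preceq_{\mathbf V}f$), so $f'\in\Omega\setminus\mathcal{M}$ and $\mathcal{M}\subsetneq\Omega$. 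Hence $\mathcal{K}$ is not covered by $\Omega$; as $\mathcal{K}$ was arbitrary, $\mathbf E_{\mathbb B}$ has no co-atoms.

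I expect the main obstacle to be the blow-up construction itself, specifically finding an increment that simultaneously recovers $f$ on the identification diagonal, renders the new variable essential, and keeps every old variable essential. The multiplicative substitution $x_1\mapsto x_1\wedge x_{n+1}$ is designed exactly so that every essential variable of $f$ can be separately witnessed in $f'$ by fixing the auxiliary variable to $1$; once this is verified the remainder is routine. Conceptually, the lemma says $(\Omega,\preceq_{\mathbf V})$ has no maximal elements, which is the underlying reason the lattice of down-sets is co-atomless.
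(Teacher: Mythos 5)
Your proposal is correct and follows essentially the same route as the paper: the paper likewise takes a would-be co-atom $\mathcal{M}$, picks $f\notin\mathcal{M}$, and exhibits $\mathcal{M}\subset\mathcal{M}\cup fI_c\subset\Omega$, where properness of the second inclusion is witnessed by a blow-up $f'$ with $f\preceq_{\bf V}f'$ and $f'\not\preceq_{\bf V}f$. The only difference is the gadget: the paper uses $f'=x+y+f$ (addition modulo $2$ of two fresh variables, which collapses to $f$ upon identifying $x=y$), together with a short separate case analysis for quasi-monadic $f$, whereas your multiplicative substitution $f(x_1\wedge x_{n+1},x_2,\ldots,x_n)$ treats all non-constant $f$ uniformly at the cost of the explicit essentiality check.
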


\begin{proof}
For a contradiction, suppose that ${\bf E}_{\mathbb{B}} $ has a co-atom, say $\mathcal{M}$.
Let $f\in \Omega \setminus \mathcal{M}$. If
\begin{itemize}
\item $f={x}$, then $\mathcal{M} \cap L_c=\emptyset $,
\item $f={\bar x}$, then $\mathcal{M} \cap (LS\setminus L_c)=\emptyset $,
\item $f={\bf 0}$, then $\mathcal{M} \cap (L_0\setminus L_c)=\emptyset $,
\item $f={\bf 1}$, then $\mathcal{M} \cap (L_1\setminus L_c)=\emptyset $,
\end{itemize} 
and thus 
$\mathcal{M} \subset \mathcal{M} \cup f I_c\subset \mathcal{M} \cup \{f, f'\} I_c\subseteq \Omega $,
 for a suitable $f'$ in e.g. $\{x_1+x_2+x_3 ,x_1+x_2+x_3+1, x_1+x_2, x_1+x_2+1\}$,
 contradicting our assumption.

So let $f\not=x,{\bar x},{\bf 0},{\bf 1}$ be of essential arity $n\geq 2$. Without loss of generality,
assume that $f$ has no dummy variables.
Now consider $f'=x+y+f$, where $x$ and $y$ are not essential variables of $f$. Obviously,
$f'\not\preceq _{\bf V}f$. Furthermore, $f'\not\in \mathcal{M} $, otherwise,
 by identifying $x=y$ we would have $f\in \mathcal{M} $.
Hence, $\mathcal{M} \subset \mathcal{M} \cup f I_c\subset \Omega $, which yields the desired contradiction.
\end{proof}

\subsection{Antichains of Boolean functions} 
In the sequel, we will make use of the following fact.
\begin{fact}
\label{fact:dual} Let $\mathcal{C}_1$ and $\mathcal{C}_2 $ be idempotent classes such that 
$\mathcal{C}_1\subseteq  \mathcal{C}_2$. If $(f_i)_{i\in I}$ is an antichain in 
$\mathcal{C}_2\setminus \mathcal{C}_1$, then $( {\overline {f_i}} )_{i\in I}$,
$(f^d_i)_{i\in I}$ and $({\overline {f^d_i}} )_{i\in I}$
 are antichains in ${\overline {\mathcal{C}_2}} \setminus {\overline {\mathcal{C}_1}} $,
 $\mathcal{C}^d_2\setminus \mathcal{C}^d_1$ and 
${\underline {\mathcal{C}_2}} \setminus {\underline {\mathcal{C}_1}}$,
 respectively.
\end{fact}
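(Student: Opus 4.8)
The plan is to show that each of the three maps $f\mapsto \overline{f}$, $f\mapsto f^d$ and $f\mapsto \underline{f}=\overline{f^d}$ is a bijection on $\Omega$ that both preserves and reflects the pre-order $\preceq_{\bf V}$, and then to transport the antichain $(f_i)_{i\in I}$ through these bijections. Since being an antichain in $\mathcal{C}_2\setminus\mathcal{C}_1$ means lying in the set difference and being pairwise $\preceq_{\bf V}$-incomparable, both conclusions follow once two ingredients are in place: order-preservation on the one hand, and compatibility with the set difference on the other. I would treat the three assertions in parallel, since the dual case carries the only real computation and the third map is simply the composite of the other two.

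\emph{Order-preservation.} First I would record the identities relating simple variable substitution to complementation and dualization. For complementation this is immediate: for any projections $p_1,\ldots,p_n$ one has $\overline{f(p_1,\ldots,p_n)}=\overline{f}(p_1,\ldots,p_n)$, straight from $\overline{f}({\bf a})=1-f({\bf a})$. For dualization I would use that projections are \emph{self-dual}, i.e. $p^d=p$ for every projection $p$ (since $p({\bf a})=a_i$ gives $p^d({\bf a})=\overline{p}(\overline{\bf a})=a_i$), together with the distributivity of dualization over composition, $(f(g_1,\ldots,g_n))^d=f^d(g_1^d,\ldots,g_n^d)$. Specializing $g_k=p_k$ yields $(f(p_1,\ldots,p_n))^d=f^d(p_1,\ldots,p_n)$. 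Consequently $g\preceq_{\bf V}f$ implies $\overline{g}\preceq_{\bf V}\overline{f}$ and $g^d\preceq_{\bf V}f^d$; because complementation and dualization are involutions ($\overline{\overline{f}}=f$, $f^{dd}=f$), these implications are in fact equivalences, so each map preserves and reflects $\preceq_{\bf V}$. The map $f\mapsto\underline{f}$, being the composite of the two, inherits the same property.

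\emph{Compatibility with the set difference.} Each of the three maps is injective (being an involution, or a composite of involutions), so for $f\in\mathcal{C}_2$ we have $\overline{f}\in\overline{\mathcal{C}_1}$ if and only if $f\in\mathcal{C}_1$, and likewise for the dual and for $\underline{\cdot}$. Since $\mathcal{C}_1\subseteq\mathcal{C}_2$, this yields $\overline{\mathcal{C}_2}\setminus\overline{\mathcal{C}_1}=\overline{\mathcal{C}_2\setminus\mathcal{C}_1}$, and analogously $\mathcal{C}_2^d\setminus\mathcal{C}_1^d=(\mathcal{C}_2\setminus\mathcal{C}_1)^d$ and $\underline{\mathcal{C}_2}\setminus\underline{\mathcal{C}_1}=\underline{\mathcal{C}_2\setminus\mathcal{C}_1}$. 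Hence each $f_i\in\mathcal{C}_2\setminus\mathcal{C}_1$ is carried into the corresponding set difference.

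Putting the two steps together, applying the relevant bijection to the members of $(f_i)_{i\in I}$ lands them in the appropriate set difference (second step) while preserving and reflecting pairwise incomparability (first step), so the images form antichains there. I do not expect a genuine obstacle: the only point requiring care is the verification that dualization commutes with simple variable substitution, which rests entirely on the self-duality of projections. I would isolate that identity at the outset, after which the remaining arguments reduce to the formal bookkeeping of involutions and the preservation of essential arity.
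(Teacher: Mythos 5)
Your proof is correct. The paper states this Fact without proof, treating it as routine, and your argument is exactly the intended justification: complementation and dualization commute with simple variable substitutions (the latter because projections are self-dual), so these involutions preserve and reflect $\preceq _{\bf V}$ and carry $\mathcal{C}_2\setminus \mathcal{C}_1$ bijectively onto the corresponding set differences.
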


Lemma~\ref{lemma1} is a particular case of Proposition 3.4 in \cite{Pi2}.
\begin{lemma}\label{lemma1}
The family $(f_n)_{n\geq 4}$ of $0$-preserving Boolean functions, given by 
\begin{displaymath}
f_n(x_1,\ldots ,x_n)=\left\{
             \begin{array}{ll}
                     1         &       \mbox{if $\#\{i:x_i=1\}\in \{1,n-1\}$}   \\
                     0         &       \mbox{otherwise}.                
             \end{array}\right.
 \end{displaymath}
constitutes an (infinite) antichain of Boolean functions, i.e. if $m\not=n$, then 
$f_m\not\preceq _{\bf V}f_n$ and $f_n\not\preceq _{\bf V}f_m$.
\end{lemma}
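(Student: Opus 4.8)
The plan is to prove pairwise incomparability directly from the definition of $\preceq_{\bf V}$, and by relabeling it suffices to treat the case $m<n$ and to show that neither $f_m\preceq_{\bf V}f_n$ nor $f_n\preceq_{\bf V}f_m$ holds. First I would dispose of the easy direction, ruling out $f_n\preceq_{\bf V}f_m$. Each $f_n$ is symmetric and non-constant, so all $n$ of its variables are essential: flipping a single coordinate of the all-zero vector raises the weight from $0$ to $1$, changing the value from $0$ to $1$, and the same works at any coordinate by symmetry. Hence $f_n$ has essential arity exactly $n$. Since, as recalled in Section~2, $g\preceq_{\bf V}f$ forces the essential arity of $g$ to be at most that of $f$, and since $f_m$ has essential arity $m<n$, we cannot have $f_n\preceq_{\bf V}f_m$.

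For the reverse direction I would argue by contradiction, assuming $f_m\preceq_{\bf V}f_n$. By definition there are $m$-ary projections $p_1,\dots,p_n$ with $f_m=f_n(p_1,\dots,p_n)$; equivalently there is a map $\sigma\colon\{1,\dots,n\}\to\{1,\dots,m\}$ such that
\[
f_m(x_1,\dots,x_m)=f_n(x_{\sigma(1)},\dots,x_{\sigma(n)})\quad\text{for all }(x_1,\dots,x_m)\in\mathbb{B}^m.
\]
The map $\sigma$ must be surjective, for otherwise some $x_k$ would be a dummy variable of the right-hand side, contradicting that every variable of $f_m$ is essential. Writing $b_j=\lvert\sigma^{-1}(j)\rvert$ for the block sizes, I obtain positive integers with $\sum_{j=1}^m b_j=n$, and since $m<n$ at least one block satisfies $b_j\geq 2$.

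The key step exploits the symmetry of $f_n$. For an input $x$ with support $S=\{j:x_j=1\}$, the substituted vector has weight $\sum_{j\in S}b_j$, so the right-hand side equals $1$ exactly when $\sum_{j\in S}b_j\in\{1,n-1\}$, whereas $f_m(x)=1$ exactly when $\lvert S\rvert\in\{1,m-1\}$. Probing with the singleton sets $S=\{j\}$, for which $f_m=1$, forces $b_j\in\{1,n-1\}$ for every $j$. A block of size $n-1$ is impossible once $m\geq 3$: the remaining $m-1\geq 2$ blocks would then have total size $1$, which is absurd since each is at least $1$. Hence every $b_j=1$, giving $n=\sum_j b_j=m$ and contradicting $m<n$. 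This rules out $f_m\preceq_{\bf V}f_n$ and, together with the first paragraph, shows $(f_n)_{n\geq 4}$ is an antichain.

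The hard part is the reverse direction, where $f_m$ has the \emph{smaller} essential arity and could a priori be recovered from $f_n$ by identifying variables, so the real work is to show that no identification pattern reproduces $f_m$. I expect the reformulation in terms of the block sizes $b_j$ together with the single probe $S=\{j\}$ to dissolve this obstacle cleanly, with the hypothesis $m\geq 3$ (guaranteed by $n\geq 4$) being exactly what forbids the degenerate size-$(n-1)$ block.
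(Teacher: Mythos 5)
Your proof is correct, and it takes a genuinely different route from the paper, because the paper offers no proof of Lemma~\ref{lemma1} at all: it disposes of it with the remark that it is a particular case of Proposition 3.4 in \cite{Pi2}. Your argument is therefore a self-contained replacement for that citation. It is, moreover, close in spirit to the proof the author \emph{does} write out for the companion family $(g_n)_{n\geq 4}$ of Lemma~\ref{lemma2}: there, too, one assumes $g_m=g_n(p_1,\ldots ,p_n)$ with $m<n$, extracts a repeated projection (your block of size $b_j\geq 2$) together with a projection distinct from it, and exhibits probe inputs on which the two sides disagree. Your bookkeeping via the surjection $\sigma$ and the block sizes $b_1,\ldots ,b_m$ is cleaner and more quantitative: since $f_n$ is symmetric, the value of the composite at a vector with support $S$ depends only on $\sum_{j\in S}b_j$, so the singleton probes alone force $b_j\in \{1,n-1\}$ for every $j$, and a simple count ($m-1\geq 2$ remaining blocks, each of size at least $1$, cannot sum to $1$) kills the case $b_j=n-1$; this pins down exactly why every identification pattern fails, rather than just exhibiting one bad input as in the paper's Lemma~\ref{lemma2} argument. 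Two small points of hygiene: the fact you invoke from Section 2 is stated there only in the strict form (if $g\preceq _{\bf V}f$ and $f\not\preceq _{\bf V}g$, then the essential arity strictly drops), whereas what you use is the weak monotonicity that substituting projections never increases essential arity -- this is the standard fact underlying the paper's remark and is immediate from the definition, but you should say so rather than attribute it verbatim; and the hypothesis $m\geq 3$ is guaranteed because $m$ itself indexes the family, so $m\geq 4$, not because $n\geq 4$.
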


\begin{lemma}\label{lemma2}
The family $(g_n)_{n\geq 4}$ of constant-preserving Boolean functions, given by 
\begin{displaymath}
g_n(x_1,\ldots ,x_n)=\left\{
             \begin{array}{ll}
                     0         &       \mbox{if $\#\{i:x_i=0\}\in \{1,n\}$}   \\
                     1         &       \mbox{otherwise}.                
             \end{array}\right.
 \end{displaymath} 
constitutes an (infinite) antichain of Boolean functions.
\end{lemma}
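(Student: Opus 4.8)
The plan is to read off the combinatorial behaviour of $g_n$ and then argue directly about substitutions of projections into $g_n$. First I would note that $g_n(\mathbf a)$ depends only on the number of zero coordinates $Z=\#\{i:a_i=0\}$ of $\mathbf a$, being $0$ exactly when $Z\in\{1,n\}$ and $1$ otherwise. In particular $g_n({\bf 0})=0$ and $g_n({\bf 1})=1$, so $g_n$ is constant-preserving, while flipping one coordinate of ${\bf 1}$ from $1$ to $0$ changes the value from $1$ to $0$; hence every variable of $g_n$ is essential and the essential arity of $g_n$ is exactly $n$. Fixing $m\neq n$ with (without loss of generality) $m<n$, it then suffices to establish the two incomparabilities $g_n\not\preceq_{\bf V}g_m$ and $g_m\not\preceq_{\bf V}g_n$.

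The first is immediate from the observation in Section~2 that simple variable substitution cannot increase essential arity: $g_n\preceq_{\bf V}g_m$ would force the essential arity $n$ of $g_n$ to be at most the essential arity $m$ of $g_m$, contradicting $m<n$. For the converse I would argue by contradiction. Assume $g_m\preceq_{\bf V}g_n$, so that $g_m=g_n(p_1,\dots,p_n)$ for some $m$-ary projections $p_k=x_{\sigma(k)}$, where $\sigma\colon\{1,\dots,n\}\to\{1,\dots,m\}$. Since every variable of $g_m$ is essential, $\sigma$ must be surjective; writing $c_j=\#\sigma^{-1}(j)\ge 1$ for the fibre sizes we then have $\sum_{j=1}^m c_j=n>m$, so by pigeonhole some fibre satisfies $c_{j_0}\ge 2$.

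The crux is to test this at the input $\mathbf a$ whose only zero coordinate is the $j_0$-th. Here $g_m(\mathbf a)=0$ because $\mathbf a$ has exactly one zero, whereas substituting through $\sigma$ produces a tuple whose zero coordinates are exactly those $k$ with $\sigma(k)=j_0$, i.e. $c_{j_0}$ of them; thus the composite equals $g_n$ evaluated at a point with $Z=c_{j_0}$ zeros. Since the remaining $m-1\ge 1$ fibres each contribute at least one nonzero coordinate, $2\le c_{j_0}\le n-1$, whence $g_n(\dots)=1\neq 0=g_m(\mathbf a)$, the desired contradiction. I expect the only delicate point to be this boundary bookkeeping: verifying that the substituted zero-count lands strictly inside $\{2,\dots,n-1\}$ (which is precisely where the facts $c_{j_0}\ge 2$ and $m\ge 2$ enter), after having reduced everything to a surjective $\sigma$ via essentiality of the variables of $g_m$.
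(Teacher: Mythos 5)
Your proof is correct and is essentially the paper's own argument: one direction follows from the fact that simple variable substitution cannot increase essential arity, and the other from pigeonhole (some projection repeated, i.e.\ a fibre $\sigma^{-1}(j_0)$ of size at least $2$) together with essentiality of all variables of $g_m$ forcing surjectivity. Your single test point --- the tuple whose unique zero lies in the repeated fibre --- is exactly the witnessing input inside the paper's set $P$: the substituted $n$-tuple then has at least two zeros and at least one $1$, so the composite returns $1$ while $g_m$ returns $0$.
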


\begin{proof} To prove the lemma, it is enough to show that if $m\not=n$, then 
$g_m\not\preceq _{\bf V}g_n$. 
 
 By definition, $g_m$ and $g_n$ cannot have dummy variables, and hence,
 $g_m\not\preceq _{\bf V}g_n$, whenever $m>n$.
 So suppose that $m<n$.  
Note that for every $t\geq 4$, $g_t$ is constant with value 1
 on all $t$-tuples with at least two zeros and at least one 1.
For a contradiction,
 suppose that $g_m\preceq _{\bf V}g_n$, i.e.
there are $m$-ary projections $p_1,\ldots ,p_n\in I_c$ such that 
$g_m=g_n(p_1,\ldots ,p_n)$. Since every variable of $g_m$ is essential in $g_n$ and $m<n$,
 it follows that there are 
 at least two indices $1\leq i<j\leq n$ such that $p_i=p_j$. Also, since $4\leq m$, there is at least
 one index $1\leq k\leq n$ such that $p_k\not=p_i=p_j$. Now, consider the set $P$ of all 
$m$-tuples $(a_1,\ldots ,a_m)$ such that $p_i(a_1,\ldots ,a_m)=p_j(a_1,\ldots ,a_m)=0$, and 
$p_k(a_1,\ldots ,a_m)=1$.
 Clearly, $g_m$ is not constant because $P$ contains an $m$-tuple with exactly one 0,
 and an $m$-tuple with two 0's. 
But $g_n$ is constant 
with value 1 on all $n$-tuples of the form $(p_1(a_1,\ldots ,a_m),\ldots ,p_n(a_1,\ldots ,a_m))$, for 
$(a_1,\ldots ,a_m)\in P$,
 because all 
$n$-tuples of this form have at least two 0's and at least one 1, which yields  the desired contradiction. 
\end{proof}

\begin{lemma}\label{lemma3}
Let $(f_n)_{n\geq 4}$ and $(g_n)_{n\geq 4}$ be the families of Boolean functions
given above, and consider the families $(u_n)_{n\geq 4}$ and $(t^u_n)_{n\geq 4}$ defined by  
 \begin{displaymath} 
\begin{array}{llll}
    u_n(x_0,x_1,\ldots ,x_n) =x_0\wedge f_n(x_1,\ldots ,x_n)     \\
    t^u_n(x_0,x_1,\ldots ,x_n) =x_0\wedge g_n(x_1,\ldots ,x_n)     \\
             \end{array}
 \end{displaymath} 
 Each of $(u_n)_{n\geq 4}$ and $(t^u_n)_{n\geq 4}$ constitutes an (infinite) antichain of Boolean functions.
\end{lemma}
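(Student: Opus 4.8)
The plan is to prove both claims by a single two-part scheme that reduces each to the antichain properties of the symmetric families $(f_n)$ and $(g_n)$ already established in Lemmas~\ref{lemma1} and~\ref{lemma2}. First I would record that, since $f_n$ and $g_n$ have no dummy variables, neither does $u_n=x_0\wedge f_n$ nor $t^u_n=x_0\wedge g_n$: the variable $x_0$ is essential because $f_n$ (resp.\ $g_n$) attains the value $1$, and each $x_i$ with $i\geq 1$ is essential because the restriction $x_0=1$ recovers $f_n$ (resp.\ $g_n$). Thus $u_n$ and $t^u_n$ have essential arity exactly $n+1$. For any pair $m<n$ this settles one direction at once, since a simple variable substitution cannot raise essential arity: as the essential arity of $u_n$ is $n+1>m+1$, we have $u_n\not\preceq_{\bf V}u_m$. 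It remains to prove the reverse inequality $u_m\not\preceq_{\bf V}u_n$ (and likewise for $t^u$).

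Assume then $m<n$ and, for a contradiction, $u_m=u_n(p_0,\dots,p_n)$ for $(m+1)$-ary projections $p_0,\dots,p_n$. The first key step is to show $p_0=x_0$. The left-hand side vanishes on the whole half-space $x_0=0$, whereas the right-hand side vanishes on the half-space $p_0=0$; if $p_0=x_{j_0}$ with $j_0\geq 1$, then keeping $x_{j_0}=0$ while setting $x_0=1$ and a single other coordinate to $1$ gives $f_m=1$ and hence $u_m=1$ there (this uses $m\geq 4$), contradicting that the right-hand side is $0$. With $p_0=x_0$ fixed, restricting to $x_0=1$ yields the identity $f_m(x_1,\dots,x_m)=f_n(q_1,\dots,q_n)$, in which each $q_i$ is either one of $x_1,\dots,x_m$ or the constant $1$ (the latter exactly when $p_i=x_0$). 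Because $f_n$ is symmetric, its value here depends only on the number of ones among the $q_i$, namely on $c+\sum_{j=1}^m v_jx_j$, where $c=\#\{i:q_i=1\}$ and $v_j=\#\{i:q_i=x_j\}$ satisfy $c+\sum_j v_j=n$.

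The contradiction is then extracted by evaluating this weighted identity at a few inputs, in the spirit of the proof of Lemma~\ref{lemma2}. Feeding in the inputs of Hamming weight $m-1$ shows that $n-v_{j_0}\in\{1,n-1\}$, i.e.\ $v_{j_0}\in\{1,n-1\}$, for every $j_0$; the constraint $\sum_j v_j\leq n$ together with $m\geq 4$ excludes the value $n-1$, forcing every $v_j=1$ and hence $c=n-m$. Feeding in the weight-$1$ inputs then requires $c+1=n-m+1\in\{1,n-1\}$, which is impossible for $4\leq m<n$. This proves that $(u_n)_{n\geq 4}$ is an antichain. The family $(t^u_n)_{n\geq 4}$ is handled by the same argument using the symmetry of $g_n$: one again forces $p_0=x_0$, obtains all $v_j=1$ and $c=n-m$ from the weight-$(m-1)$ inputs (now using that $g_n=0$ precisely at weights $0$ and $n-1$), and the all-zero input finally demands $n-m\in\{0,n-1\}$, again impossible for $4\leq m<n$.

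I expect the main obstacle to be the passage from $u_m\preceq_{\bf V}u_n$ to a statement about $f_n$: setting $x_0=1$ does \emph{not} exhibit $f_m$ as a simple variable substitution of $f_n$, because projections that land on the distinguished coordinate $0$ degenerate into the constant $1$. Consequently one cannot simply invoke Lemmas~\ref{lemma1} and~\ref{lemma2}. The two ingredients that circumvent this are the identification $p_0=x_0$, which keeps both sides zero on the same half-space and confines the spurious constants to the value $1$, and the symmetry of $f_n$ and $g_n$, which converts the resulting functional identity into a purely arithmetic condition on the multiplicities $c,v_1,\dots,v_m$; once these are in place, the evaluation-point bookkeeping is routine.
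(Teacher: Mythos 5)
Your proof is correct, and while it shares its skeleton with the paper's argument, the core of the main case is handled by a genuinely different mechanism. Both proofs dispose of the direction $m>n$ by essential-arity counting, and both handle $m<n$ by analyzing the projection $p_0$, with the subcase $p_0=x_j$, $j\geq 1$, killed by the same weight-one evaluation. But for the main case $p_0=x_0$ the paper splits further according to whether $x_0$ occurs among $p_1,\ldots ,p_n$: if it does not, setting $x_0=1$ exhibits $f_m\preceq _{\bf V}f_n$ and the contradiction is delegated to Lemma~\ref{lemma1}; if it does, an ad hoc evaluation (ones exactly in positions $0$ and $1$) lands $f_n$ on a vector with between $2$ and $n-2$ ones, forcing $1=0$. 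You instead treat the whole case $p_0=x_0$ uniformly: the symmetry of $f_n$ converts the identity $f_m=f_n(q_1,\ldots ,q_n)$ into arithmetic on the multiplicities $c,v_1,\ldots ,v_m$, and the evaluations at weights $m-1$ and $1$ produce incompatible constraints (all $v_j=1$ and $c=n-m$, yet $n-m+1\in \{1,n-1\}$ fails for $4\leq m<n$). Your route buys two things: it is self-contained---despite your opening sentence, it never actually invokes the conclusions of Lemmas~\ref{lemma1} and~\ref{lemma2}, only the definitions and symmetry of $f_n$ and $g_n$---and it transfers essentially verbatim to $(t^u_n)$, for which you supply the arithmetic where the paper only says the proof proceeds ``similarly''. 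The paper's route buys portability: its reduction subcase works for an arbitrary antichain in place of $(f_n)$, symmetric or not, whereas your multiplicity bookkeeping is tied to symmetry. One small inaccuracy worth noting: forcing $p_0=x_0$ needs only $m\geq 2$, not $m\geq 4$; where $m\geq 4$ is genuinely used in your argument is in excluding $v_{j_0}=n-1$ and in ruling out $n-m+1=n-1$.
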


\begin{proof} We only prove that the lemma holds for the family $(u_n)_{n\geq 4}$.
The remaining claim can be shown to hold, by proceeding similarly.

We show that if $m\not=n$, then
$u_m\not\leq u_n$.
 By definition, $u_m$ and $u_n$ cannot have dummy variables. Therefore,
 $u_m\not\leq u_n$, whenever $m>n$.

 So assume that $m<n$, and for a contradiction, suppose that $u_m\preceq _{\bf V} u_n$, i.e.
there are $m+1$-ary projections $p_0,\ldots ,p_n\in I_c$ such that
$u_m=u_n(p_0,\ldots ,p_n)$. Note that for every $m\geq 4$,
$u_m(1,x_1\ldots ,x_m)=f_m(x_1\ldots ,x_m)$ and $u_m(0,x_1\ldots ,x_m)$ is the constant 0.

Now, suppose that $p_0(x_0,\ldots ,x_m)=x_0$. If for all $k\in {\bf n}$, $p_k(x_0,\ldots ,x_m)\not=x_0$,
then by taking $x_0=1$ we would conclude that $f_m\preceq _{\bf V} f_n$,
contradicting Lemma \ref{lemma1}. 

Suppose that there is $k\in {\bf n}$ such that $p_k(x_0,\ldots ,x_m)=x_0$. From the fact that each variable of $u_m$ is essential, it follows that for each $ j\in {\bf m}$ there is $l\in {\bf n}$ such that
 $p_l(x_0,\ldots ,x_m)=x_j$. Hence, by taking $a_i=1$ if and only if $i=0,1$, we have that the vector $(p_1(a_0,\ldots  ,a_m),\ldots ,p_n(a_0,\ldots  ,a_m))$ has at least $2$ and at most $n-2$ components equal to $1$ and thus
 \begin{displaymath}
 \begin{array}{ll}
u_m(a_0,\ldots  ,a_m)=1\not=0=
u_n(a_0,p_1(a_0,\ldots  ,a_m),\ldots ,p_n(a_0,\ldots  ,a_m))
 \end{array}
 \end{displaymath}
which is also a contradiction.

Hence, $p_0(x_0,\ldots ,x_m)\not=x_0$, say $p_0(x_0,\ldots ,x_m)=x_j$ for $j\in {\bf m}$. But then by taking $a_i=1$ if and only if $i=0,k$, for some $k\in {\bf m}$ such that $k\not=j$, we would have
\begin{displaymath}
 \begin{array}{ll}
u_m(a_0,a_1,\ldots a_{k-1},a_k,a_{k+1},\ldots  ,a_m)=u_m(1,0,\ldots,0 ,1, 0,\ldots ,0)=\\
f_m(0,\ldots,0 ,1,0,\ldots ,0)=1\not=
0=\\
u_n(0,p_1(1,0,\ldots,0 ,1, 0,\ldots ,0),\ldots ,p_n(1,0,\ldots,0 ,1, 0,\ldots ,0))=\\
u_n(a_j,p_1(a_0,\ldots  ,a_m),\ldots ,p_n(a_0,\ldots ,a_m))
 \end{array}
 \end{displaymath}
 which contradicts our assumption $u_m\preceq _{\bf V}u_n$.    
\end{proof}

A \emph{hypergraph} is an ordered pair $G=(V,E)$, where $V=V(G)$ is a non-empty finite set 
(called the \emph{set of vertices} of $G$), and $E=E(G)$ is a set of subsets of $V$
 (called the \emph{set of hyperedges} of $G$). Without loss of generality, we assume that our 
 hypergraphs $G$ have set of vertices $V(G)={\bf n}=\{1,\ldots ,n\}$, for some positive integer $n$.  
Examples of hypergraphs are the \emph{complete graphs} $K_n$, $n\geq 2$, whose set of vertices is 
$V(K_n)={\bf n}$ and whose set of hyperedges is the set of all 2-element subsets of $V(K_n)$, i.e. 
$E(K_n)=\{\{i,j\}:i,j\in V(K_n), i\not=j\}$.
To each hypergraph $G$, say $V(G)={\bf n}$, we associate an $n$-ary monotone
 Boolean function $f_G$ whose DNF is given by 
 $$ f_G=\bigvee _{I\in E(G)}\bigwedge _{i\in I}x_i$$
Note that every monotone Boolean function is associated with some hypergraph. 

Given two hypergraphs $G$ and $H$, a \emph{homomorphism} $h$ from $G$ to $H$ is any mapping
$h: V (G)\rightarrow   V (H)$ satisfying the condition:
if $I \in  E(G)$, then $h(I)=\{h(i):i\in I\} \in  E(H)$.
A homomorphism $h: V (G)\rightarrow   V (H)$ is said to be \emph{hyperedge-surjective} if for each 
$J \in  E(H)$, there is $I \in  E(G)$ such that $I= h^{-1}(J)$.

The following lemma provides a characterization of $\preceq _{\bf V} $ restricted
 to the clone $M$ of monotone Boolean functions.
\begin{lemma}\label{lemma4} 
Let $G$ and $H$ be two hypergraphs, and consider the functions $f_G$ and $f_H$ associated with $G$ and $H$, respectively.
Then there is a hyperedge-surjective homomorphism $f: V (G)\rightarrow  V (H)$ if and only if 
$f_H\preceq _{\bf V} f_G$.
\end{lemma}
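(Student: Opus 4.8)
The plan is to translate $f_H\preceq_{\bf V}f_G$ into a condition on maps $V(G)\to V(H)$ and then read the homomorphism off the disjunctive normal forms. By definition $f_H\preceq_{\bf V}f_G$ means that $f_H$ is obtained from $f_G$ by a simple variable substitution; encoding the substituted projections by a single map $h\colon V(G)\to V(H)$, and writing $n=\lvert V(G)\rvert$, $m=\lvert V(H)\rvert$, this is the identity
\begin{displaymath}
f_H(x_1,\ldots ,x_m)=f_G(x_{h(1)},\ldots ,x_{h(n)}).
\end{displaymath}
Substituting into the DNF of $f_G$ and using that $\wedge$ is idempotent and commutative, the right-hand side becomes $\bigvee_{I\in E(G)}\bigwedge_{j\in h(I)}x_j$, because $\bigwedge_{i\in I}x_{h(i)}$ depends only on the image set $h(I)=\{h(i):i\in I\}$. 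Hence the lemma reduces to comparing this monotone function with $f_H=\bigvee_{J\in E(H)}\bigwedge_{j\in J}x_j$, and throughout I would use the standard fact that a disjunction of conjunctions is determined by the inclusion-minimal index sets occurring in it (its minimal true points).

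To pass from a hyperedge-surjective homomorphism $h$ to $f_H\preceq_{\bf V}f_G$, I would take the substitution induced by $h$ and compare the two DNFs term by term. Since $h$ is a homomorphism, every image $h(I)$ is a hyperedge of $H$, so each conjunct $\bigwedge_{j\in h(I)}x_j$ of $f_G(x_{h(1)},\ldots ,x_{h(n)})$ is already a conjunct of $f_H$, giving $f_G(x_{h(1)},\ldots ,x_{h(n)})\leq f_H$. Since $h$ is hyperedge-surjective, every hyperedge $J\in E(H)$ is realized as $h(I)$ for some $I\in E(G)$, so each conjunct of $f_H$ occurs among those of $f_G(x_{h(1)},\ldots ,x_{h(n)})$, giving the reverse inequality. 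The two functions coincide, so $f_H\preceq_{\bf V}f_G$.

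For the converse I would recover $h$ from a substitution witnessing $f_H\preceq_{\bf V}f_G$ by evaluating the identity above at suitable characteristic vectors. Evaluating at the characteristic vector of an image set $h(I)$, $I\in E(G)$, forces $f_H$ to take value $1$ there, so $h(I)$ contains some $J\in E(H)$; evaluating at the characteristic vector of a hyperedge $J\in E(H)$ forces some $h(I)$ to be contained in $J$. Taken together these two facts state exactly that the inclusion-minimal members of $\{h(I):I\in E(G)\}$ are precisely the hyperedges of $H$; since an image $h(I)$ has at most $\lvert I\rvert$ elements yet contains a hyperedge, this minimality pins each $h(I)$ down to being a hyperedge of $H$, and shows that every hyperedge of $H$ is realized as some $h(I)$. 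Thus $h$ is a hyperedge-surjective homomorphism.

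The delicate step is this last passage. The functional identity only guarantees that the image sets $h(I)$ and the hyperedges of $H$ generate the same monotone function, i.e.\ have the same minimal true points; it does not on its own say that each $h(I)$ is a hyperedge rather than a larger set lying above one, nor that $h$ never collapses a hyperedge to a proper subset. Ruling out such oversized or collapsed images is exactly where one must use that the hyperedges in play are minimal true points (and of controlled size), and this is what forces the matching of conjuncts that makes $h$ an honest hyperedge-surjective homomorphism.
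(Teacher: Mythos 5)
Your forward direction is correct and is essentially the paper's own argument: read off the substitution from $h$, rewrite $f_G(x_{h(1)},\ldots,x_{h(n)})$ as $\bigvee_{I\in E(G)}\bigwedge_{j\in h(I)}x_j$, and use the two hypotheses on $h$ to identify this with $f_H$. The genuine gap is in the converse, and it is exactly the step you flag in your closing paragraph but never close. Your characteristic-vector evaluations correctly show that the inclusion-minimal sets among $\{h(I):I\in E(G)\}$ coincide with the minimal hyperedges of $H$; but the next assertion, that this ``pins each $h(I)$ down to being a hyperedge of $H$,'' is a non sequitur, and it is false for the witnessing map in general. Take $V(G)=\{1,\ldots,5\}$, $E(G)=\{\{1,2,3\},\{4,5\}\}$, $V(H)=\{1,2,3\}$, $E(H)=\{\{1,2\}\}$, and the substitution $h(1)=1$, $h(2)=2$, $h(3)=3$, $h(4)=1$, $h(5)=2$. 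Then
\begin{displaymath}
\bigvee_{I\in E(G)}\bigwedge_{j\in h(I)}x_j=(x_1\wedge x_2\wedge x_3)\vee(x_1\wedge x_2)=x_1\wedge x_2=f_H,
\end{displaymath}
so this $h$ witnesses $f_H\preceq _{\bf V}f_G$, yet $h(\{1,2,3\})=\{1,2,3\}\notin E(H)$: the witnessing map is not a homomorphism. Equality of monotone functions really does tolerate oversized images, so any correct argument must either repair the map or match conjuncts under an additional hypothesis. Note also that you never verify the paper's actual definition of hyperedge-surjectivity, which demands the exact preimage condition $I=h^{-1}(J)$; your evaluation at the characteristic vector of $J$ only produces some $I$ with $h(I)\subseteq J$, which is weaker.

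In fact, in the example above no hyperedge-surjective homomorphism in the paper's preimage sense exists at all: any homomorphism must send all five vertices of $G$ into $\{1,2\}$, whence $h^{-1}(\{1,2\})=\{1,2,3,4,5\}\notin E(G)$, even though $f_H\preceq _{\bf V}f_G$. So the step you left open cannot be filled at the stated level of generality; it needs either a reading of hyperedge-surjectivity as ``every $J\in E(H)$ equals $h(I)$ for some $I\in E(G)$'' combined with the assumption that $E(G)$ and $E(H)$ are antichains under inclusion (which holds for every hypergraph to which the paper applies the lemma, namely complete graphs and the hypergraphs in Lemma 5), or some equivalent restriction. For comparison, the paper's proof follows exactly your route --- defining $h(i)=j$ if and only if $p_i=x_j$ --- but then identifies the conjuncts of the two DNF expressions syntactically, implicitly treating the hypergraph DNF as canonical; your closing paragraph correctly locates the point where that identification needs justification, but locating the gap is not closing it, so the converse direction of your proposal is incomplete.
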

\begin{proof} Let $V (G)=\bf n$ and $V (H)=\bf m$. Assume first that there is a hyperedge-surjective
 homomorphism $h: V (G)\rightarrow   V (H)$.
Define $m$-ary projections $p_1,\ldots ,p_n\in I_c$ by
$p_i=x_j$ if and only if $h(i)=j$. Consider the $m$-ary function $g$ given by 
$g=f_G(p_1,\ldots ,p_n)$. Note that
 \begin{displaymath}
g= \bigvee _{ I\in E(G) } \bigwedge _{ i\in I } p_i =\bigvee _{ I\in E(G) } \bigwedge _{ j\in h(I) } x_j 
 \end{displaymath} 
Now, since $h$ is a hyperedge-surjective homomorphism, we have that for each $I\in E(G)$, $h(I)\in E(H)$, and that 
every $J\in E(H)$ is of the form $h(I)$, for some $I\in E(G)$. Also, both $\vee $ and $\wedge $ are 
associative and idempotent operations, and thus
 \begin{displaymath}
g=\bigvee _{I\in E(G)} \bigwedge _{j\in h(I)} x_j = \bigvee _{J\in E(H)} \bigwedge _{j\in J} x_j =f_H
 \end{displaymath} 
In other words, $f_H\preceq _{\bf V} f_G$. 

Now, suppose that $f_H\preceq _{\bf V} f_G$, i.e. there are $m$-ary projections $p_1,\ldots ,p_n\in I_c$ 
such that $f_H=f_G(p_1,\ldots ,p_n)$.  Let $h$ be the map $h: V (G)\rightarrow   V (H)$ satisfying 
$h(i)=j$ if and only if $p_i=x_j$. We claim that $h$ is a homomorphism.
Indeed, if $I \in  E(G)$, then  $\underset{i\in I}{\bigwedge} x_i$ is conjunct of $f_G$, and thus 
$\underset{i\in I}{\bigwedge}x_{h(i)}=\underset{j\in h(I)}{\bigwedge}x_{j}$ is a conjunct of $f_H$.
By definition of $f_H$, we have that $h(I)\in E(H)$.
To see that $h$ is hyperedge-surjective, suppose that $J \in  E(H)$. Then $\underset{j\in J}{\bigwedge}x_{j}$   
is a conjunct of $f_H$. By construction, we have that there is $I\subseteq V(G)$ such that $I=h^{-1}(J)$ and
$\underset{i\in I}{\bigwedge} x_i$ is a conjunct of $f_G$. By definition of $f_G$, it follows that $I \in  E(G)$, 
and the proof of the lemma is complete. 
\end{proof}

\begin{lemma}\label{lemma5}
The family $(H_n)_{n\geq 2}$ of constant-preserving monotone Boolean functions given by 
\begin{displaymath}
H_n(x_1,\ldots ,x_n)= \bigvee _{1\leq i<j\leq n}x_i\wedge x_j. 
 \end{displaymath} 
constitutes an (infinite) antichain of Boolean functions.
 Furthermore, for each $n\geq 2$, the family $(G^n_m)_{m\geq n}$ of 
composites
\begin{displaymath}
G^n_m(x_1,\ldots ,x_{m+n-1})=H_n(x_1,\ldots ,x_{n-1},H_m(x_n,\ldots ,x_{m+n-1})) 
 \end{displaymath}
 also constitutes an (infinite) antichain of Boolean functions.
\end{lemma}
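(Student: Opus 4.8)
The first assertion I would deduce directly from Lemma~\ref{lemma4}, reading $H_n$ as a hypergraph function. First I would note that $H_n$ is precisely the function $f_{K_n}$ associated with the complete graph $K_n$, since $E(K_n)$ is the set of all pairs $\{i,j\}$ with $i\neq j$. By Lemma~\ref{lemma4}, for $m\neq n$ we have $H_m\preceq_{\bf V}H_n$ if and only if there is a hyperedge-surjective homomorphism $h\colon K_n\to K_m$. I would then observe that a map $h\colon K_n\to K_m$ is a homomorphism exactly when it sends distinct vertices to distinct vertices (every pair is an edge and edges have size two), i.e. exactly when $h$ is injective; and that $h$ is hyperedge-surjective exactly when every pair $\{k,l\}$ of the target equals $h^{-1}$ of some edge, which for injective $h$ means both $k$ and $l$ are hit. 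Ranging over all pairs, this is surjectivity, so an injective and surjective map $K_n\to K_m$ exists only when $n=m$. Thus $H_m\not\preceq_{\bf V}H_n$ whenever $m\neq n$, and $(H_n)_{n\geq 2}$ is an antichain.

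For the second assertion I would fix $n\geq 2$ and translate $G^n_m$ into its prime-implicant hypergraph $\mathcal{G}^n_m$. Writing $P=\{1,\dots,n-1\}$ and $Q=\{n,\dots,m+n-1\}$, so that the vertex set is $P\cup Q=\{1,\dots,m+n-1\}$, I would expand the composite as $G^n_m=H_{n-1}(x_1,\dots,x_{n-1})\vee\bigl((\bigvee_{i\in P}x_i)\wedge H_m(x_n,\dots,x_{m+n-1})\bigr)$; the resulting terms are the $2$-element sets $\{i,j\}\subseteq P$ and the $3$-element sets $\{i,k,l\}$ with $i\in P$ and $\{k,l\}\subseteq Q$. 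A short verification that no term absorbs another confirms these are exactly the prime implicants, so $\mathcal{G}^n_m$ carries the complete graph $K_{n-1}$ on $P$ as its $2$-edges and the triples $\{i\}\cup\{k,l\}$ ($i\in P$, $\{k,l\}\subseteq Q$) as its $3$-edges. Since $G^n_m$ is monotone, Lemma~\ref{lemma4} reduces the antichain claim to the statement that, for $m,m'\geq n$, a hyperedge-surjective homomorphism $\mathcal{G}^n_m\to\mathcal{G}^n_{m'}$ exists only when $m=m'$.

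For $n\geq 3$ I would run the following chain of deductions on such an $h$, with $P',Q'$ the vertex blocks of the target. Each $2$-edge $\{i,j\}\subseteq P$ has image $\{h(i),h(j)\}$, which cannot be a singleton and hence is a $2$-edge, so $h(i)\neq h(j)$ and $h(i),h(j)\in P'$; as $P$ is a clique and $|P|=|P'|=n-1$, this makes $h|_P$ a bijection $P\to P'$. Next, for a $2$-edge $J'\subseteq P'$ the set $h^{-1}(J')$ is a source edge containing the two distinct vertices $(h|_P)^{-1}(J')$ of $P$; since the only source edges with two vertices in $P$ are the $2$-edges, $h^{-1}(J')$ is exactly that $2$-edge, so no vertex of $Q$ maps into $J'$, and ranging over all $2$-edges (which cover $P'$) gives $h(Q)\subseteq Q'$. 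Then each pair $\{k,l\}\subseteq Q$ extends to a $3$-edge $\{i,k,l\}$ whose image $\{h(i),h(k),h(l)\}$ has $h(i)\in P'$ and $h(k),h(l)\in Q'$; as a two-element set straddling $P'$ and $Q'$ is never an edge, the image must be a genuine $3$-edge, forcing $h(k)\neq h(l)$, so $h|_Q$ is injective and $m\leq m'$. Finally, hyperedge-surjectivity applied to the $3$-edges of the target shows every vertex of $Q'$ lies in the image of $h|_Q$, whence $h|_Q$ is a bijection and $m=m'$.

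It remains to treat $n=2$, where $\mathcal{G}^2_m$ has no $2$-edges and its edges are the triples $\{1\}\cup\{k,l\}$ with $\{k,l\}\subseteq Q=\{2,\dots,m+1\}$, so that vertex $1$ lies in every edge. Here the plan is to locate the image of this ``apex'': by hyperedge-surjectivity, $h^{-1}(J')$ is a source edge, hence contains $1$, for every target edge $J'$, so $h(1)$ lies in every edge of $\mathcal{G}^2_{m'}$. When $m'\geq 3$ the unique such vertex is the apex of the target, giving $h(1)=1$, after which the straddling argument of the previous paragraph (applied to the pairs $\{k,l\}\subseteq Q$ and their images) again makes $h|_Q$ a bijection and $m=m'$; the degenerate case $m'=2$ is immediate, since the single target edge forces $h^{-1}$ of it to be all of $V(\mathcal{G}^2_m)$, whence $m+1=3$. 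Combining the two cases, a hyperedge-surjective homomorphism $\mathcal{G}^n_m\to\mathcal{G}^n_{m'}$ forces $m=m'$, so by Lemma~\ref{lemma4} the $G^n_m$ ($m\geq n$) are pairwise incomparable. The point I expect to require the most care is exactly this case $n=2$: lacking the $2$-edges that pin down the core when $n\geq 3$, one must instead exploit the apex vertex and handle the smallest value $m'=2$ separately.
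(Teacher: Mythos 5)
Your proposal is correct, and it follows the paper's overall strategy: both arguments reduce the lemma, via Lemma~\ref{lemma4}, to the non-existence of hyperedge-surjective homomorphisms between the associated hypergraphs, identify $H_n$ with the complete graph $K_n$, and compute the same hypergraph for $G^n_m$ (the complete graph $K_{n-1}$ on $P=\{1,\dots,n-1\}$ together with the triples $\{i,k,l\}$, $i\in P$, $\{k,l\}\subseteq Q$). Where you diverge is the combinatorial endgame. The paper splits on which arity is larger: for $m_1>m_2$ it counts edges (the target has strictly more $3$-edges, while hyperedge-surjectivity forces the assignment $J\mapsto h^{-1}(J)$ to inject the target's edges into the source's), and for $m_1<m_2$ it shows that $2$-edges map bijectively to $2$-edges and $3$-edges injectively to $3$-edges, then invokes pigeonhole; that argument is uniform in $n\geq 2$. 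You instead prove a structural rigidity statement: any hyperedge-surjective homomorphism must carry $P$ bijectively onto $P'$ and $Q$ bijectively onto $Q'$, hence $m=m'$. This treats both directions of comparison at once and gives slightly more information (the homomorphism essentially preserves the block structure), but it forces the separate treatment of $n=2$, where the absence of $2$-edges breaks your first two steps; you handle that case correctly via the apex vertex lying in every edge, including the degenerate target $m'=2$. Both routes are sound; the paper's counting argument is shorter and needs no case split on $n$, while yours replaces the two counting cases by a single structural analysis.
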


\begin{proof} To see that the first claim of the lemma holds, observe that for each $n\geq 2$, $H_n$ is
the $n$-ary function associated with the complete graph $K_n$. Since there is no 
hyperedge-surjective homomorphism between $K_m$ and $K_n$,
 whenever $m\not=n$, by Lemma~\ref{lemma4} it follows that  
$H_m$ and $H_n$ are incomparable, whenever $m\not=n$.

To prove that the second claim of the lemma also holds, we show that if $m_1\not=m_2$, then 
$G^n_{m_1} \not\preceq _{\bf V}G^n_{m_2}$. Note first that each 
$G^n_{m}$ is associated with a hypergraph $G$ whose set of vertices is $\{1,\ldots ,m+n-1\}$ and whose set of 
hyperedges is 
\begin{displaymath}
E(G)=\{\{i,j\}: 1\leq i<j\leq n-1\}\cup \{\{i,k,l\}: 1\leq i\leq n-1, n\leq k<l\leq m+n-1\}.
 \end{displaymath} 
Now, if $m_1>m_2$, then $G^n_{m_1}$ and $G^n_{m_2}$ are associated with graphs $G_1$ and $G_2$,
 respectively, such that $G_1$ and $G_2$ have the same number of 2-element hyperedges,
but $G_1$ has more 3-element hyperedges than $G_2$. 
From this fact it follows that if $m_1>m_2$, then there is
 no hyperedge-surjective homomorphism $h:G_2\rightarrow G_1$,
 and $G^n_{m_1} \not\preceq _{\bf V}G^n_{m_2}$ by Lemma~\ref{lemma4}.  

Now, suppose that $m_1<m_2$ and for a contradiction suppose that there is a hyperedge-surjective homomorphism $h:G_2\rightarrow G_1$.
Clearly, each 2-element hyperedge of $G_2$ must be mapped to a 2-element hyperedge of $G_1$, and since 
$h$ is hyperedge-surjective and $G_1$ and $G_2$ have the same number of 2-element hyperedges,  there cannot be two 2-element hyperedges of $G_2$
mapped to the same 2-element hyperedge of $G_1$. Also, no 3-element hyperedge of $G_2$ can be mapped to a  
2-element hyperedge $J\in E(G_1)$, for otherwise $h^{-1}(J)$ would be of size at least $4$ and there is 
no hyperedge of $G_2$ of size greater than $3$. Similarly, there cannot be two
 3-element hyperedges of $G_2$ mapped to the same 3-element hyperedge of $G_1$. 
But then there is a 3-element hyperedge $I\in E(G_2)$ such that $h(I)\not\in E(G_1)$, 
which contradicts our assumption that $h:G_2\rightarrow G_1$ is a homomorphism.

Thus, if $m_1<m_2$, then there is
 no hyperedge-surjective homomorphism $h:G_2\rightarrow G_1$,
 and by Lemma~\ref{lemma4} it follows that $G^n_{m_1}\not\preceq _{\bf V}G^n_{m_2}$,
which completes the proof of the lemma.
\end{proof}

\begin{lemma} \label{lemma7}Let $\mathbb{O}$ denote the set of all odd integers $n\geq 7$, and
let $\mu _n$ denote the $n$-ary threshold function defined by 
\begin{displaymath}
\mu _n(x_1,\ldots ,x_n)=\left\{
             \begin{array}{ll}
                     1         &       \mbox{if $\#\{i:x_i=1\}\geq {\frac{n+1}{2}} \}$}   \\
                     0         &       \mbox{otherwise}.                
             \end{array}\right.
 \end{displaymath} 
The family $(T_n)_{n\in \mathbb{O}}$ given by 
\begin{displaymath}
T_n(x_1,\ldots ,x_n,x_{n+1}, x_{n+2} )=\left\{
             \begin{array}{lll}
             H_n (x_1,\ldots ,x_n)   &       \mbox{if $x_{n+1}=x_{n+2}=1 $}   \\
             \mu _n(x_1,\ldots ,x_n)        &       \mbox{if $x_{n+1}+x_{n+2}=1$ }\\                
             H^d_n (x_1,\ldots ,x_n)   &       \mbox{if $x_{n+1}=x_{n+2}=0 $}   
\end{array}\right.
 \end{displaymath} 
constitutes an (infinite) antichain of Boolean functions.
 Moreover, the family
$(s_n)_{n\in \mathbb{O}}$ defined by
\begin{displaymath}
s_n (x_1,\ldots ,x_n,x_{n+1}, x_{n+2} )=T_n(x_1,\ldots ,x_n,{\bar x_{n+1}},{\bar x_{n+2}})
 \end{displaymath} 
also constitutes an (infinite) antichain of Boolean functions.
\end{lemma}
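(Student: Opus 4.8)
The plan is to show, for all distinct $m,n\in\mathbb{O}$, that $T_m\not\preceq_{\bf V}T_n$ (this suffices, since $T_m$ and $T_n$ being incomparable means exactly $T_m\not\preceq_{\bf V}T_n$ and $T_n\not\preceq_{\bf V}T_m$), and likewise for $(s_n)$. First I would record the structure of $T_n$. Writing $k=\#\{i\le n:x_i=1\}$ and $\ell=x_{n+1}+x_{n+2}$, the value of $T_n$ depends only on the pair $(k,\ell)$, being $1$ exactly when $k\ge 2$ for $\ell=2$, when $k\ge\frac{n+1}{2}$ for $\ell=1$, and when $k\ge n-1$ for $\ell=0$. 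Since for odd $n\ge 7$ the thresholds satisfy $2<\frac{n+1}{2}<n-1$ and decrease as $\ell$ increases, $T_n$ is \emph{monotone}; and a short computation (using that $\mu_n$ is self-dual and that $H_n^d$ is the dual of $H_n$) gives $T_n^d=T_n$, so $T_n\in SM$. All $n+2$ variables are essential, so $T_n$ has essential arity $n+2$ and no dummy variables; consequently a simple variable substitution applied to $T_n$ yields a function of essential arity at most $n+2$, whence $T_m\not\preceq_{\bf V}T_n$ whenever $m>n$. It therefore remains to treat $m<n$.

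For $m<n$ I would exploit monotonicity through Lemma~\ref{lemma4}. Reading off the minimal true points of $T_n$ shows that $T_n=f_{G_n}$ for the hypergraph $G_n$ on $\{1,\dots,n+2\}$ whose hyperedges are the sets $\{a,b,n+1,n+2\}$ with $1\le a<b\le n$ (size $4$), the sets consisting of $\frac{n+1}{2}$ of the first $n$ vertices together with one of $n+1,n+2$ (size $\frac{n+3}{2}$), and the $(n-1)$-subsets of $\{1,\dots,n\}$ (size $n-1$). Suppose $T_m\preceq_{\bf V}T_n$; by Lemma~\ref{lemma4} there is a hyperedge-surjective homomorphism $h\colon G_n\to G_m$. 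Since $m\ge 7$, every hyperedge of $G_m$ has at least $4$ vertices, so the image of each $4$-edge $\{a,b,n+1,n+2\}$ of $G_n$, having at most $4$ vertices, must be a $4$-edge of $G_m$, i.e.\ of the form $\{\cdot,\cdot,m+1,m+2\}$. The core of the argument, and the step I expect to be the main obstacle, is to deduce from this that $\{h(n+1),h(n+2)\}=\{m+1,m+2\}$; this \emph{pole-forcing} follows by testing three distinct vertices $a,b,c$ among $\{1,\dots,n\}$ and checking that any other behaviour would collapse some $4$-edge to fewer than four vertices. Once it holds, each pair $a\neq b$ among the first $n$ vertices is sent by $h$ to the two distinct ground vertices of a $4$-edge of $G_m$, so $h$ restricts to an injection $\{1,\dots,n\}\hookrightarrow\{1,\dots,m\}$, which is impossible for $m<n$.

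Finally, for the family $(s_n)$ I would proceed analogously, the essential difference being that $s_n$, although still self-dual, is monotone increasing in $x_1,\dots,x_n$ but monotone decreasing in the two control variables $x_{n+1},x_{n+2}$, and hence is \emph{not} monotone; so Lemma~\ref{lemma4} is unavailable and the incomparability must be argued directly on true points. Here the minimal true points of $s_n$ are exactly the pairs $\{a,b\}\subseteq\{1,\dots,n\}$ with the control variables set to $0$ (the edges of $K_n$), and the control variables are distinguished as the unique negative-monotone variables of $s_n$. Carrying out the same pole-forcing directly shows that $s_m\preceq_{\bf V}s_n$ with $m<n$ would force the two control positions of $s_n$ to be filled by $x_{m+1},x_{m+2}$ and the first $n$ positions by $x_1,\dots,x_m$; restricting to $x_{m+1}=x_{m+2}=0$ then yields $H_m\preceq_{\bf V}H_n$, contradicting Lemma~\ref{lemma5}. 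The secondary obstacle is precisely this transfer of the pole-forcing to the non-monotone setting, in particular ruling out that a control variable of $s_m$ is substituted into a ground position of $s_n$, which I would handle by tracking the monotonicity type (isotone versus antitone) of each variable across the substitution.
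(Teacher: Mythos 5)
Your treatment of the family $(T_n)_{n\in\mathbb{O}}$ is correct, but it follows a genuinely different route from the paper's. The paper argues by brute force: assuming $T_n=T_m(p_1,\ldots,p_{m+2})$ with $n<m$, it splits into cases according to which variables occupy the two control positions $p_{m+1},p_{m+2}$ and exhibits in each case an explicit point where the two sides disagree. You instead exploit monotonicity: $T_n=f_{G_n}$ for the hypergraph $G_n$ of its minimal true points (sizes $4$, $\tfrac{n+3}{2}$, $n-1$, which are pairwise distinct for $n\geq 7$), and Lemma~\ref{lemma4} converts $T_m\preceq_{\bf V}T_n$ into a hyperedge-surjective homomorphism $h\colon G_n\to G_m$; since every hyperedge of $G_m$ has at least $4$ vertices, each $4$-edge $\{a,b,n+1,n+2\}$ of $G_n$ maps injectively onto a $4$-edge of $G_m$, your pole-forcing step then pins $\{h(n+1),h(n+2)\}=\{m+1,m+2\}$ (if, say, $h(n+1)\notin\{m+1,m+2\}$, then either two ground vertices collide on a $4$-edge, or three ground vertices would have to map pairwise onto the two poles, both impossible), and finally $h$ is injective on $\{1,\ldots,n\}$ into $\{1,\ldots,m\}$, contradicting $n>m$. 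This is structural where the paper's argument is computational; its only extra cost is verifying the minimal-true-point description of $T_n$, which is routine.

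The $(s_n)$ part of your proposal has a genuine gap, and it cannot be repaired, because the second claim of Lemma~\ref{lemma7} is in fact false. Your monotonicity-type tracking does prove half of the pole-forcing: each of $y_{m+1},y_{m+2}$ (the antitone essential variables of $s_m$) must occupy a control position of $s_n$, so $\{p_{n+1},p_{n+2}\}=\{y_{m+1},y_{m+2}\}$. But it cannot rule out that these same variables \emph{also} occupy ground positions, because in $s_n$ the ground positions (isotone) and control positions (antitone) pull in opposite directions and can cancel exactly. They do: for odd $7\leq m<n$ take
\begin{displaymath}
\begin{array}{l}
p_i=y_i\ (1\leq i\leq m),\qquad p_i=y_{m+1}\ \bigl(m<i\leq m+\tfrac{n-m}{2}\bigr),\\[2pt]
p_i=y_{m+2}\ \bigl(m+\tfrac{n-m}{2}<i\leq n\bigr),\qquad p_{n+1}=y_{m+1},\qquad p_{n+2}=y_{m+2}.
\end{array}
\end{displaymath}
Writing $k=\#\{i\leq m: y_i=1\}$ and $\ell=y_{m+1}+y_{m+2}$, the ground input of $s_n$ has $k+\tfrac{n-m}{2}\ell$ ones and the control sum is $\ell$, so the composite equals $1$ iff $k+\tfrac{n-m}{2}\ell$ reaches the threshold $2$, $\tfrac{n+1}{2}$, $n-1$ (for $\ell=0,1,2$), i.e.\ iff $k\geq 2$, $\tfrac{m+1}{2}$, $m-1$ respectively --- which is exactly $s_m$. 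Hence $s_m\preceq_{\bf V}s_n$ for all odd $7\leq m<n$; in the smallest instance, $s_7(y_1,\ldots,y_9)=s_9(y_1,\ldots,y_7,y_8,y_9,y_8,y_9)$, as one checks directly on the four settings of $(y_8,y_9)$. So $(s_n)_{n\in\mathbb{O}}$ is a chain under $\preceq_{\bf V}$, not an antichain. Note that this cancellation is impossible for $T_n$, where both kinds of positions have the same orientation; that is precisely why your argument (and the paper's) succeeds there. The paper itself disposes of $(s_n)$ with the remark that the proof follows ``by minor adjustments'' of the $T_n$ case; your attempt to actually carry out those adjustments runs into the obstruction that refutes the claim. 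The error propagates: Proposition~\ref{proposition6}$(ii)$ cites $(s_n)$ as an infinite antichain in $S_c\setminus(SM\cup L)$, so that proposition now needs a different construction.
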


\begin{proof} Since 
each function of $({T_n})_{n\in \mathbb{O}}$, and 
each function of $(s_n)_{n\in \mathbb{O}}$ has only essential variables,
to prove the lemma we only need to show that if $n<m$, then $T_n\not\preceq _{\bf V} T_m$ and
 $s_n\not\preceq _{\bf V}s_m$.

So assume that $7\leq n<m$, and for a contradiction, suppose first that $T_{n}\preceq _{\bf V}T_m$, i.e.   
there are $n+2$-ary projections $p_1,\ldots ,p_{m+2}$ such that 
 \begin{displaymath}
T_n=T_m(p_1,\ldots ,p_{m+2}). 
 \end{displaymath}
Note that for each $1\leq i\leq n+2$, there is at least one $1\leq i_1\leq m+2$ such that $p_{i_1}=x_i$
because $T_n$ has no dummy variables.

First we consider the case $p_{m+1}=p_{m+2}$. Let $1\leq j\leq n$.
 If both projections are $x_{n+1}$, or $x_{n+2}$, or $x_j$, then 
 for $a_i=1$ if and only if $i=j, n+1,n+2$, we have
 \begin{displaymath}
 \begin{array}{lll}  
T_{n} (a_1,\ldots ,a_{n+2}) =H_n (a_1,\ldots ,a_{n}) =0 \textrm{ and }\\
T_{m} (p_1(a_1,\ldots ,a_{n+2}),\ldots ,p_{m+2}(a_1,\ldots ,a_{n+2}))=\\
H_m (p_1(a_1,\ldots ,a_{n+2}),\ldots ,p_{m}(a_1,\ldots ,a_{n+2}))=1
 \end{array}
 \end{displaymath}
 
If $p_{m+1} \not= p_{m+2} $, say $p_{m+1}=x_j$ and $p_{m+2}=x_k $, $1\leq j<k\leq n$,
then for $a_i=1$ if and only if $i\not=i_1,i_2,j,k$, where 
$1\leq i_1<i_2\leq n$ are indices distinct from $j$ and $k$,
 we have
 \begin{displaymath}
 \begin{array}{lll}  
T_{n} (a_1,\ldots ,a_{n+2}) =H_n (a_1,\ldots ,a_{n}) =1\textrm{ and }\\
T_{m} (p_1(a_1,\ldots ,a_{n+2}),\ldots ,p_{m+2}(a_1,\ldots ,a_{n+2}))=\\
H^d_m (p_1(a_1,\ldots ,a_{n+2}),\ldots ,p_{m}(a_1,\ldots ,a_{n+2}))=0
 \end{array}
 \end{displaymath}

Next we consider the case $p_{m+1} \in \{x_{n+1},x_{n+2}\}$ and $ p_{m+2} \in \{x_{1},\ldots ,x_{n}\}$,
or $p_{m+2} \in \{x_{n+1},x_{n+2}\}$ and $ p_{m+1} \in \{x_{1},\ldots ,x_{n}\}$.
Without loss of generality, assume that 
$p_{m+1}=x_{n+1}$ and $ p_{m+2}=x_{j}$, $1\leq j\leq n$.
If there are at least two $1\leq i_1<i_2\leq m$ such that $p_{i_1}=p_{i_2}=x_{n+2}$, then 
for $a_i=1$ if and only if $i=j,n+1,n+2$, 
 \begin{displaymath}
 \begin{array}{lll}  
T_{n} (a_1,\ldots ,a_{n+2}) =H_n (a_1,\ldots ,a_{n}) =0\textrm{ and }\\
T_{m} (p_1(a_1,\ldots ,a_{n+2}),\ldots ,p_{m+2}(a_1,\ldots ,a_{n+2}))=\\
H_m (p_1(a_1,\ldots ,a_{n+2}),\ldots ,p_{m}(a_1,\ldots ,a_{n+2}))=1
 \end{array}
 \end{displaymath}
If there is a unique $1\leq k\leq m$ such that $p_{k}=x_{n+2}$, then 
let $I=\{i_1,\ldots ,i_{\frac{n+1}{2}} \}$ be a ``majority" of indices not containing $j,n+1,n+2$.
Thus, for $a_i=1$ if and only if $i\in I$, we have 
 \begin{displaymath}
\mu _n(a_1,\ldots ,a_{n}) =1.
 \end{displaymath} 
Now, if 
 \begin{displaymath}
T_{m} (p_1(a_1,\ldots ,a_{n+2}),\ldots ,p_{m+2}(a_1,\ldots ,a_{n+2}))=1
 \end{displaymath} 
with $a_i=1$ if and only if $i\in I\cup \{n+1\}$, 
then for $a_i=1$ if and only if $i\in I\cup \{n+2\}$, 
 \begin{displaymath}
 \begin{array}{lll}  
T_{n} (a_1,\ldots ,a_{n+2}) =\mu _n (a_1,\ldots ,a_{n}) =1\textrm{ and }\\
T_{m} (p_1(a_1,\ldots ,a_{n+2}),\ldots ,p_{m+2}(a_1,\ldots ,a_{n+2}))=\\
H^d_m (p_1(a_1,\ldots ,a_{n+2}),\ldots ,p_{m}(a_1,\ldots ,a_{n+2}))=0.
 \end{array}
 \end{displaymath}
Otherwise, for $a_i=1$ if and only if $i\in I\cup \{n+1\}$, 
 \begin{displaymath}
 \begin{array}{lll} 
T_{n} (a_1,\ldots ,a_{n+2}) =\mu _n (a_1,\ldots ,a_{n}) =1\textrm{ and }\\
T_{m} (p_1(a_1,\ldots ,a_{n+2}),\ldots ,p_{m+2}(a_1,\ldots ,a_{n+2}))=\\
\mu _m (p_1(a_1,\ldots ,a_{n+2}),\ldots ,p_{m}(a_1,\ldots ,a_{n+2}))=0.
 \end{array}
 \end{displaymath}

Finally, we consider the case $p_{m+1}\not=p_{m+2}$ and  $p_{m+1},p_{m+2}\in \{x_{n+1},x_{n+2}\}$.
Without loss of generality, suppose that 
$p_{m+1}=x_{n+1}$ and $ p_{m+2}=x_{n+2}$.

Note that there must be at least one $1\leq i_1\leq m$, such that 
 $p_{i_1}=x_{n+1}$ or $p_{i_1}=x_{n+2}$, otherwise, by identifying $x_{n+1}=x_{n+2}=1$
we would conclude that $H_n\preceq _{\bf V}H_m$, which contradicts Lemma~\ref{lemma5},
 or alternatively, by identifying $x_{n+1}=x_{n+2}=0$
we would conclude that $H^d_n\preceq _{\bf V}H^d_m$ which, together with Fact~\ref{fact:dual},
again constitutes a contradiction.

If there are $1\leq i_1<i_2\leq m$, such that $p_{i_1},p_{i_2}  \in \{x_{n+1},x_{n+2}\}$, then
 for $a_i=1$ if and only if $i=n+1,n+2$, 
 \begin{displaymath}
 \begin{array}{lll}  
T_{n} (a_1,\ldots ,a_{n+2}) =H_n (a_1,\ldots ,a_{n}) =0\textrm{ and }\\
T_{m} (p_1(a_1,\ldots ,a_{n+2}),\ldots ,p_{m+2}(a_1,\ldots ,a_{n+2}))=\\
H_m (p_1(a_1,\ldots ,a_{n+2}),\ldots ,p_{m}(a_1,\ldots ,a_{n+2}))=1.
 \end{array}
 \end{displaymath}
 If there is exactly one $1\leq i_1\leq m$, such that $p_{i_1}  \in \{x_{n+1},x_{n+2}\}$, then
 for $a_i=1$ if and only if $i=j,n+1,n+2$, for a unique $1\leq j\leq n$,  
 \begin{displaymath}
 \begin{array}{lll}  
T_{n} (a_1,\ldots ,a_{n+2}) =H_n (a_1,\ldots ,a_{n}) =0\textrm{ and }\\
T_{m} (p_1(a_1,\ldots ,a_{n+2}),\ldots ,p_{m+2}(a_1,\ldots ,a_{n+2}))=\\
H_m (p_1(a_1,\ldots ,a_{n+2}),\ldots ,p_{m}(a_1,\ldots ,a_{n+2}))=1.
 \end{array}
 \end{displaymath}

In all possible cases, we derive the same contradiction $T_n\not=T_m(p_1,\ldots ,p_{m+2})$, and
hence, ${T_n}\not\preceq _{\bf V}{ T_m}$.

The proof of $s_n\not\preceq _{\bf V}s_m$ can be obtained by minor adjustments in the proof above.
\end{proof}

\subsection{Classification of the closed
 intervals of ${\bf E}_{\mathbb{B}}$}

In this subsection we provide a complete classification of the closed
 intervals of ${\bf E}_{\mathbb{B}}$ in terms of their size. We prove the following theorem:

\begin{theorem}
\label{theorem:classification}
Let $\interval{\mathcal{C}_1}{\mathcal{C}_2}$ be a non-empty closed interval of ${\bf E}_{\mathbb{B}}$.
Then $\interval{\mathcal{C}_1}{\mathcal{C}_2}$ is countable 
 if and only if one of the following holds:
 \begin{itemize}
\item ${\mathcal{C}_2}\subseteq V$, 
\item ${\mathcal{C}_2}\subseteq \Lambda $, 
\item ${\mathcal{C}_2}\subseteq L$, 
\item  $\mathcal{C}\cap M_c\subseteq \mathcal{C}_1$ and $\mathcal{C}_2 \subseteq \mathcal{C}\cap M$
where $\mathcal{C}$ is a clone in 
\begin{displaymath}
\{U_m:m\geq 2\}\cup \{W_m:m\geq 2\}  \cup \{\Omega ,U_\infty , W_\infty \}.
\end{displaymath}
\end{itemize}    
\end{theorem}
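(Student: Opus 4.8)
The plan is to read countability directly off Theorem~\ref{theorem:characterization}: the interval $\interval{\mathcal{C}_1}{\mathcal{C}_2}$ is countable if and only if $\mathcal{C}_2\setminus\mathcal{C}_1$ contains no infinite antichain with respect to $\preceq_{\bf V}$. So it suffices to prove that the four listed conditions are exactly the situations in which $\mathcal{C}_2\setminus\mathcal{C}_1$ admits only finite antichains. I would establish the two implications separately, using an analysis of $\preceq_{\bf V}$ on the small clones $V$, $\Lambda$, $L$, $M$ for the \emph{if} direction and the engineered antichains of Subsection~4.2 for the \emph{only if} direction.

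For the \emph{if} direction I would show that each condition forces $\mathcal{C}_2\setminus\mathcal{C}_1$ to meet only finite antichains. If $\mathcal{C}_2\subseteq V$, every non-constant member of $\mathcal{C}_2$ is a disjunction $x_{i_1}\vee\cdots\vee x_{i_k}$, and such disjunctions are linearly ordered by $\preceq_{\bf V}$ according to their essential arity; adjoining the two constants $\bf 0$, $\bf 1$ bounds antichains of $(V,\preceq_{\bf V})$, and hence of $\mathcal{C}_2\setminus\mathcal{C}_1$, by $3$. The case $\mathcal{C}_2\subseteq\Lambda$ is dual. If $\mathcal{C}_2\subseteq L$, each member is $c_0\oplus x_{i_1}\oplus\cdots\oplus x_{i_k}$, and simple variable substitution preserves the constant term $c_0$ and the parity of the essential arity while only allowing the arity to decrease; thus $(L,\preceq_{\bf V})$ decomposes into four chains (indexed by $c_0\in\mathbb B$ and the parity of $k$), so its antichains have size at most $4$. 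Finally, if $\mathcal{C}\cap M_c\subseteq\mathcal{C}_1$ and $\mathcal{C}_2\subseteq\mathcal{C}\cap M$ for one of the listed clones $\mathcal{C}$, then $\mathcal{C}_2\setminus\mathcal{C}_1\subseteq M\setminus M_c$; since a monotone function that is not constant-preserving is forced (by monotonicity at $\bf 0$ and $\bf 1$) to be one of the constants $\bf 0$, $\bf 1$, the set $\mathcal{C}_2\setminus\mathcal{C}_1$ consists only of quasi-monadic functions and the interval is in fact finite by Theorem~\ref{theorem:finite intervals}.

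For the \emph{only if} direction I would prove the contrapositive: assuming none of the four conditions holds, I produce an infinite antichain inside $\mathcal{C}_2\setminus\mathcal{C}_1$. Since the only idempotent classes that are not clones, namely $\emptyset$, $C_0$, $C_1$, $C$, are all contained in $C\subseteq V$, and $\mathcal{C}_2\not\subseteq V$, we may assume $\mathcal{C}_2$ is a genuine clone and locate it in the Post Lattice; I would then split on whether $\mathcal{C}_2\subseteq M$. If $\mathcal{C}_2\subseteq M$ (but $\mathcal{C}_2\not\subseteq V,\Lambda$), the failure of the fourth condition for every admissible $\mathcal{C}$ pins down enough of the gap that the monotone constant-preserving family $(H_n)$ of Lemma~\ref{lemma5}, its complete-graph composites $(G^n_m)$, or their duals (Fact~\ref{fact:dual}) sit cofinally in the relevant $MU_m$/$MW_m$ stratum, so infinitely many of them lie in $\mathcal{C}_2$ yet escape $\mathcal{C}_1$. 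If $\mathcal{C}_2\not\subseteq M$, then $\mathcal{C}_2$ contains a non-monotone function and still $\mathcal{C}_2\not\subseteq V,\Lambda,L$; here I would case on which co-atoms ($T_0$, $T_1$, $T_c$, $S$) contain $\mathcal{C}_2$ and feed in the matching family: $(f_n)$ and the $1$-separating products $(u_n)=x_0\wedge f_n$ of Lemmas~\ref{lemma1} and~\ref{lemma3} for the $0$-preserving strata, $(g_n)$ and $(t^u_n)$ of Lemmas~\ref{lemma2} and~\ref{lemma3} for the constant-preserving strata, the self-dual threshold family $(T_n)$ and its variant $(s_n)$ of Lemma~\ref{lemma7} for the self-dual strata, and the images of all these under the complement and dual automorphisms (Fact~\ref{fact:dual}) for the symmetric strata.

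The main obstacle is this \emph{only if} direction. It amounts to an exhaustive walk through the finitely many clones of the Post Lattice, and the delicate point is not merely exhibiting an antichain inside $\mathcal{C}_2$ but certifying that infinitely many of its members avoid $\mathcal{C}_1$; a clone $\mathcal{C}_1$ that absorbs a tail of the chosen family tends to force one of the four conditions, and making that dichotomy precise requires the exact membership structure of the clones involved. This is precisely why the families of Lemmas~\ref{lemma3}, \ref{lemma5} and~\ref{lemma7} are built as they are---multiplying $f_n$ and $g_n$ by a fresh variable to force $1$-separation, composing $H_n$ with itself to control the count of small hyperedges, and interleaving $H_n$, $\mu_n$, $H^d_n$ to obtain self-dual witnesses---so that each family is pinned just above a particular boundary clone ($U_\infty$, $W_\infty$, $M_c$, $S$) and populates exactly the narrow gap $\mathcal{C}_2\setminus\mathcal{C}_1$ that survives when all four conditions fail; using the three lattice automorphisms of Fact~\ref{fact:dual} to halve the symmetric cases is what keeps the bookkeeping finite.
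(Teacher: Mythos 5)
Your ``if'' direction is correct and coincides with the paper's: countability is read off Theorem~\ref{theorem:characterization}, antichains in $V$, $\Lambda$, $L$ are bounded (your chain decompositions by essential arity, constant term and parity are a more explicit version of the paper's claim), and under the fourth condition $\mathcal{C}_2\setminus\mathcal{C}_1\subseteq M\setminus M_c=\{{\bf 0},{\bf 1}\}I_c$, so the interval is even finite. The toolkit you invoke for the converse (the antichains of Subsection 4.2 plus Fact~\ref{fact:dual}) is also the paper's. The problem is how you organize that converse, and it is a genuine gap, not a matter of polish. The paper first reduces to \emph{minimal} closed intervals --- any interval violating the four conditions contains a covering pair of idempotent classes still violating them --- and then, for each covering pair, exhibits an antichain inside the explicit, known difference (Propositions~\ref{proposition1}--\ref{proposition6}). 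That reduction is exactly what disposes of the issue you yourself name as ``the main obstacle'' (certifying that infinitely many members of the chosen family avoid $\mathcal{C}_1$) and then leave unresolved; in a complete proof this cannot remain an acknowledged difficulty.

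Moreover, your concrete case split provably miscovers part of the lattice. Consider the interval $\interval{U_{k+1}}{U_k}$ for $k\geq 3$ (one of the paper's minimal intervals, Proposition~\ref{proposition4}); it violates all four conditions, since $U_k\not\subseteq V,\Lambda,L$ and $U_k\not\subseteq M$. Because $\mathcal{C}_2=U_k\not\subseteq M$, your plan feeds in the non-monotone families and their automorphic images. But none of $(f_n)$, $(g_n)$, $(T_n)$, $(s_n)$, nor any of their complements or duals, belongs to $U_3$ (each has a pair or triple of true points with no common $1$-coordinate, or has ${\bf 0}$ as a true point), hence none lies in $\mathcal{C}_2=U_k$; while $(u_n)$ and $(t^u_n)$ lie in $U_\infty\subseteq U_{k+1}=\mathcal{C}_1$, since all their true points have $x_0=1$, and their duals and complements again fall outside $U_3$. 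So every family your second case deploys either misses $\mathcal{C}_2$ or is absorbed by $\mathcal{C}_1$. The witnesses that actually work here are \emph{monotone}: the duals of the hypergraph composites $(G^{k+1}_m)_{m\geq k+1}$, which form an infinite antichain in $M_cU_k\setminus U_{k+1}$ --- and your plan reserves $(G^n_m)$ for the case $\mathcal{C}_2\subseteq M$. Relatedly, your premise of ``an exhaustive walk through the finitely many clones of the Post Lattice'' is false: the Post Lattice has countably infinitely many clones, and the infinite chains $U_2\supset U_3\supset\cdots$ and $W_2\supset W_3\supset\cdots$ are precisely where your co-atom casing breaks down and where the paper needs the dedicated Proposition~\ref{proposition4}.
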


The proof of Theorem~\ref{theorem:classification} follows from several propositions.

\begin{proposition}
Let $\interval{\mathcal{C}_1}{\mathcal{C}_2}$ be a closed interval of ${\bf E}_{\mathbb{B}}$.
If ${\mathcal{C}_2}\subseteq \Lambda \cup V\cup L$, then $\interval{\mathcal{C}_1}{\mathcal{C}_2}$ is 
countable.
\end{proposition}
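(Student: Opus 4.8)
The plan is to reduce the statement to Theorem~\ref{theorem:characterization}: since $\interval{\mathcal{C}_1}{\mathcal{C}_2}$ fails to be countable exactly when $\mathcal{C}_2\setminus\mathcal{C}_1$ contains an infinite antichain with respect to $\preceq_{\bf V}$, and since $\mathcal{C}_2\setminus\mathcal{C}_1\subseteq\mathcal{C}_2\subseteq\Lambda\cup V\cup L$, it suffices to prove that $\Lambda\cup V\cup L$ contains no infinite antichain. I would establish this by showing that each of the three classes $\Lambda$, $V$ and $L$ is covered by finitely many $\preceq_{\bf V}$-chains; a union of finitely many chains contains no infinite antichain, since any antichain meets each chain in at most one element, so $\Lambda\cup V\cup L$ would then have all antichains of bounded (finite) size.

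For $\Lambda$ I would argue that the nonconstant conjunctions form a single chain under $\preceq_{\bf V}$. Indeed, if $f=x_{i_1}\wedge\cdots\wedge x_{i_k}$ has essential arity $k$, then substituting projections into $f$ again yields a conjunction of variables, so every $g\preceq_{\bf V}f$ is a nonconstant conjunction of essential arity at most $k$; conversely, given any $1\le j\le k$ one recovers a conjunction of $j$ variables from $f$ by identifying $k-j+1$ of its arguments (using $x\wedge x=x$). Hence nonconstant conjunctions are totally pre-ordered by essential arity. Adjoining the two singleton chains $\{{\bf 0}\}$ and $\{{\bf 1}\}$ (neither constant can be produced from, nor can it produce, a nonconstant conjunction by simple variable substitution, nor are the two constants comparable) covers $\Lambda$ by three chains. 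The class $V$ is handled dually, using $x\vee x=x$, again giving three chains.

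The linear case is the delicate one. Writing a linear function as $f=c_0+c_1x_1+\cdots+c_nx_n$ over $\mathbb{B}$, I would attach to $f$ the signature $(c_0,\,e(f)\bmod 2)$, where $c_0=f({\bf 0})$ and $e(f)=\#\{i:c_i=1\}$ is the essential arity of $f$. There are four signatures. The key observation is that a simple variable substitution replaces $x_1,\dots,x_n$ by projections and then cancels repeated variables via $x+x=0$; this leaves the value $c_0=f({\bf 0})$ unchanged and alters the essential arity only by an even amount, so $g\preceq_{\bf V}f$ forces $g$ and $f$ to share the same signature. Conversely, whenever $g$ and $f$ have the same signature and $e(g)\le e(f)$, one obtains $g$ from $f$ by pairing up projections so as to cancel the even number $e(f)-e(g)$ of superfluous arguments. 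Thus each of the four signature classes is a single $\preceq_{\bf V}$-chain, and $L$ is covered by four chains.

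Combining the three cases, $\Lambda\cup V\cup L$ is covered by finitely many chains and hence contains no infinite antichain; a fortiori $\mathcal{C}_2\setminus\mathcal{C}_1$ contains no infinite antichain, and Theorem~\ref{theorem:characterization} yields that $\interval{\mathcal{C}_1}{\mathcal{C}_2}$ is countable. The main obstacle is precisely the linear case: one must correctly track how the cancellation $x+x=0$ constrains which functions are reachable by simple variable substitution, in particular that both the constant value at ${\bf 0}$ and the parity of the essential arity are invariant. This is what prevents the four signature classes from merging into fewer pieces or fracturing into infinitely many mutually incomparable ones, and it is the only point where a genuine (rather than purely combinatorial) Boolean computation is required.
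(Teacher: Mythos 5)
Your proof is correct and follows the same route as the paper: apply Theorem~\ref{theorem:characterization} after checking that every $\preceq_{\bf V}$-antichain in $\Lambda$, in $V$, and in $L$ is finite. The paper dismisses that check as ``easy to verify''; your chain-cover argument (three chains for $\Lambda$, three for $V$, and the four signature classes $(f(\mathbf{0}),\,e(f)\bmod 2)$ for $L$) is a correct and complete way of supplying exactly those omitted details.
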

 \begin{proof}
Using the description of the clones $\Lambda $, $V$ and $L$, it is easy to verify that every antichain
 in $\Lambda $, in $V$ or in $L$ is finite. The proof of the proposition follows then from
Theorem~\ref{theorem:characterization}.  
 \end{proof}

From the fact that $M\setminus M_c= \{{\bf 0}, {\bf 1}\}I_c$, it follows that:
\begin{proposition}
If $\mathcal{C}\in \{U_m:m\geq 2\}\cup \{W_m:m\geq 2\}  \cup \{\Omega ,U_\infty , W_\infty \}$, 
then the closed interval $\interval{\mathcal{C}\cap M_c}{\mathcal{C}\cap M}$ of 
${\bf E}_{\mathbb{B}}$ is finite.
\end{proposition}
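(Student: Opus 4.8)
The plan is to reduce the statement directly to Theorem~\ref{theorem:finite intervals}, which asserts that a closed interval $\interval{\mathcal{C}_1}{\mathcal{C}_2}$ is finite precisely when $\mathcal{C}_2\setminus \mathcal{C}_1$ contains only quasi-monadic operations. First I would check that the endpoints are legitimate idempotent classes: since $\mathcal{C}$, $M$, and $M_c$ are all clones and the clones are closed under intersection, both $\mathcal{C}\cap M$ and $\mathcal{C}\cap M_c$ are again clones, hence idempotent members of ${\bf E}_{\mathbb{B}}$, and the inclusion $M_c\subseteq M$ gives $\mathcal{C}\cap M_c\subseteq \mathcal{C}\cap M$. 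So $\interval{\mathcal{C}\cap M_c}{\mathcal{C}\cap M}$ really is a non-empty closed interval.

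The key computation is the set-theoretic identity
\begin{displaymath}
(\mathcal{C}\cap M)\setminus(\mathcal{C}\cap M_c)=\mathcal{C}\cap(M\setminus M_c),
\end{displaymath}
which holds because $M_c\subseteq M$: a function lying in $\mathcal{C}\cap M$ but outside $\mathcal{C}\cap M_c$ is already in $\mathcal{C}$, so failing to lie in $\mathcal{C}\cap M_c$ is the same as failing to lie in $M_c$. Using the observation recorded just above the proposition, namely $M\setminus M_c=\{{\bf 0},{\bf 1}\}I_c$, I then obtain
\begin{displaymath}
(\mathcal{C}\cap M)\setminus(\mathcal{C}\cap M_c)\subseteq \{{\bf 0},{\bf 1}\}I_c.
\end{displaymath}
Every member of $\{{\bf 0},{\bf 1}\}I_c$ is a constant function (of some arity), obtained from ${\bf 0}$ or ${\bf 1}$ by simple variable substitution, and constant functions have essential arity $0$, hence are quasi-monadic. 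Thus $\mathcal{C}_2\setminus \mathcal{C}_1$ contains only quasi-monadic operations, and Theorem~\ref{theorem:finite intervals} yields that the interval is finite.

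There is essentially no obstacle here; the statement is a direct corollary of the finiteness criterion once the identity $M\setminus M_c=\{{\bf 0},{\bf 1}\}I_c$ is in hand. The only point deserving care is that the argument is uniform over the entire listed family of clones $\mathcal{C}$: the particular value of $\mathcal{C}$ never enters beyond the fact that it is a clone, since the displayed inclusion holds regardless of which constants (if any) actually belong to $\mathcal{C}$. In the degenerate case where $\mathcal{C}$ contains neither ${\bf 0}$ nor ${\bf 1}$, the difference $\mathcal{C}_2\setminus \mathcal{C}_1$ is empty, the two endpoints coincide, and the interval is trivially finite.
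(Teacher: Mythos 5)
Your proof is correct and follows exactly the route the paper intends: the paper derives this proposition as an immediate consequence of the observation $M\setminus M_c=\{{\bf 0},{\bf 1}\}I_c$ combined with the criterion of Theorem~\ref{theorem:finite intervals}, which is precisely your argument. You merely spell out the details (that the endpoints are clones, hence idempotent, and that the set difference consists only of constants, hence quasi-monadic) that the paper leaves implicit.
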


Thus if $\interval{\mathcal{C}_1}{\mathcal{C}_2}$ satisfies the conditions of 
Theorem~\ref{theorem:classification}, then it is countable.
 To prove that these are indeed
the only countable closed intervals of ${\bf E}_{\mathbb{B}}$, we show that the minimal intervals which do
not satisfy the conditions of 
Theorem~\ref{theorem:classification} are uncountable
by making use of the antichains 
provided in Subsection 4.2 and applying Theorem~\ref{theorem:characterization}.
 This suffices to complete the proof 
of Theorem~\ref{theorem:characterization} because if a closed interval does not satisfy the conditions of 
Theorem~\ref{theorem:classification}, then it must contain a minimal interval not satisfying 
the same conditions.

\begin{proposition}\label{proposition1}
Each of the minimal intervals 
\begin{itemize}
\item[$(i)$] $\interval{\mathcal{C}}{\Omega }$ where $\mathcal{C}\in \{T_0,T_1,L,S,M\}$,
\item[$(ii)$] $\interval{\mathcal{C}}{T_0}$ where $\mathcal{C}\in \{T_c,L_0,M_0,U_2\}$,
\item[$(iii)$] $\interval{\mathcal{C}}{T_1}$ where $\mathcal{C}\in \{T_c,L_1,M_1,W_2\}$,
\item[$(iv)$]  $\interval{\mathcal{C}}{T_c}$ where $\mathcal{C}\in \{M_c,S_c,T_cU_2,T_cW_2\}$,
\end{itemize}
is uncountable.
\end{proposition}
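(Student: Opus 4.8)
The plan is to reduce every case to Theorem~\ref{theorem:characterization}: since each $\interval{\mathcal{C}_1}{\mathcal{C}_2}$ here is a closed interval, it is uncountable as soon as I can display a single infinite $\preceq_{\bf V}$-antichain contained in $\mathcal{C}_2\setminus\mathcal{C}_1$. My intention is to supply all of these antichains from just the two explicit families already at hand, $(f_n)_{n\geq 4}$ from Lemma~\ref{lemma1} and $(g_n)_{n\geq 4}$ from Lemma~\ref{lemma2}, pushing them into the dual intervals through Fact~\ref{fact:dual}. The point is that $f_n$ lives deep in $T_0$ and $g_n$ deep in $T_c$, while both deliberately fall outside the maximal subclones that occur as lower endpoints.

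First I would record the membership data on which the whole argument rests. For $f_n$ ($n\geq 4$): it is $0$-preserving, but $f_n({\bf 1})=0$, so $f_n\notin T_1$; its value falls from weight $1$ to weight $2$, so $f_n\notin M$; one checks $f_n^d=\overline{f_n}\neq f_n$, so $f_n\notin S$; and the two weight-one vectors with their single $1$ in different coordinates both lie in $f_n^{-1}(1)$ yet share no common $1$-coordinate, so $f_n\notin U_2$. Hence $f_n\in T_0\setminus(T_1\cup S\cup M\cup U_2)$. The one genuinely delicate point is linearity: $f_4$ is exactly the $4$-ary parity function and so lies in $L$, whereas for $n\geq 5$ the values $1,0,0$ on weights $1,2,3$ already break the weight-parity alternation that a linear symmetric function must obey, so $f_n\notin L$. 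I would therefore use the whole family whenever the excluded clone is among $T_1,S,M,U_2$, and pass to the tail $(f_n)_{n\geq 5}$ whenever $L$ (or $L_0$, $L_1$) is involved; a tail of an infinite antichain is again one. For $g_n$ the parallel facts are $g_n\in T_c$, together with $g_n\notin M$ (its value falls from weight $n-2$ to weight $n-1$), $g_n\notin S$, $g_n\notin U_2$ (again two weight-one vectors in $g_n^{-1}(1)$), and $g_n\notin W_2$ (two weight-$(n-1)$ vectors in $g_n^{-1}(0)$ whose unique zeros sit in different coordinates), so that $g_n\in T_c\setminus(M\cup S\cup U_2\cup W_2)$.

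The four families of intervals then follow directly. For $(i)$, the family $(f_n)$ itself lies in $\Omega\setminus\mathcal{C}$ for each $\mathcal{C}\in\{T_1,S,M\}$ and, using the tail, also for $\mathcal{C}=L$; for the remaining endpoint $\mathcal{C}=T_0$ I apply Fact~\ref{fact:dual} to $(f_n)\subseteq\Omega\setminus T_1$ and use $\Omega^d=\Omega$, $T_1^d=T_0$ to obtain the antichain $(f_n^d)\subseteq\Omega\setminus T_0$. For $(ii)$ the same family works uniformly, because $T_c=T_0\cap T_1$, $L_0=L\cap T_0$, $M_0=M\cap T_0$ and $U_2\subseteq T_0$; since $f_n\in T_0$ avoids $T_1,L,M,U_2$ it lies in $T_0\setminus\mathcal{C}$ for every listed $\mathcal{C}$ (tail for $L_0$). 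Case $(iii)$ I obtain mechanically from $(ii)$ by Fact~\ref{fact:dual}: applying the dual map to the antichains of $(ii)$ and invoking $T_0^d=T_1$, $T_c^d=T_c$, $L_0^d=L_1$, $M_0^d=M_1$, $U_2^d=W_2$ converts an antichain in $T_0\setminus\mathcal{C}$ into one in $T_1\setminus\mathcal{C}^d$, which is exactly what each endpoint in $(iii)$ demands. Finally, $(iv)$ is settled in a single stroke by $(g_n)$, which lies in $T_c\setminus\mathcal{C}$ for every $\mathcal{C}\in\{M_c,S_c,T_cU_2,T_cW_2\}$. In each instance Theorem~\ref{theorem:characterization} delivers uncountability.

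The bulk of the work is thus routine membership bookkeeping, and I expect only two real pitfalls. The first is the coincidence $f_4=x_1\oplus x_2\oplus x_3\oplus x_4\in L$, which forces the switch to $n\geq 5$ precisely and only for the linear endpoints; ignoring it would wrongly certify a linear function as an element of $\Omega\setminus L$. The second is handling the rank-$2$ separation conditions correctly --- one must genuinely verify that the exhibited pairs of pre-image vectors fail to be $a$-separating --- and keeping the duality dictionary straight, in particular that the dual map interchanges $U_2$ and $W_2$ (rather than fixing either), so that case $(iii)$ is truly the mirror image of $(ii)$.
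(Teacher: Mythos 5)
Your proposal is correct and follows essentially the same route as the paper: it uses the antichain $(f_n)_{n\geq 4}$ of Lemma~\ref{lemma1} (with the tail $n\geq 5$ for the linear endpoints, since $f_4$ is parity) together with Fact~\ref{fact:dual} and Theorem~\ref{theorem:characterization} for cases $(i)$--$(iii)$, and the antichain $(g_n)_{n\geq 4}$ of Lemma~\ref{lemma2} for case $(iv)$. Your membership verifications simply make explicit the bookkeeping the paper leaves to the reader.
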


\begin{proof} Note that every member of $(f_n)_{n\geq 4}$ defined in Lemma~\ref{lemma1} 
belongs to $T_0 \setminus T_1\cup T_c\cup U_2\cup S\cup M$. Moreover, if $n\geq 5$, then $f_n\not\in L$.
Thus, using Fact~\ref{fact:dual} and applying Theorem~\ref{theorem:characterization}, we conclude that
$(i)$, $(ii)$ and $(iii)$ of the proposition hold. 
The proof of $(iv)$ follows similarly by observing that every member of $(g_n)_{n\geq 4}$ defined
 in Lemma~\ref{lemma2} 
belongs to $T_c \setminus (M_c\cup S_c\cup T_cU_2\cup T_cW_2)$. 
\end{proof}

\begin{proposition}\label{proposition2}
Each of the minimal intervals 
\begin{itemize}
\item[$(i)$] $\interval{ \mathcal{C}_1\cap \mathcal{C}_2}{\mathcal{C}_2 }$ 
where $\mathcal{C}_1\in \{T_c,M\}$ and 
$\mathcal{C}_2\in \{U_m:m\geq 2\} \cup \{U_\infty \}$,
\item[$(ii)$] $\interval{ \mathcal{C}_1\cap \mathcal{C}_2}{\mathcal{C}_2 }$ 
where $\mathcal{C}_1\in \{T_c,M\}$ and 
$\mathcal{C}_2\in \{W_m:m\geq 2\} \cup \{W_\infty \}$,
\item[$(iii)$] $\interval{ M_c\cap \mathcal{C}_2}{\mathcal{C}_2 }$ 
where $\mathcal{C}_2\in \{T_cU_m:m\geq 2\} \cup \{T_cU_\infty \}$,
\item[$(iv)$] $\interval{ M_c\cap \mathcal{C}_2}{\mathcal{C}_2 }$ 
where $\mathcal{C}_2\in \{T_cW_m:m\geq 2\} \cup \{T_cW_\infty \}$,
\end{itemize}
is uncountable.
\end{proposition}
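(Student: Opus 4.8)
The plan is to reduce every case to Theorem~\ref{theorem:characterization}: a closed interval $\interval{\mathcal{C}_1}{\mathcal{C}_2}$ is uncountable as soon as $\mathcal{C}_2\setminus\mathcal{C}_1$ contains an infinite antichain with respect to $\preceq_{\bf V}$. Since each interval here has the form $\interval{\mathcal{D}\cap\mathcal{C}_2}{\mathcal{C}_2}$, the relevant difference is $\mathcal{C}_2\setminus(\mathcal{D}\cap\mathcal{C}_2)=\mathcal{C}_2\setminus\mathcal{D}$, so for each family I only need to exhibit an infinite antichain lying in $\mathcal{C}_2$ but outside $\mathcal{D}$. I claim the families $(u_n)_{n\geq 4}$ and $(t^u_n)_{n\geq 4}$ of Lemma~\ref{lemma3}, together with their duals furnished by Fact~\ref{fact:dual}, do the job. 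The organizing observation is that conjoining the fresh variable $x_0$ forces every tuple in the preimage of $1$ to have first coordinate $1$; this single common $1$-coordinate places $u_n$ and $t^u_n$ automatically in $U_\infty=\bigcap_{k\geq 2}U_k$, while the inner functions $f_n$ and $g_n$ carry the failure of monotonicity and (for $f_n$) the failure of $1$-preservation.

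For case $(i)$ I would argue as follows. Every tuple in $u_n^{-1}(1)$ has first coordinate equal to $1$, so $u_n^{-1}(1)$ is $1$-separating and hence $u_n\in U_\infty\subseteq U_m$ for every $m\geq 2$. On the other hand $u_n(\mathbf 1)=f_n(\mathbf 1)=0$ for $n\geq 4$, so $u_n\notin T_1$ and therefore $u_n\notin T_c$; and since $f_n$ equals $1$ on a tuple with exactly one $1$ yet equals $0$ on a tuple with exactly two $1$'s above it, the function $u_n=x_0\wedge f_n$ is not monotone, so $u_n\notin M$. By Lemma~\ref{lemma3} the $(u_n)_{n\geq 4}$ form an antichain, and it lies in $U_m\setminus T_c$ and in $U_m\setminus M$ for every $m\in\{2,3,\ldots,\infty\}$; Theorem~\ref{theorem:characterization} then gives uncountability of all intervals in $(i)$. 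For case $(iii)$, where the ambient class is $T_cU_m=T_c\cap U_m$, the same coordinate argument puts $t^u_n\in U_\infty\subseteq U_m$. As $g_n$ is constant-preserving we have $t^u_n(\mathbf 0)=0$ and $t^u_n(\mathbf 1)=g_n(\mathbf 1)=1$, so $t^u_n\in T_c$ and hence $t^u_n\in T_cU_m$; but $g_n$ equals $1$ on a tuple with exactly two $0$'s and $0$ on a tuple with exactly one $0$ above it, so $t^u_n$ is not monotone and a fortiori $t^u_n\notin M_c$. Thus $(t^u_n)_{n\geq 4}$ is an antichain in $T_cU_m\setminus M_c$ for every $m\in\{2,3,\ldots,\infty\}$, and Theorem~\ref{theorem:characterization} settles $(iii)$.

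Cases $(ii)$ and $(iv)$ then follow from $(i)$ and $(iii)$ by duality, with no new computation. Since $U_m^d=W_m$ while $T_c^d=T_c$, $M^d=M$ and $M_c^d=M_c$, the automorphism $\mathcal{K}\mapsto\mathcal{K}^d$ carries the interval of $(i)$ onto that of $(ii)$ and the interval of $(iii)$ onto that of $(iv)$. Applying Fact~\ref{fact:dual} to the antichains already produced turns $(u_n)$ and $(t^u_n)$ into antichains $(u_n^d)_{n\geq 4}$ and $((t^u_n)^d)_{n\geq 4}$ lying respectively in $W_m\setminus T_c$, in $W_m\setminus M$, and in $T_cW_m\setminus M_c$, so Theorem~\ref{theorem:characterization} again applies.

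The incomparability of the antichain members is already granted by Lemma~\ref{lemma3}, and the passage from $U$-classes to $W$-classes is purely formal through the dual automorphism, so I expect the only genuine work to be the membership bookkeeping of the second paragraph: verifying simultaneously that each $u_n$ (resp. $t^u_n$) sits inside the correct separating class $U_m$ or $U_\infty$ and outside $T_c$, $M$, or $M_c$. That bookkeeping is elementary once one exploits the forced common $1$-coordinate created by $x_0$, which is the key device making all four families land in the prescribed closed intervals.
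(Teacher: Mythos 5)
Your proposal is correct and takes essentially the same route as the paper: the paper likewise invokes the antichains $(u_n)_{n\geq 4}$ and $(t^u_n)_{n\geq 4}$ of Lemma~\ref{lemma3} together with Theorem~\ref{theorem:characterization} for $(i)$ and $(iii)$, and Fact~\ref{fact:dual} for $(ii)$ and $(iv)$. The only difference is bookkeeping --- the paper records $u_n\in U_\infty\setminus T_cU_2$ and $t^u_n\in T_cU_\infty\setminus MU_2$ (so $(t^u_n)$ covers the $\mathcal{C}_1=M$ subcase of $(i)$), whereas you verify $u_n\notin M$ directly and let $(u_n)$ cover both subcases; both versions are sound.
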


\begin{proof} Observe that, for every $n\geq 4$, $u_n\in U_\infty \setminus T_cU_2$, 
and $t^u_n\in T_cU_\infty \setminus MU_2$. Thus it follows from Lemma~\ref{lemma3} and   
Theorem~\ref{theorem:characterization} that $(i)$ and $(iii)$ hold. The proof of $(ii)$ and $(iv)$ 
follows similarly by making use of Fact~\ref{fact:dual}.  
\end{proof}
  
\begin{proposition}\label{proposition3}
Each of the minimal intervals 
\begin{itemize}
\item[$(i)$] $\interval{ M\cap \mathcal{C}}{M}$ 
where $\mathcal{C}\in \{\Lambda ,V\}$,
\item[$(ii)$] $\interval{ MU_2}{M_0}$ and $\interval{ MW_2}{M_1}$, 
\item[$(iii)$] $\interval{ M_c\cap \mathcal{C}}{M_c}$ 
where $\mathcal{C}\in \{U_2,W_2\}$,
\item[$(iv)$] $\interval{ SM}{\mathcal{C}}$ 
where $\mathcal{C}\in \{M_cU_2,M_cW_2\}$,
\end{itemize}
is uncountable.
\end{proposition}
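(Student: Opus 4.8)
The plan is to derive all four items from Theorem~\ref{theorem:characterization}: for each interval $\interval{\mathcal{C}_1}{\mathcal{C}_2}$ it suffices to exhibit an infinite antichain (with respect to $\preceq_{\bf V}$) contained in $\mathcal{C}_2\setminus\mathcal{C}_1$. For the ``$W$'' and one of the ``$V$'' halves of each item I would construct the antichain only for one member of a dual pair and transport it by Fact~\ref{fact:dual}, using that $M^d=M$, $M_c^d=M_c$, $T_0^d=T_1$, $U_m^d=W_m$, $\Lambda^d=V$, and $S^d=S$ (so that $SM^d=SM$ and $(M_cU_2)^d=M_cW_2$).

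For items $(i)$, $(ii)$ and $(iii)$ the family $(H_n)_{n\geq 2}$ of Lemma~\ref{lemma5} does all the work. Each $H_n$ is monotone and constant-preserving, so $H_n\in M_c\subseteq M_0\cap M$. I would first record that for $n\geq 3$ the minimal true points of $H_n$ are exactly the $2$-element edges $\{i,j\}$, so $H_n$ is neither a single conjunction (nor a constant) nor a single disjunction; hence $H_n\notin\Lambda$ and $H_n\notin V$, placing the infinite antichain $(H_n)_{n\geq 3}$ in $M\setminus\Lambda$ and in $M\setminus V$ and settling $(i)$. Next I would observe that for $n\geq 4$ the tuples $(1,1,0,\dots,0)$ and $(0,0,1,1,0,\dots,0)$ both lie in $H_n^{-1}(1)$ yet share no coordinate of value $1$, so $H_n\notin U_2$. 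Thus $(H_n)_{n\geq 4}$ is an infinite antichain contained in $M_0\setminus MU_2$ and in $M_c\setminus M_cU_2$, which gives $(ii)$ and $(iii)$; the companion statements for $M_1$, $MW_2$, and $M_cW_2$ then follow by applying Fact~\ref{fact:dual} to $(H_n^d)$.

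The delicate item is $(iv)$, where the endpoints force the antichain into $(M_c\cap U_2)\setminus SM$: the functions must be $1$-separating of rank $2$ \emph{and} fail to be self-dual, so $(H_n)$ is useless here (it is not $1$-separating). Here I would use the composites $G^2_m=H_2(x_1,H_m(x_2,\dots,x_{m+1}))=x_1\wedge H_m(x_2,\dots,x_{m+1})$, which by Lemma~\ref{lemma5} form an infinite antichain. Each $G^2_m$ is monotone and constant-preserving, and since $G^2_m({\bf a})=1$ forces the first coordinate of ${\bf a}$ to be $1$, any two true points share that coordinate; hence $G^2_m\in M_c\cap U_2=M_cU_2$ (indeed it lies in $U_\infty$). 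On the other hand $G^2_m$ is not self-dual: for ${\bf a}=(0,1,\dots,1)$ one has $G^2_m({\bf a})=0$ and $G^2_m(\bar{\bf a})=G^2_m(1,0,\dots,0)=1\wedge H_m({\bf 0})=0$, so $G^2_m({\bf a})+G^2_m(\bar{\bf a})\neq 1$ and $G^2_m\notin S\supseteq SM$. Thus $(G^2_m)_{m\geq 2}$ is an infinite antichain in $M_cU_2\setminus SM$, and dualising via Fact~\ref{fact:dual} (with $SM^d=SM$ and $(M_cU_2)^d=M_cW_2$) gives the antichain $((G^2_m)^d)_{m\geq 2}$ in $M_cW_2\setminus SM$, completing $(iv)$.

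The main obstacle is exactly item $(iv)$: one must produce monotone constant-preserving functions that are $1$-separating of rank $2$ yet not self-dual, while still being pairwise $\preceq_{\bf V}$-incomparable. The device that resolves this is the single dominating variable $x_1$ in $G^2_m$, which automatically guarantees membership in $U_2$ (all true points meet in coordinate $1$) and simultaneously breaks self-duality, while Lemma~\ref{lemma5} supplies the incomparability for free. Everything else (monotonicity, constant-preservation, and the $U_2$, $\Lambda$, $V$ memberships of the $H_n$) is a routine check.
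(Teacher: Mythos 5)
Your proposal is correct, and for items $(i)$--$(iii)$ it coincides with the paper's proof: both place the antichain $(H_n)$ of Lemma~\ref{lemma5} inside $M_c\setminus(U_2\cup\Lambda\cup V)$ and finish with Fact~\ref{fact:dual} and Theorem~\ref{theorem:characterization}. The two arguments part ways only at item $(iv)$. The paper reuses the same family there: $(H_n)_{n\geq 4}$ lies in $M_cW_2\setminus U_2$ and $(H^d_n)_{n\geq 4}$ in $M_cU_2\setminus W_2$, which settles $\interval{SM}{M_cW_2}$ and $\interval{SM}{M_cU_2}$ because $SM\subseteq U_2\cap W_2$ (a monotone self-dual function cannot have two true points without a common $1$, nor two false points without a common $0$) --- a containment the paper leaves implicit. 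So your remark that $(H_n)$ is ``useless'' for $(iv)$ is slightly overstated: its dual family $(H^d_n)$ is $1$-separating and fails self-duality, which is exactly what is needed. Your substitute, the family $(G^2_m)=x_1\wedge H_m(x_2,\ldots,x_{m+1})$ from the second half of Lemma~\ref{lemma5}, is equally legitimate: the dominating variable $x_1$ gives membership in $M_cU_2$ (indeed in $U_\infty$) at a glance, and non-self-duality is an explicit two-point computation, so your version of $(iv)$ trades the implicit fact $SM\subseteq U_2\cap W_2$ for a second antichain family plus a direct check --- arguably a more self-contained treatment of that item. In all cases the three ingredients (Lemma~\ref{lemma5}, Fact~\ref{fact:dual}, Theorem~\ref{theorem:characterization}) are the same, and every verification you give ($H_n\notin\Lambda\cup V\cup U_2$ with the witnessing true points, $G^2_m\in M_cU_2\setminus S$, and the dual class identities used for transport) is accurate.
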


\begin{proof} Observe that for each $n\geq 4$, we have $H_n \in M_cW_2\setminus (U_2 \cup \Lambda  \cup V)$, 
and thus, by Lemma~\ref{lemma5} and Fact~\ref{fact:dual},
 $(H_n)_{n\geq 4}$ and $(H^d_n)_{n\geq 4}$ constitute infinite antichains in  
$M_cW_2\setminus (U_2 \cup \Lambda  \cup V)$ and $M_cU_2\setminus (W_2 \cup \Lambda  \cup V)$, respectively.
Hence, by Theorem~\ref{theorem:characterization} the proposition holds.
\end{proof}

\begin{proposition}\label{proposition4}
For $n\geq 2$, each of the minimal intervals 
\begin{itemize}
\item[$(i)$] $\interval{ \mathcal{C}\cap U_{n+1}}{\mathcal{C}\cap U_{n}}$ 
where $\mathcal{C}\in \{\Omega ,T_c,M,M_c\}$,
\item[$(ii)$] $\interval{ \mathcal{C}\cap W_{n+1}}{\mathcal{C}\cap W_{n}}$ 
where $\mathcal{C}\in \{\Omega ,T_c,M,M_c\}$,
\end{itemize}
is uncountable.
\end{proposition}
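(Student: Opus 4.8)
The plan is to deduce everything from Theorem~\ref{theorem:characterization}, and to produce for each fixed $n\ge 2$ a \emph{single} infinite antichain that settles all four clones $\mathcal{C}$ simultaneously. First note that $\mathcal{C}\cap U_{n+1}$ and $\mathcal{C}\cap U_n$ are clones with $\mathcal{C}\cap U_{n+1}\subseteq\mathcal{C}\cap U_n$ (since $U_{n+1}\subseteq U_n$), so each interval is a genuine closed interval and Theorem~\ref{theorem:characterization} applies: it makes $\interval{\mathcal{C}\cap U_{n+1}}{\mathcal{C}\cap U_n}$ uncountable as soon as $(\mathcal{C}\cap U_n)\setminus(\mathcal{C}\cap U_{n+1})=\mathcal{C}\cap(U_n\setminus U_{n+1})$ contains an infinite antichain. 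Because $M_c=M\cap T_c$ is contained in each of $\Omega,T_c,M,M_c$, any antichain lying in $M_c\cap(U_n\setminus U_{n+1})$ lies in $\mathcal{C}\cap(U_n\setminus U_{n+1})$ for every $\mathcal{C}\in\{\Omega,T_c,M,M_c\}$; so it suffices to treat $\mathcal{C}=M_c$ in part $(i)$. Part $(ii)$ will then follow by dualization: since $U_n^d=W_n$ and each of $\Omega,T_c,M,M_c$ is self-dual, Fact~\ref{fact:dual} converts an antichain in $M_c\cap(U_n\setminus U_{n+1})$ into one in $M_c\cap(W_n\setminus W_{n+1})$. Thus the whole statement reduces to producing, for each fixed $n\ge 2$, an infinite antichain of monotone constant-preserving functions that are $1$-separating of rank exactly $n$.

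For the construction I would graft an antichain of $U_\infty$-functions onto a near-top threshold. Let $H^d_{n+1}$ denote the $(n+1)$-ary threshold that returns $1$ iff at least $n$ of its $n+1$ arguments are $1$ (the dual of $H_{n+1}$ from Lemma~\ref{lemma5}), and recall the antichain $(G^2_m)_{m\ge 2}$ of Lemma~\ref{lemma5}, where $G^2_m=x_1\wedge H_m(x_2,\dots,x_{m+1})$. Since every minterm of $G^2_m$ contains $x_1$, we have $G^2_m\in U_\infty\subseteq U_n$, and clearly $G^2_m\in M_c$. On disjoint variable blocks I set $F_m=H^d_{n+1}(z_1,\dots,z_n,G^2_m)$. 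As a composite of monotone functions $F_m$ is monotone, and $F_m({\bf 0})=0$, $F_m({\bf 1})=1$, so $F_m\in M_c$. The minterms of $F_m$ are $\{z_1,\dots,z_n\}$ together with the sets $(\{z_1,\dots,z_n\}\setminus\{z_i\})\cup B$, where $B$ ranges over the minterms of $G^2_m$; each of the latter omits exactly one $z_i$, and no absorption occurs. Consequently any $n$ minterms of $F_m$ either leave some $z_j$ uncovered, and then share $z_j$, or comprise one omit-$z_i$ minterm for each $i=1,\dots,n$, in which case they share the variable $x_1$ common to all minterms of $G^2_m$; hence $F_m\in U_n$. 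On the other hand the $n+1$ minterms $\{z_1,\dots,z_n\}$ and $(\{z_1,\dots,z_n\}\setminus\{z_i\})\cup B_i$ for $i=1,\dots,n$ have no common $1$-coordinate, so $F_m\notin U_{n+1}$. Therefore $F_m\in M_c\cap(U_n\setminus U_{n+1})$ for every $m\ge 2$.

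It remains to show that $(F_m)_{m\ge 2}$ is an antichain, and this is where the main difficulty lies. Since all $F_m$ are monotone, I would work through Lemma~\ref{lemma4}: writing $\Gamma_m$ for the hypergraph associated with $F_m$, it is enough to rule out hyperedge-surjective homomorphisms $\Gamma_{m_1}\to\Gamma_{m_2}$ whenever $m_1\neq m_2$. The hypergraph $\Gamma_m$ carries a single smallest hyperedge $\{z_1,\dots,z_n\}$ of size $n$ together with a family of larger hyperedges encoding the $G^2_m$-part, whose number grows with $m$. Mirroring the size-and-count bookkeeping in the proof of Lemma~\ref{lemma5}, a hyperedge-surjective homomorphism would have to preserve the threshold core $\{z_1,\dots,z_n\}$ and hence restrict to a hyperedge-surjective homomorphism between the $G^2$-parts, contradicting the incomparability of $G^2_{m_1}$ and $G^2_{m_2}$. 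The delicate point, and thus the expected obstacle, is exactly this separation of the fixed core from the variable part: one must verify that no homomorphism can collapse or mix the core vertices $z_i$ with the $G^2$-vertices so as to forge a spurious hyperedge, which calls for the same careful case analysis on hyperedge sizes and preimages as in Lemma~\ref{lemma5}. Once $(F_m)_{m\ge 2}$ is seen to be an antichain, part $(i)$ follows from Theorem~\ref{theorem:characterization}, and part $(ii)$ follows by applying Fact~\ref{fact:dual} to this antichain as described above.
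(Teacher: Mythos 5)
Your reduction to $\mathcal{C}=M_c$, the duality step via Fact~\ref{fact:dual}, and the membership verification are all correct: $F_m=H^d_{n+1}(z_1,\ldots,z_n,G^2_m)$ does lie in $M_c\cap(U_n\setminus U_{n+1})$, by exactly the minterm analysis you give. The genuine gap is the antichain property of $(F_m)_{m\geq 2}$, which is where all the difficulty sits and which you do not prove: you assert that a hyperedge-surjective homomorphism ``would have to preserve the threshold core'' and then explicitly defer the verification (``the delicate point\ldots calls for the same careful case analysis\ldots as in Lemma~\ref{lemma5}''). Without that step, Theorem~\ref{theorem:characterization} cannot be invoked and the proof is incomplete. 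For the record, the missing step is fillable, and more cheaply than you anticipate: if $h\colon\Gamma_{m_2}\to\Gamma_{m_1}$ is a hyperedge-surjective homomorphism, then $h$ maps the size-$n$ core $Z^{(2)}$ onto the unique size-$n$ hyperedge $Z^{(1)}$ (images cannot increase in size, and all other hyperedges have size $n+2$); applying hyperedge-surjectivity to the hyperedge $Z^{(1)}$ itself yields a hyperedge $I$ of $\Gamma_{m_2}$ with $I=h^{-1}(Z^{(1)})\supseteq Z^{(2)}$, and since the only hyperedge of $\Gamma_{m_2}$ containing $Z^{(2)}$ is $Z^{(2)}$, we get $h^{-1}(Z^{(1)})=Z^{(2)}$, i.e.\ no $G^2$-vertex maps into the core. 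From this separation, $h$ restricts to a hyperedge-surjective homomorphism between the $G^2$-parts, contradicting Lemma~\ref{lemma5}.

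This self-created difficulty is precisely what the paper's proof avoids. The paper builds no new family: it takes $(G^{n+1}_m)_{m\geq n+1}$, whose pairwise incomparability is already part of Lemma~\ref{lemma5}, and only checks membership. Indeed $G^{n+1}_m\in M_c\cap W_n$ because $M_c$ and $W_n$ are clones containing both $H_{n+1}$ and $H_m$ (for $m\geq n+1$), and $G^{n+1}_m\notin W_{n+1}$ because identifying the inner variables $x_{n+1},\ldots,x_{m+n}$ collapses $G^{n+1}_m$ to $H_{n+1}\notin W_{n+1}$, while $W_{n+1}$ is closed under such identifications. This gives $(ii)$ directly, and $(i)$ by duality --- no fresh incomparability argument at all. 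So either carry out the separation argument above to complete your version, or switch to an antichain whose incomparability is already on record.
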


\begin{proof} It is not difficult to verify that 
for each $n\geq 2$, $H_{n+1}\in M_cW_{n}\setminus W_{n+1}$ and 
thus, by Lemma~\ref{lemma5}, 
$(H_m)_{m\geq n+1}$ constitutes an infinite antichain in 
$M_cW_{n}\setminus W_\infty $.  
Furthermore, for each $n\geq 2$, the family $(G^{n+1}_m)_{m\geq n+1}$
is in $M_cW_{n}$ but for every $m\geq n+1$, $G^{n+1}_m\not\in W_{n+1} $, otherwise by identifying the variables $x_{n+1},\ldots ,x_{m+n}$
of $G^{n+1}_m$ we would conclude that $H_{n+1}\in W_{n+1} $ which is a contradiction.
By Lemma~\ref{lemma5}, for $n\geq 2$, $(G^{n+1}_m)_{m\geq n+1}$ constitutes an infinite antichain 
in $M_cW_{n}\setminus W_{n+1}$. By this fact, we have that $(ii)$ holds.
The proof $(i)$ of the proposition follows now by making use of Fact~\ref{fact:dual} and applying Theorem~\ref{theorem:characterization}. 
\end{proof}
  
\begin{proposition}\label{proposition5}
Each of the minimal intervals 
\begin{itemize}
\item[$(i)$] $\interval{\mathcal{C}\cap \Lambda }{\mathcal{C}\cap U_\infty }$ 
where $\mathcal{C}\in \{M_0,M_c\}$,
\item[$(ii)$] $\interval{\mathcal{C}\cap V }{\mathcal{C}\cap W_\infty }$ 
where $\mathcal{C}\in \{M_1,M_c\}$,
\end{itemize}
is uncountable.
\end{proposition}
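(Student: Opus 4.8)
The plan is to apply Theorem~\ref{theorem:characterization}: the interval $\interval{\mathcal{C}_1}{\mathcal{C}_2}$ is uncountable as soon as $\mathcal{C}_2\setminus\mathcal{C}_1$ contains an infinite antichain with respect to $\preceq_{\bf V}$. Since each class under consideration contains $\mathcal{C}_1$, exhibiting such an antichain amounts to finding infinitely many pairwise incomparable functions lying in $\mathcal{C}_2$ but outside the relevant ``small'' clone ($\Lambda$ in part $(i)$, $V$ in part $(ii)$). The natural candidate is already available: the family $(G^2_m)_{m\geq 2}$ from Lemma~\ref{lemma5}, obtained by setting $n=2$, namely $G^2_m = x_1\wedge H_m(x_2,\ldots ,x_{m+1})$.

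First I would establish the membership $G^2_m\in M_c\cap U_\infty$. Monotonicity is clear, as $G^2_m$ is a composite of monotone functions, and $G^2_m({\bf 0})=0$, $G^2_m({\bf 1})=1$ give $G^2_m\in T_c$, hence $G^2_m\in M_c\subseteq M_0$. The essential point --- and what I expect to be the only step requiring a genuine argument --- is that $G^2_m\in U_\infty$. Here I would use the description of $U_\infty$ as the class of $1$-separating functions: every true point of $G^2_m$ has $x_1=1$, so the index $1$ witnesses that $(G^2_m)^{-1}(1)$ is $1$-separating, and the same witness serves every subset, so $G^2_m$ is $1$-separating of every rank and thus lies in $U_\infty=\bigcap_{k\geq 2}U_k$. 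It is precisely the choice $n=2$ that forces a common $1$-coordinate; for $n\geq 3$ the outer $H_n$ would contribute disjoint $2$-element minterms and destroy membership in $U_\infty$.

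Next I would check that $G^2_m\notin\Lambda$ for $m\geq 3$: the minimal true points of $G^2_m$ correspond to the hyperedges $\{1,k,l\}$ with $2\leq k<l\leq m+1$, of which there are several once $m\geq 3$, whereas a conjunction has a unique minimal true point. Combining these facts with Lemma~\ref{lemma5}, which guarantees that $(G^2_m)_{m\geq 2}$ is an antichain, the subfamily $(G^2_m)_{m\geq 3}$ is an infinite antichain contained in $(M_c\cap U_\infty)\setminus\Lambda$, hence in $(\mathcal{C}\cap U_\infty)\setminus(\mathcal{C}\cap\Lambda)$ for both $\mathcal{C}=M_0$ and $\mathcal{C}=M_c$ (using $M_c\subseteq M_0$). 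Theorem~\ref{theorem:characterization} then yields that the intervals in $(i)$ are uncountable.

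Finally, part $(ii)$ would follow by duality rather than by a new construction. Since $\Lambda^d=V$, $U_\infty^d=W_\infty$, $M_0^d=M_1$ and $M_c^d=M_c$, Fact~\ref{fact:dual} turns the antichain $(G^2_m)_{m\geq 3}$ in $(\mathcal{C}\cap U_\infty)\setminus(\mathcal{C}\cap\Lambda)$ into an antichain $((G^2_m)^d)_{m\geq 3}$ in $(\mathcal{C}^d\cap W_\infty)\setminus(\mathcal{C}^d\cap V)$. Applying this with $\mathcal{C}=M_0$ and $\mathcal{C}=M_c$ gives infinite antichains in the sets relevant to $(ii)$ for $\mathcal{C}\in\{M_1,M_c\}$, and a second appeal to Theorem~\ref{theorem:characterization} completes the argument.
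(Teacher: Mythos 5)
Your proof is correct and is essentially the paper's own argument: the same family $(G^2_m)_{m\geq 2}$ from Lemma~\ref{lemma5}, the same appeal to Fact~\ref{fact:dual}, and the same application of Theorem~\ref{theorem:characterization}. The one difference is orientation, and there your version is actually the sound one. The paper claims that each member of $(G^2_m)_{m\geq 2}$ lies in $M_cW_\infty\setminus V$, thereby settling part (ii) directly and getting (i) by duality; but as you observe, $G^2_m=x_1\wedge H_m(x_2,\ldots ,x_{m+1})$ has all of its true points in the half-cube $x_1=1$, so it lies in $U_\infty$, and it is not even in $W_2$ (its false points $(1,1,0,\ldots ,0)$ and $(0,0,1,\ldots ,1)$ share no zero coordinate), hence not in $W_\infty$. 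So the paper's displayed membership claim has $U_\infty$ and $W_\infty$ (equivalently, the roles of (i) and (ii)) interchanged, and your proof---(i) directly, (ii) by dualizing via $\Lambda^d=V$, $U_\infty^d=W_\infty$, $M_0^d=M_1$, $M_c^d=M_c$---is the corrected form of it. You are also right to start the antichain at $m\geq 3$: the index $m=2$ gives $G^2_2=x_1\wedge x_2\wedge x_3\in\Lambda$ (dually, $x_1\vee x_2\vee x_3\in V$), which must be discarded, a detail the paper glosses over by indexing from $m\geq 2$.
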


\begin{proof} Observe that each member of $(G^2_m)_{m\geq 2}$ is in $M_cW_\infty \setminus V $, 
and thus, by Lemma~\ref{lemma5},
 $(G^2_m)_{m\geq 2}$ constitutes an infinite antichain in  
$M_cW_\infty \setminus  V $.
The proof of the proposition follows now by making use of Fact~\ref{fact:dual}
 and applying Theorem~\ref{theorem:characterization}. 
\end{proof}

\begin{proposition}\label{proposition6}
Each of the minimal intervals 
\begin{itemize}
\item[$(i)$] $\interval{I_c}{SM}$,
\item[$(ii)$] $\interval{\mathcal{C}\cap S_c }{S_c}$ 
where $\mathcal{C}\in \{M,L\}$,
\item[$(iii)$] $\interval{ \mathcal{C}\cap S }{S}$ 
where $\mathcal{C}\in \{T_c,L\}$,
\end{itemize}
is uncountable.
\end{proposition}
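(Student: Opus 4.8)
The plan is to prove all three parts uniformly through Theorem~\ref{theorem:characterization}, which reduces each uncountability claim to exhibiting an infinite antichain inside the relevant relative complement $\mathcal{C}_2\setminus\mathcal{C}_1$. Every antichain I need is already available: the two families $(T_n)_{n\in\mathbb{O}}$ and $(s_n)_{n\in\mathbb{O}}$ of Lemma~\ref{lemma7}, together with the complemented family $(\overline{T_n})_{n\in\mathbb{O}}$. So the real work is not in the antichain property (which is Lemma~\ref{lemma7}, respectively Fact~\ref{fact:dual}) but in pinning down precisely which clones each family sits in and which it avoids.

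First I would record the membership properties of $T_n$. Since $n$ is odd, the threshold function $\mu_n$ is the majority function and hence self-dual, while $H_n$ and $H^d_n$ are dual to one another; combined with the symmetry between the cases $x_{n+1}=x_{n+2}=1$ and $x_{n+1}=x_{n+2}=0$ under complementing all arguments, this yields that each $T_n$ is self-dual. Evaluating at $\mathbf{0}$ gives $T_n(\mathbf{0})=H^d_n(\mathbf{0})=0$, so $T_n$ is constant-preserving and thus $T_n\in S_c$. Monotonicity of $T_n$ reduces to the pointwise chain $H^d_n\le\mu_n\le H_n$: the first inequality holds because $H^d_n=1$ forces at least $n-1$ ones and $n-1\ge\frac{n+1}{2}$ for $n\ge 7$, and the second because $\mu_n=1$ forces at least $\frac{n+1}{2}\ge 2$ ones, while two ones already make $H_n=1$. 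Hence $T_n\in SM$. Finally $T_n$ is non-linear, since specializing $x_{n+1}=x_{n+2}=1$ produces the non-linear function $H_n$, and a specialization of a linear function is linear.

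Next I would treat $s_n$ and $\overline{T_n}$. As $s_n$ is obtained from $T_n$ by negating its last two variables, an operation preserving self-duality, we get $s_n\in S$; and $s_n(\mathbf{0})=H_n(\mathbf{0})=0$ gives $s_n\in S_c$. In contrast to $T_n$, the function $s_n$ is \emph{not} monotone: fixing $x_{n+2}=0$ and raising $x_{n+1}$ from $0$ to $1$ switches the active case from $H_n$ to $\mu_n$, and since $\mu_n\le H_n$ with strict inequality at inputs having exactly two ones, the value drops. The same specialization argument shows $s_n$ is non-linear. For the complemented family, $\overline{T_n}$ is again self-dual (complementation preserves $S$), satisfies $\overline{T_n}(\mathbf{0})=1$ so that $\overline{T_n}\notin T_c$, and inherits non-linearity from $T_n$.

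With these facts the three parts close by direct appeal to Theorem~\ref{theorem:characterization}. For $(i)$ the antichain $(T_n)_{n\in\mathbb{O}}$ lies in $SM\setminus I_c$, since no $T_n$ is a projection. For $(ii)$ the antichain $(s_n)_{n\in\mathbb{O}}$ lies in $S_c\setminus(M\cup L)$, hence simultaneously in $S_c\setminus(M\cap S_c)$ and $S_c\setminus(L\cap S_c)$. For $(iii)$ the family $(\overline{T_n})_{n\in\mathbb{O}}$, which is an infinite antichain by Fact~\ref{fact:dual}, lies in $S\setminus(T_c\cup L)$, hence in both $S\setminus(T_c\cap S)$ and $S\setminus(L\cap S)$. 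I expect the main obstacle to be exactly the bookkeeping behind these membership claims, that is, verifying self-duality of $T_n$ from the interplay of $\mu_n$, $H_n$, $H^d_n$, and isolating the monotone/non-monotone dichotomy between $T_n$ and $s_n$ through the inequalities $H^d_n\le\mu_n\le H_n$; once these are established, the uncountability statements are an immediate invocation of Theorem~\ref{theorem:characterization}, with the passage to complements handled by Fact~\ref{fact:dual}.
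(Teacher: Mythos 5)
Your proof is correct and takes essentially the same route as the paper's: the same three antichains $(T_n)_{n\in\mathbb{O}}$, $(s_n)_{n\in\mathbb{O}}$ and $(\overline{T_n})_{n\in\mathbb{O}}$ from Lemma~\ref{lemma7} and Fact~\ref{fact:dual}, with the same membership claims ($T_n\in SM\setminus L$, $s_n\in S_c\setminus(M\cup L)$, $\overline{T_n}\in S\setminus(T_c\cup L)$), fed into Theorem~\ref{theorem:characterization}. The only differences are cosmetic: you justify monotonicity of $T_n$ explicitly via the pointwise chain $H_n^d\le\mu_n\le H_n$ (which the paper asserts without proof) and non-linearity by constant specialization to $H_n$, rather than the paper's observation that flipping $x_{n+1},x_{n+2}$ at the all-ones point leaves $T_n$ unchanged.
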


\begin{proof} To prove Proposition~\ref{proposition6} we shall make use of the antichains given in 
Lemma~\ref{lemma7}.
First, we show that the members of $(T_n)_{n\in \mathbb{O}}$ are in $SM\setminus L$.
Observe that if 
 \begin{displaymath} 
T_n(x_1,\ldots ,x_n,x_{n+1}, x_{n+2})=H_n(x_1,\ldots ,x_n), 
  \end{displaymath} 
then $x_{n+1}=x_{n+2}=1$, i.e. ${\bar x_{n+1}}={\bar x_{n+2}}=0$. Hence, 
\begin{displaymath}
 \begin{array}{llll} 
T^d_n(x_1,\ldots ,x_n,x_{n+1}, x_{n+2})={\bar H^d_n}({\bar x_1},\ldots ,{\bar x_n})=\\
H_n(x_1,\ldots ,x_n)=T_n(x_1,\ldots ,x_n,x_{n+1}, x_{n+2})
 \end{array}
\end{displaymath} 
For the case $T_n(x_1,\ldots ,x_n,x_{n+1}, x_{n+2})=\mu _n(x_1,\ldots ,x_n)$, we note that the identity  
$x_{n+1}+x_{n+2}={\bar x_{n+1}}+{\bar x_{n+2}}$ holds, and that $\mu _n$ is self-dual, and thus 
$T^d_n=T_n$ also holds. Hence, $(T_n)_{n\in \mathbb{O}}$ is a family of self-dual functions.
The fact that each function in $(T_n)_{n\in \mathbb{O}}$ is monotone follows immediatly from the definition
of each $T_n$.

To see that the members of $(T_n)_{n\in \mathbb{O}}$ are not linear, just note that
for each $n\in \mathbb{O}$, 
 \begin{displaymath} 
 \begin{array}{ll}  
T_n(1,\ldots ,1,x_{n+1}, x_{n+2})= T_n(1,\ldots ,1,x_{n+1}, {\bar x_{n+2}})=\\
T_n(1,\ldots ,1,{\bar x_{n+1}}, {\bar x_{n+2}}) \textrm{ for all $x_{n+1}, x_{n+2}\in \{0,1\} $,} 
 \end{array}
  \end{displaymath} 
and $T_n$ depends essentially on all variables.

Using Fact~\ref{fact:dual}, it follows from Lemma~\ref{lemma7} that $(T_n)_{n\in \mathbb{O}}$
and $({\bar T_n})_{n\in \mathbb{O}}$ are antichains in $SM\setminus L$ and $S\setminus (S_c\cup L)$, 
respectively. Thus, by Theorem~\ref{theorem:characterization} 
it follows that $(i)$ and $(iii)$ of the proposition hold. 

Now we show that the members of $(s_n)_{n\in \mathbb{O}}$ are in $S_c\setminus (SM\cup L)$. 
It is easy to verify that indeed, for each $n\in \mathbb{O}$, $s_n\in S_c\setminus L$.
To see that $s_n\in S_c\setminus M$, let $n\in \mathbb{O}$ and 
consider the $n+2$-tuples $ {\bf a} =(1,1,0\ldots ,0,0,0)$ and
${\bf b}=(1,1,0,\ldots ,0,1, 1)$. 
Obviously, ${\bf a}\preceq {\bf b}$ but $s_n({\bf a})> s_n({\bf b})$.
Thus, by Lemma~\ref{lemma7}, $(s_n)_{n\in \mathbb{O}}$ constitutes an infinite antichain in 
$S_c\setminus (SM\cup L)$, and hence, by Theorem~\ref{theorem:characterization} 
it follows that $(ii)$ also holds. 
\end{proof}
    
\subsection{Characterization of the closed
 intervals of ${\bf E}_{\mathbb{B}}$}
Using the classification of the closed intervals of ${\bf E}_{\mathbb{B}}$ given in the previous 
subsection, we derive the following characterization of
 the uncountable closed intervals of ${\bf E}_{\mathbb{B}}$: 

\begin{theorem}
\label{theorem:characterization2}
Let $\interval{\mathcal{C}_1}{\mathcal{C}_2}$ be a closed interval of ${\bf E}_{\mathbb{B}}$.
Then there are uncountably many equational classes in $\interval{\mathcal{C}_1}{\mathcal{C}_2}$ 
 if and only if $\mathcal{C}_2\setminus \mathcal{C}_1$ contains a non-associative Boolean function.    
\end{theorem}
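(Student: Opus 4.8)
The plan is to isolate a single structural fact about associative Boolean functions and then feed it into the classification of Subsection~4.3. Write $Q$ for the set of all quasi-associative Boolean functions. The heart of the matter is the identity
\[ Q=\Lambda \cup V\cup L, \]
so that, by definition, a Boolean function is non-associative precisely when it lies outside $\Lambda \cup V\cup L$. Granting this, the hypothesis ``$\mathcal{C}_2\setminus \mathcal{C}_1$ contains a non-associative function'' becomes ``$\mathcal{C}_2\setminus \mathcal{C}_1\not\subseteq \Lambda \cup V\cup L$'', and the theorem will follow by combining Theorem~\ref{theorem:characterization} with Theorem~\ref{theorem:classification}.

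I would first prove the key identity. The inclusion $\Lambda \cup V\cup L\subseteq Q$ is the easy half: each member of $\Lambda \cup V\cup L$ is obtained by simple variable substitution from its dummy-free reduct, which is a constant, a variable, a negated variable, an $n$-ary conjunction $\bigwedge_{i} x_i$, an $n$-ary disjunction $\bigvee_i x_i$, or the $n$-ary sum $x_1\oplus\cdots\oplus x_n$ or its complement; each of these reducts is associative (the quasi-monadic ones vacuously, since they have essential arity at most $1$), so each member is quasi-associative. The reverse inclusion $Q\subseteq \Lambda \cup V\cup L$ is the crux. Since $\Lambda$, $V$, and $L$ are closed under simple variable substitutions, it suffices to show that \emph{every associative Boolean function lies in $\Lambda \cup V\cup L$}. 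Associative functions of essential arity at most $1$ are variables, negated variables, or constants, hence in $\Lambda \cup V\cup L$; so by Proposition~\ref{quasi} I may assume essential arity $n\geq 2$ and, consequently, no dummy variables. The remaining step is to verify that such an $f$ must be the $n$-ary conjunction, disjunction, sum modulo $2$, or complement of the sum, and therefore lie in $\Lambda$, $V$, or $L$. I would do this by reducing the polyadic associativity identity to the binary case and invoking the classical list of associative binary Boolean operations ($\wedge$, $\vee$, $\oplus$, its complement, the two projections, and the two constants), then checking that each admissible binary reduct forces the corresponding $n$-ary normal form on $f$. This classification of the associative Boolean functions is where I expect the real work to lie, and it is the main obstacle of the proof.

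Granting $Q=\Lambda \cup V\cup L$, the forward implication runs as follows. Suppose $\interval{\mathcal{C}_1}{\mathcal{C}_2}$ is uncountable. By Theorem~\ref{theorem:characterization} there is an infinite antichain $A\subseteq \mathcal{C}_2\setminus \mathcal{C}_1$. If every member of $A$ were quasi-associative, then $A\subseteq \Lambda \cup V\cup L$, whence $A=(A\cap \Lambda)\cup(A\cap V)\cup(A\cap L)$ would be a union of three antichains contained respectively in $\Lambda$, $V$, and $L$. As every antichain in $\Lambda$, in $V$, or in $L$ is finite, $A$ would be finite, a contradiction. Hence some $g\in A\subseteq \mathcal{C}_2\setminus \mathcal{C}_1$ is non-associative.

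Conversely, suppose $\mathcal{C}_2\setminus \mathcal{C}_1$ contains a non-associative function $g$, so $g\notin Q=\Lambda \cup V\cup L$, and assume for contradiction that $\interval{\mathcal{C}_1}{\mathcal{C}_2}$ is countable. By Theorem~\ref{theorem:classification} one of its four conditions holds. In the first three cases $\mathcal{C}_2$ is contained in $V$, in $\Lambda$, or in $L$, so $\mathcal{C}_2\setminus \mathcal{C}_1\subseteq \Lambda \cup V\cup L$. In the fourth case $\mathcal{C}\cap M_c\subseteq \mathcal{C}_1$ and $\mathcal{C}_2\subseteq \mathcal{C}\cap M$ give $\mathcal{C}_2\setminus \mathcal{C}_1\subseteq (\mathcal{C}\cap M)\setminus(\mathcal{C}\cap M_c)\subseteq M\setminus M_c=\{{\bf 0},{\bf 1}\}I_c$, which consists of constant functions and is therefore contained in $\Lambda \cup V\cup L$. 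In all cases $g\in \mathcal{C}_2\setminus \mathcal{C}_1\subseteq \Lambda \cup V\cup L=Q$, contradicting that $g$ is non-associative. Thus the interval is uncountable, and the proof is complete.
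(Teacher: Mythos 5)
Your backward implication (non-associative function present $\Rightarrow$ uncountable) is correct and is essentially the paper's argument: it only uses the easy inclusion $\Lambda\cup V\cup L\subseteq Q$ (every conjunction, disjunction, linear function, and constant is quasi-associative) together with Theorem~\ref{theorem:classification} and the identity $M\setminus M_c=\{{\bf 0},{\bf 1}\}I_c$. The genuine gap is in your forward implication. It rests entirely on the reverse inclusion $Q\subseteq \Lambda\cup V\cup L$, i.e.\ on the classification asserting that every associative Boolean function of essential arity $n\geq 2$ is the $n$-ary conjunction, disjunction, sum, or complemented sum. You explicitly leave this unproven (``where I expect the real work to lie''), and the plan you sketch --- ``reducing the polyadic associativity identity to the binary case'' --- is not a routine step: polyadic associativity does not simply reduce to binary associativity (this is exactly the issue addressed by nontrivial reducibility theorems for $n$-ary semigroups, of Hossz\'u--Gluskin type). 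What does come easily is that the end-slices $h_e(x,y)=f(x,e,\ldots,e,y)$, $e\in\{0,1\}$, inherit binary associativity from $f$; but passing from ``each slice is one of the eight binary associative operations'' to ``$f$ itself is one of the four $n$-ary normal forms'' still requires a substantive case analysis that you neither carry out nor replace by a citation. Since your forward direction (uncountable $\Rightarrow$ infinite antichain $\Rightarrow$ some member outside $\Lambda\cup V\cup L$) has no other route to a non-associative function, the proof is incomplete at its central point.

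It is worth seeing how the paper avoids precisely this obstacle: it never classifies the associative functions. For the direction you left open, it instead shows that every uncountable closed interval contains a minimal uncountable one, and that for each such minimal interval $\interval{\mathcal{C}_1}{\mathcal{C}_2}$ the antichains of Subsection 4.2 (or their duals/complements) lie in $\mathcal{C}_2\setminus\mathcal{C}_1$; since those functions have no inessential variables, Proposition~\ref{quasi} reduces non-(quasi-)associativity to an explicit failure of the associativity identity, which the paper verifies by a direct computation for each family $(f_n)$, $(g_n)$, $(u_n)$, $(t^u_n)$, $(H_n)$, $(G^n_m)$, $(T_n)$, $(s_n)$. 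So you have two options: either supply a complete proof of the classification $Q=\Lambda\cup V\cup L$ (a true but genuinely laborious lemma, stronger than anything the paper needs), after which your argument is a cleaner and more conceptual proof than the paper's; or else replace your forward implication by the paper's strategy of exhibiting explicit non-associative witnesses inside each minimal uncountable interval.
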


\begin{proof} Let $\interval{\mathcal{C}_1}{\mathcal{C}_2}$ be a closed interval of ${\bf E}_{\mathbb{B}}$.
It is not difficult to verify that if $\interval{\mathcal{C}_1}{\mathcal{C}_2}$ satisfies one of 
the conditions of Theorem~\ref{theorem:classification}, then $\mathcal{C}_2\setminus \mathcal{C}_1$ 
contains only quasi-associative Boolean functions. 

To see that the converse holds, it is enough to provide a non-associative function in 
$\mathcal{C}_2\setminus \mathcal{C}_1$ 
for each uncountable minimal interval $\interval{\mathcal{C}_1}{\mathcal{C}_2}$.
 For that it is sufficient to show that the members of the
 antichains given in Subsection 4.2 are non-associative. Note that the members of each antichain $(F_n)_{n\in I}$
 given in Subsection 4.2 have no
inessential variables, and thus by Proposition~\ref{quasi}, for each antichain $(F_n)_{n\in I}$,
it is enough to show that for some $n\in I$, and some 
$1\leq i<j\leq m_n$, there is $(a_1,\ldots ,a_{2m_n-1})\in \mathbb{B}^{2m_n-1}$, such that  
 \begin{displaymath}
\begin{array}{l}
F_n(a_1,\ldots ,a_{i-1},F_n(a_i,\ldots ,a_{i+m_n-1}),a_{i+m_n},\ldots ,a_{ 2m_n-1})\not=
\\ F_n(a_1,\ldots ,a_{j-1},F_n(a_j,\ldots ,a_{j+m_n-1}),a_{j+m_n},\ldots ,a_{ 2m_n-1})
\end{array}
\end{displaymath}

Consider the antichain $(f_n)_{n\geq 4}$ defined in Lemma~\ref{lemma1}. 
Let $n>4$. To see that $f_n$ is non-associative, let $i=2$, $j=3$, and let 
$(a_1,\ldots ,a_{2n-1})\in \mathbb{B}^{2n-1}$ be defined by $a_t=0$ if and only if 
$t\in \{1,\ldots ,n-1,n+1\}$.
Then 
 \begin{displaymath}
\begin{array}{l}
f_n(a_1,f_n(a_2\ldots ,a_{n+1}),a_{n+2},\ldots ,a_{ 2n-1})=1\not=
\\ 0=f_n(a_1,a_2, f_n(a_3,\ldots ,a_{n+2}),a_{n+3},\ldots ,a_{ 2n-1})
\end{array}
\end{displaymath}

For the antichain $(g_n)_{n\geq 4}$ defined in Lemma~\ref{lemma2}, 
let $n>4$, $i=1$, $j=3$, and let 
$(a_1,\ldots ,a_{2n-1})\in \mathbb{B}^{2n-1}$ be defined by $a_t=1$ if and only if 
$t\in \{1,n+1,\ldots ,2n-1\}$. Then  
 \begin{displaymath}
\begin{array}{l}
g_n(g_n(a_1,\ldots ,a_{n}),a_{n+1},\ldots ,a_{ 2n-1})=1\not=
\\ 0=g_n(a_1,a_2,g_n(a_3,\ldots ,a_{n+2}),a_{n+3},\ldots ,a_{ 2n-1})
\end{array}
\end{displaymath}

For the antichain $(u_n)_{n\geq 4}$ defined in Lemma~\ref{lemma3}, 
let $n>4$, $i=1$, $j=2$, and let 
$(a_1,\ldots ,a_{2n+1})\in \mathbb{B}^{2n+1}$ be defined by $a_t=0$ if and only if 
$t=2$. Then  
 \begin{displaymath}
\begin{array}{l}
u_n(u_n(a_1,\ldots ,a_{n+1}),a_{n+2},\ldots ,a_{ 2n+1})=1\wedge 0=0\not=
\\ 1=1\wedge 1=u_n(a_1,u_n(a_2,\ldots ,a_{n+2}),a_{n+3},\ldots ,a_{ 2n+1})
\end{array}
\end{displaymath}
For the antichain $(t^u_n)_{n\geq 4}$ also defined in Lemma~\ref{lemma3}, 
let $n>4$, $i=1$, $j=2$, and let 
$(a_1,\ldots ,a_{2n+1})\in \mathbb{B}^{2n+1}$ be defined by $a_t=0$ if and only if 
$2\leq t\leq n-1$. Then  
 \begin{displaymath}
\begin{array}{l}
t^u_n(t^u_n(a_1,\ldots ,a_{n+1}),a_{n+2},\ldots ,a_{ 2n+1})=1\wedge 1=1\not=
\\ 0=1\wedge 0=t^u_n(a_1,t^u_n(a_2,\ldots ,a_{n+2}),a_{n+3},\ldots ,a_{ 2n+1})
\end{array}
\end{displaymath}

For the antichain $(H_n)_{n\geq 4}$ defined in Lemma~\ref{lemma5}, 
let $n>4$, $i=1$, $j=2$, and let 
$(a_1,\ldots ,a_{2n-1})\in \mathbb{B}^{2n-1}$ be defined by $a_t=1$ if and only if 
$1\leq t\leq n$. Then  
 \begin{displaymath}
\begin{array}{l}
H_n(H_n(a_1,\ldots ,a_n),a_{n+1},\ldots ,a_{ 2n-1})=0\not=
\\ 1=H_n(a_1,H_n(a_2,\ldots ,a_{n+1}),a_{n+2},\ldots ,a_{ 2n-1})
\end{array}
\end{displaymath}
For the antichain $(G_m^n)_{m\geq n}$ also defined in Lemma~\ref{lemma5}, 
let $m\geq n\geq 4$, $i=2$, $j=3$, and let 
$(a_1,\ldots ,a_{2m+2n-3})\in \mathbb{B}^{2m+2n-3}$ be defined by $a_t=1$ if and only if 
$1\leq t\leq 2$. Then  
 \begin{displaymath}
\begin{array}{l}
G_m^n(a_1,G_m^n(a_2,\ldots ,a_{m+n}),a_{m+n+1},\ldots ,a_{2m+ 2n-3})=0\not=
\\ 1=G_m^n(a_1,a_2,G_m^n(a_3,\ldots ,a_{m+n+1}),a_{m+n+2},\ldots ,a_{2m+ 2n-3})
\end{array}
\end{displaymath}

For the antichain $(T_n)_{n\in \mathbb{O}}$ defined in Lemma~\ref{lemma7}, 
let $n\in \mathbb{O}$, $i=1$, $j=2$, and let 
$(a_1,\ldots ,a_{2n+3})\in \mathbb{B}^{2n+3}$ be defined by $a_t=1$ if and only if 
$t\in \{1,\ldots ,n+2,2n+2 ,2n+3\}$. Then  
 \begin{displaymath}
\begin{array}{l}
T_n(T_n(a_1,\ldots ,a_{n+2}),a_{n+3},\ldots ,a_{ 2n+3})=0\not=
\\ 1=T_n(a_1,T_n(a_2,\ldots ,a_{n+3}),a_{n+4},\ldots ,a_{ 2n+3})
\end{array}
\end{displaymath}
For the antichain $(s_n)_{n\in \mathbb{O}}$ also defined in Lemma~\ref{lemma7}, 
let $n\in \mathbb{O}$ , $i=1$, $j=2$, and let 
$(a_1,\ldots ,a_{2n+3})\in \mathbb{B}^{2n+3}$ be defined by $a_t=1$ if and only if 
$t\in \{1,\ldots ,n+2\}$. Then  
 \begin{displaymath}
\begin{array}{l}
s_n(s_n(a_1,\ldots ,a_{n+2}),a_{n+3},\ldots ,a_{ 2n+3})=0\not=
\\ 1=s_n(a_1,s_n(a_2,\ldots ,a_{n+3}),a_{n+4},\ldots ,a_{ 2n+3})
\end{array}
\end{displaymath}

Note that if a Boolean function $f$ is non-associative, then its dual is also non-associative.
Now, if $\interval{\mathcal{C}_1}{\mathcal{C}_2}$ is a minimal and uncountable closed interval, then 
$\mathcal{C}_2\setminus \mathcal{C}_1$ contains at least one of the functions above or the dual of 
one of the functions above, and thus it contains
a non-associative function. Since each uncountable closed interval must contain a minimal
 and uncountable closed interval, we conclude that if $\interval{\mathcal{C}_1}{\mathcal{C}_2}$ 
 is an uncountable closed interval, then $\mathcal{C}_2\setminus \mathcal{C}_1$ 
contains a non-associative Boolean function, and the proof of the theorem is complete.  
\end{proof}

\end{document}